\journal{Journal of Multivariate Analysis}
\numberwithin{equation}{section}
\theoremstyle{plain}
\newtheorem{theorem}{Theorem}
\newtheorem{lemma}{Lemma}
\newtheorem{proposition}{Proposition}
\theoremstyle{definition} 
\newtheorem{remark}{Remark}
\newtheorem{example}{Example}
\newcommand{\ignore}[1]{} 
\font\msbmx=msbm10                   
\font\msbmvii=msbm7                  
\font\msbmv=msbm5
\def\varnothing{\mathchoice{\mbox{\msbmx\char'077}}%
{\mbox{\msbmx\char'077}}{\mbox{\msbmvii\char'077}}{\mbox{\msbmv\char'077}}}%
\def\vect{\mathop{\rm Vect}\limits}
\def\tr{\mathop{\rm tr}\limits}
\newcommand{\vae}{\varepsilon}
\newcommand{\wt}{\widetilde}
\def\bbr{{\mathbb R}}
\newcommand{\mb}[1]{\mathbf{#1}} 
\newcommand{\Xb}{{\mb{X}}}
\newcommand{\mbs}[1]{\boldsymbol{#1}} 
\newcommand{\thetab}{{\mbs{\theta}}}
\newcommand{\mc}[1]{\mathcal{#1}} 
\newcommand{\Rc}{{\mc{R}}}
\newcommand{\Nc}{{\mc{N}}}
\newcommand{\Fc}{{\mc{F}}}
\newcommand{\Hc}{{\mc{H}}}
\newcommand{\cM}{{\mc{M}}}
\newcommand{\Mcc}{{\mc{M}}}
\newcommand{\Xc}{{\mc{X}}}
\newcommand{\Bc}{{\mc{B}}}
\def\text#1{\hbox{#1}}
\def\proof{{\noindent \bf Proof. }}
\def\endproof{\mbox{\ $\qed$}}
\def\E{{\bf E}}
\def\K{{\bf K}}
\def\P{{\bf P}}
\def\C{{\bf C}}
\def\D{{\bf D}}
\def\B{{\bf B}}
\def\A{{\bf A}}
\def\U{{\bf U}}
\def\H{{\bf H}}
\def\V{{\bf V}}
\def\c{{\bf c}}
\def\a{{\bf a}}
\def\e{{\bf e}}
\def\u{{\bf u}}
\def\q{{q}}
\def\d{\mathrm{d}}
\def\build #1_#2{\mathrel{\mathop{\kern 0pt #1}\limits_\zs{#2}}}
\newcommand{\zs}[1]{{\mathchoice{#1}{#1}{\lower.25ex\hbox{$\scriptstyle#1$}}
{\lower0.25ex\hbox{$\scriptscriptstyle#1$}}}}
\numberwithin{equation}{section}
\def\proof{{\noindent \bf Proof. }}
\def\endproof{\mbox{\ $\qed$}}
\newcommand{\Pb}{{\mathsf{P}}} 
\newcommand{\EV}{{\mathsf{E}}} 
\newcommand{\PFA}{\mathsf{PFA}}
\newcommand{\LCPFA}{\mathsf{LCPFA}}
\newcommand{\ADD}{{\mathsf{ADD}}} 
\newcommand{\Hyp}{{\mathsf{H}}} 
\newcommand{\mrm}[1]{\mathrm{#1}}
\newcommand{\F}{{\mrm{F}}}
\newcommand{\mbb}[1]{\mathbb{#1}} 
\def\One{\mathchoice{\rm 1\mskip-4.2mu l}{\rm 1\mskip-4.2mu l}
{\rm 1\mskip-4.6mu l}{\rm 1\mskip-5.2mu l}}
\newcommand\Ind[1]{{\One_{\{#1\}}}} 
\newcommand{\Zbb}{\mbb{Z}} 
\newcommand{\xra}{\xrightarrow} 
\newcommand{\abs}[1]{\left\vert#1\right\vert}
\newcommand{\set}[1]{\left\{#1\right\}}
\newcommand{\brc}[1]{\left(#1\right)}
\newcommand{\brcs}[1]{\left[#1\right]}
\renewcommand{\le}{\leqslant} 
\renewcommand{\ge}{\geqslant}
\begin{document}

\begin{frontmatter}

\title{Asymptotically optimal pointwise and minimax change-point detection for general stochastic models with a composite post-change hypothesis}


\author[A1]{Serguei Pergamenchtchikov}

\author[A2]{Alexander G. Tartakovsky\corref{mycorrespondingauthor}}

\address[A1]{Laboratoire de Math\'ematiques Rapha\"el Salem, 
UMR 6085 CNRS-Universit\'e de Rouen Normandie, France and
National Research Tomsk State University, 
International Laboratory of Statistics of Stochastic Processes and Quantitative Finance, Tomsk, Russia}

\address[A2]{Space Informatics Laboratory, Moscow Institute of Physics and Technology, Moscow, Russia
and AGT StatConsult, Los Angeles, California, USA}

\cortext[mycorrespondingauthor]{Corresponding author. Email address: \url{agt@phystech.edu}}

\begin{abstract}
A weighted Shiryaev-Roberts change detection procedure is shown to approximately minimize the expected delay to detection as well as higher moments of the detection delay among all 
change-point detection procedures with the given low maximal local probability of a false alarm within a window of a fixed length in pointwise and minimax settings for general non-i.i.d.\ data models 
and for the composite post-change hypothesis when the post-change parameter is unknown. 
We establish very general conditions for models under which  the weighted Shiryaev--Roberts procedure is asymptotically optimal. These conditions are formulated in terms of the rate of 
convergence in the strong law of large numbers for the log-likelihood ratios between the ``change'' and ``no-change'' hypotheses, and we also provide sufficient conditions for a  large 
class of ergodic Markov processes. Examples related to multivariate Markov models where these conditions hold are given. 
\end{abstract}

\begin{keyword}
Asymptotic optimality \sep Changepoint detection \sep Composite post-change hypothesis \sep  Quickest detection \sep  Weighted Shiryaev--Roberts procedure.
\MSC[2010] Primary 62L10; 62L15 \sep Secondary 60G40; 60J05; 60J20.
\end{keyword}

 
\end{frontmatter}

%
\section{Introduction and basic notation} \label{sec:intro}

A substantial part of the development of quickest (sequential) change-point detection has been  directed  towards establishing optimality and asymptotic optimality of certain detection procedures
such as CUSUM, Shiryaev, Shiryaev--Roberts, EWMA and their mutual comparison in various settings (Bayesian, minimax, etc.). See, e.g., 
\citep{BaronTartakovskySA06, Basseville&Nikiforov-book93, Fuh03, girshick-ams52, hawkins-book98, LaiIEEE98, lorden-ams71, masson-book01, MoustakidesAS86, MoustPolTarCS09, 
PergTarSISP2016, Polunchenkoetal-SLJAS2013, PollakTartakovsky-SS09,PolunTartakovskyAS10, ShiryaevTPA63, srivastava-as93, 
TartakovskyIEEEIT2017,TartakovskyIEEEIT2018,TNB_book2014, tartakovskypolpolunch-tpa11, TartakovskyVeerTVP05}. This article is concerned with the problem of minimizing the moments of the detection delay,
$\Rc^r_{\nu,\theta}(\tau)=  \EV_{\nu,\theta}\left[(\tau-\nu)^r\,|\,\tau> \nu \right]$, in pointwise (i.e., for all change points $\nu$) and minimax (i.e., for a worst change point) settings among all procedures
for which the probability of a false alarm $\Pb_{\infty}(k\le \tau < k+m | \tau\ge k)$ is fixed and small. Hereafter $\tau$ is a detection procedure (stopping time), $\nu$ is a point of change, and $\theta$ is a 
post-change parameter.

To be more specific, observations $X_1,X_2, \dots$ are random variables on a probability space $(\Omega, \Fc)$, which may change statistical properties at an unknown point in time 
$\nu\in\{0,1, 2, \dots\}$. For a fixed change point $\nu=k$ and a parameter $\theta\in \Theta$, there is a measure $\Pb_{k,\theta}$ on this space, which in the case of no change ($\nu=\infty$) 
 will be denoted by $\Pb_\infty$. Let  $\EV_{k,\theta}$ denote the expectation under $\Pb_{k,\theta}$ when $\nu=k<\infty$, and let ~$\EV_\infty$
denote the same when there is no change, i.e., $\nu=\infty$. We use the convention that $X_\nu$ is the {\em last pre-change} observation. 
Write $\Xb^{n}=(X_\zs{1},\dots,X_{n})$ for the concatenation of the first $n$ observations. Joint probability densities of $\Xb^n$ are taken with respect to a $\sigma$-finite measure and denoted by 
$p_{k,\theta}(\Xb^n)= p(\Xb^n|\nu=k, \theta)$  when the change point $\nu=k$ is fixed and finite (i.e., joint post-change density) and $p_\infty(\Xb^n)= p(\Xb^n|\nu=\infty)$ stands for the pre-change joint density 
(when the change never occurs). 
Let $\{\psi_{n}(X_\zs{n}|\Xb^{n-1})\}_{n\ge 1}$ and $\{f_{\theta,n}(X_\zs{n}|\Xb^{n-1})\}_{n\ge 1}$ be two sequences of conditional densities of $X_n$ 
given $\Xb^{n-1}$  with respect to some non-degenerate $\sigma$-finite measure. We are interested in the general non-i.i.d.\ case that
\begin{equation}\label{noniidmodel}
\begin{split}
p_{\nu,\theta}(\Xb^n) & = p_\infty(\Xb^n) = \prod_{i=1}^n \psi_i(X_i|\Xb^{i-1}) \quad \text{for}~~ \nu \ge n ,
\\
p_{\nu, \theta}(\Xb^n) & =  \prod_{i=1}^{\nu}  \psi_i(X_i|\Xb^{i-1}) \times \prod_{i=\nu+1}^{n}  f_{\theta,i}(X_i|\Xb^{i-1})  \quad \text{for}~~ \nu < n .
\end{split}
\end{equation}
In other words,  $\{\psi_{n}(X_{n}|\Xb^{n-1})\}_{n\ge 1}$ and $\{f_{\theta,n}(X_{n}|\Xb^{n-1})\}_{n\ge 1}$ are the conditional pre-change and 
post-change  densities, 
respectively, so that if the change occurs
at time $\nu=k$, then the conditional density of the $(k+1)$-th observation changes from $\psi_{k+1}(X_{k+1}|\Xb^{k})$ to $f_{\theta,k+1}(X_{k+1}|\Xb^{k})$. Note that the post-change densities may depend on 
the change point~$\nu$, i.e., $f_{\theta,n}(X_n|\Xb^{n-1})= f_{\theta,n}^{(\nu)}(X_n|\Xb^{n-1})$ for $n > \nu$. We omit the superscript $\nu$ for brevity. 

In many applications, the pre-change distribution is known, but the parameter $\theta$ of the post-change distribution is unknown. In this case,
 the post-change hypothesis ``$\Hyp_k^\vartheta: \nu=k, \theta=\vartheta$'', $\vartheta \in \Theta$ is composite. 

 Obviously, the general non-i.i.d. model given by \eqref{noniidmodel} implies that under the measure~$\Pb_\infty$  
the conditional density of~$X_n$ given $\Xb^{n-1}$
is $\psi_n(X_n|\Xb^{n-1})$ for all $n \ge 1$ and under~$\Pb_{k,\theta}$, for any $0\le k<\infty$, the conditional density of~$X_n$ is
$\psi_n(X_n|\Xb^{n-1})$  if $n \le k$ and is $f_{\theta,n}(X_n|\Xb^{n-1})$ if $n > k$. 

A sequential detection procedure is a stopping (Markov) time $\tau$ for an observed sequence $\{X_\zs{n}\}_{n\ge 1}$, i.e.,  $\tau$ is an
extended integer-valued random variable, such that the event $\{\tau \le n\}$ belongs to the $\sigma$-algebra $\Fc_\zs{n}=\sigma(X_\zs{1},\dots,X_\zs{n})$.  We denote by $\Mcc$ the set of all stopping times.
A false alarm is raised whenever the detection is declared before the change occurs, i.e.,  when $\tau\le \nu$. (Recall that $X_{\nu+1}$ is the first post-change observation.)
The goal of the quickest change-point detection problem is to develop a detection procedure that guarantees a stochastically small delay to detection 
$\tau-\nu$ provided that there is no false alarm (i.e., $\tau> \nu$) under a given (typically low) risk of false alarms.

The present paper extends the results of Pergamenchtchikov and Tartakovsky~\citep{PergTarSISP2016} to the case of the composite post-change hypothesis when the parameter $\theta$ is unknown. 
To this end, we need to develop crucially new synthesis and analysis methods compared to the case of known post-change distribution.
Specifically, we show that the mixture version of the Shiryaev--Roberts procedure (referred in this paper as the weighted SR procedure), which is a natural generalization of the 
Shiryaev--Roberts (SR) procedure in the case of the composite hypothesis,  is asymptotically optimal in the class of procedures with the prescribed 
maximal conditional probability of false alarm when it is small, minimizing moments of the detection delay pointwise (for all change points) as well as in the minimax sense 
(for the worst change point and the worst parameter value). 
While basic ideas and methods are similar to those used in the recent publications by Pergamenchtchikov and Tartakovsky~\citep{PergTarSISP2016} and Tartakovsky~\cite{TartakovskyIEEEIT2018}
the results related to asymptotic optimality by no means trivially follow from these works, especially in the minimax setting. Also, since verification of a general condition that guarantees asymptotic optimality 
related to the $r$-complete-type convergence of the properly normalized log-likelihood ratio to a finite number locally in the vicinity of a true parameter value is a difficult task, an
important contribution is obtaining a set of sufficient conditions for optimality for a wide class of Markov processes (see Section~\ref{sec:Mrk}).

The rest of the paper is organized as follows. In Section~\ref{sec:PF}, we specify the weighted (mixture) Shiryaev--Roberts (WSR) procedure and formulate pointwise and minimax optimization problems of interest.
In Section~\ref{sec:Bay}, we consider the Bayesian version of the problem in the class of procedures with the given weighted probability of false alarm. Based on the recent results of 
Tartakovsky~\citep{TartakovskyIEEEIT2018} we establish asymptotic pointwise and minimax properties of the WSR procedure. These results allow us to establish the main theoretical results in Section~\ref{sec:MaRe}
regarding asymptotic optimality in the class of procedures with the local false alarm probability constraint (in a fixed window). In Section~\ref{sec:Mrk}, we find certain sufficient conditions 
for asymptotic optimality for the class of ergodic Markov processes. In Section~\ref{sec:Ex}, we provide examples of detecting changes in multivariate Markov models. In Section~\ref{sec:MC}, we present
the results of Monte Carlo simulations for a particular example of detecting a change in the correlation coefficient of the Markov Gaussian process that allow us to compare the performance of the weighted SR 
with that of the SR procedure as well as to verify the precision of the first-order asymptotic approximations that follow from the theory.

\section{Problem formulation and the detection procedure} \label{sec:PF}

Let $W$ be a probability distribution on $\Theta$, which is assumed to be quite arbitrary satisfying the condition

\vspace{3mm}
\noindent $(\C_{W})$ {\em For any $\delta>0$, the distribution $W(u)$  is positive on  $\{u \in\Theta\,:\,\vert u-\theta\vert<\delta\}$ 
for every $\theta\in\Theta$: 
\[
W\{u\in\Theta\,:\,\vert u-\theta\vert<\delta \}>0 \quad \forall ~ \delta >0, ~ \theta \in\Theta.
\]
}
\noindent This condition means that we do not consider parameter values of $\theta$ from $\Theta$ of the measure null. See Remark~1 in Section~\ref{sec:Rem} for a discussion of 
the choice of the distribution function $W(\theta)$.

Define the weighted (average) likelihood ratio (LR)
\begin{equation*}
\Lambda_n^k(W) = \int_\Theta \prod_{i=k+1}^{n}  \frac{f_{\theta,i}(X_i|\Xb^{i-1})}{\psi_i(X_i|\Xb^{i-1})} \, \d W(\theta), \quad n >k
\end{equation*}
and the weighted SR statistic
\begin{equation}\label{WSR_stat}
R_n^W = \sum_{k=1}^n \Lambda_n^k(W) \equiv \int_\Theta R_n(\theta) \, {\rm d} W(\theta), \quad n \ge 1, ~~ R_0^W=0,
\end{equation}
where
\begin{equation}\label{SR_stat_noniid}
R_n(\theta) = \sum_{k=1}^n \prod_{i=k}^n \frac{f_{\theta,i}(X_i|\Xb^{i-1})}{\psi_i(X_i|\Xb^{i-1})}
\end{equation}
is the SR statistic tuned to $\theta\in\Theta$.   In this paper, we consider the {\em Weighted Shiryaev--Roberts} (WSR)  detection procedure (or mixture SR) given by the stopping time
\begin{equation}\label{WSR_def2}
T_a =\inf\set{n \ge 1: \log R_n^W \ge a} ,
\end{equation}
where $a>-\infty$ is a threshold controlling for the false alarm risk. In definitions of stopping times we always set $\inf\{\varnothing\}=+\infty$.

For any $0<\beta<1$,  $m\ge 1$, and $\ell \ge 1$ introduce the class of change detection procedures that upper-bounds
the {\em local conditional probability of false alarm} (LCPFA) $\Pb_{\infty}(\tau < k+m | \tau\ge k)= \Pb_{\infty}(k\le \tau < k+m | \tau\ge k)$ in the time interval $[k, k+m-1]$ of the length $m$: 
\begin{equation}\label{sec:Prbf.4}
\Hc(\beta, \ell, m)=\left\{\tau\in\cM: \sup_{1\le k\le \ell}\, \Pb_{\infty}(\tau < k+m | \tau \ge k) \le \beta\right\} ,
\end{equation} 
where $\cM$ is a class of all Markov times. For $r \ge 1$ and $\theta\in\Theta$, we consider the risk associated with the {\em conditional $r$-th moment of the detection delay}
\begin{equation}\label{SCrADD}
\Rc^r_{\nu,\theta}(\tau)=  \EV_{\nu,\theta}\left[(\tau-\nu)^r\,|\,\tau> \nu \right] 
\end{equation}
and the following three problems (pointwise and minimax).

\vspace{3mm}

\noindent {\sl Pointwise  Optimization Problem}:  
\begin{equation}\label{sec:Prbf.5-0}
\inf_{\tau\in \Hc(\beta, \ell, m)}\, \Rc^r_{\nu,\theta}(\tau) \quad \text{for every}~ \nu\ge 0,  ~ \theta\in\Theta .
\end{equation}

\vspace{3mm}

\noindent {\sl Minimax Optimization Problem}:  
\begin{equation}\label{sec:Prbf.5}
\inf_{\tau\in \Hc(\beta, \ell, m)}\,\sup_{0 \le \nu < \infty}\,\Rc^r_{\nu, \theta}(\tau) \quad \text{for every}~ \theta\in\Theta.
\end{equation}

\vspace{3mm}

\noindent  {\sl Double Minimax Optimization Problem}:  
\begin{equation}\label{Minimax2}
\inf_{\tau\in \Hc(\beta, \ell, m)}\,\sup_{\theta\in\Theta}  \, \sup_{0 \le \nu < \infty}\,I_\theta^r \, \Rc^r_{\nu, \theta}(\tau) .
\end{equation}
The function $I(\theta)=I_\theta$, which characterizes  the distance between pre- and post-change distributions, and the parameters $\ell$ and $m$  will be specified later.

However, solving the optimization problems \eqref{sec:Prbf.5-0}--\eqref{Minimax2} for any LCPFA $\beta<1$ is practically impossible, especially for the general non-i.i.d.\ model \eqref{noniidmodel}. 
For this reason, we will focus on the asymptotic problems when the LCPFA $\beta$ goes to $0$. Our goal is to show that the WSR detection procedure $T_a$ with a suitable threshold $a=a_\beta$ is 
first-order asymptotically pointwise and minimax optimal in class $\Hc(\beta, \ell_\beta, m_\beta)$, where $\ell_\beta$ and $m_\beta$ tend to infinity as $\beta\to0$ with a suitable rate.  

In addition, in the next section, we consider a Bayesian-type problem of minimizing the risk \eqref{SCrADD} in a class of procedures with the given weighted probability of false alarm. The solution of this problem is 
used for obtaining the main optimality results in Section~\ref{sec:MaRe} in asymptotic versions of optimization problems \eqref{sec:Prbf.5-0}--\eqref{Minimax2} as $\beta\to0$.


\section{Asymptotic optimality under weighted PFA constraint}\label{sec:Bay} 

\subsection{The non-i.i.d. case}

In order to solve asymptotic (as $\beta\to0$) optimization problems \eqref{sec:Prbf.5-0}, \eqref{sec:Prbf.5} and \eqref{Minimax2}  it is constructive to consider a Bayesian-type class of change detection 
procedures that upper-bounds a weighted probability of false alarm 
\[
\PFA(\tau) = \sum_{k=0}^\infty \Pb_{k,\theta}(\tau \le k) \Pb(\nu =k) = \sum_{k=0}^\infty \Pb_\infty(\tau \le k) \Pb(\nu =k)  , 
\]
assuming that the change point $\nu$ is a random variable independent of the observations with prior distribution 
$\Pb(\nu=k)$, $k\in \Zbb_+ :=\{0,1,2,\dots\}$, and the optimization problems 
\begin{equation}\label{sec:PrbfBayes1}
\inf_{\{\tau: \PFA(\tau) \le \alpha\}}\,\Rc_{k,\theta}^r(\tau),   \quad \forall ~ k \in \Zbb_+ , ~\theta\in \Theta \, ,
\end{equation}
\begin{equation}\label{sec:PrbfBayes}
\inf_{\{\tau: \PFA(\tau) \le \alpha\}}\,\sup_{k\ge 0} \, \Rc_{k,\theta}^r(\tau),   \quad \forall ~\theta\in \Theta \, ,  
\end{equation}
\begin{equation}\label{sec:PrbfBayes3}
\inf_{\{\tau: \PFA(\tau) \le \alpha\}}\, \sup_{\theta\in\Theta}  \, \sup_{k \ge 0}\,I_\theta^r \, \Rc^r_{k, \theta}(\tau) \, ,
\end{equation}
where $0<\alpha <1$ is a prespecified (usually relatively small) number.

This idea was used in \citep{PergTarSISP2016} for establishing asymptotic optimality of the SR procedure in the case of a simple post-change hypothesis.

In what follows, for our purposes it suffices to assume that the prior probability distribution $\Pb(\nu=k)$ of the change point $\nu$ is geometric with the parameter $0<\varrho <1$, i.e.,  for $k \in \Zbb_+$,
\begin{equation}\label{sec:Prbf.1}
\Pb(\nu=k)=\pi_{k}(\varrho)=\varrho\,\left(1-\varrho\right)^{k}\,, 
\end{equation}
so that $\PFA(\tau)= \sum_{k=0}^\infty \varrho\,\left(1-\varrho\right)^{k} \Pb_\infty(\tau \le k)$.

Now, for some fixed $0<\varrho,\alpha<1$, define the following {\em Bayesian class} of change-point detection procedures with the weighted PFA 
not greater that the given number $\alpha$:
\begin{equation}\label{sec:Prbf.2}
\Delta(\alpha,\varrho)= \left\{\tau\in\Mcc:\, \PFA(\tau) \le \alpha\right \}.
\end{equation}

For a fixed $\theta \in \Theta$, introduce the log-likelihood ratio (LLR) process $\{Z_{n}^{k}(\theta)\}_{n \ge k+1}$ between the hypotheses $\Hyp_k^\theta$ ($k\in \Zbb_+$) and $\Hyp_\infty$:
\begin{equation}\label{Znk_df}
Z_{n}^{k} (\theta) = \sum_{j=k+1}^{n}\, \log \frac{f_{\theta,j}(X_{j}|\Xb^{j-1})}{\psi_{j}(X_{j}|\Xb^{j-1})} .
\end{equation}

Assume that there is a positive and finite number $I_\theta$ such that the normalized LLR $n^{-1}Z_{n+k}^k(\theta)$ converges to $I_\theta$ $r$-completely, i.e.,
\begin{equation}\label{rcompletedef}
\sum_{n=1}^\infty n^{r-1} \Pb_{k,\theta}\set{\abs{n^{-1}Z_{n+k}^k(\theta) - I_\theta} > \varepsilon} < \infty, \quad \forall ~ \varepsilon > 0.
\end{equation}
Then it follows from \citep{TartakovskyIEEEIT2017} that in the Bayesian setting, when one wants to minimize the $r$-th moment of the delay to detection \eqref{sec:PrbfBayes1}
and when the parameter $\theta$ is known, the asymptotically (as $\alpha\to 0$ and $\varrho=\varrho_\alpha \to 0$) optimal detection procedure in  class \eqref{sec:Prbf.2} is the 
Shiryaev--Roberts detection procedure that raises an alarm at the first time such that the SR statistic $R_n(\theta)$
 exceeds threshold $(1-\varrho)/\varrho \alpha$. This result was extended by Tartakovsky~\citep{TartakovskyIEEEIT2018} to the case where $\theta$ is unknown. 
 Below we will use the results obtained in \citep{TartakovskyIEEEIT2018}  to show that the WSR procedure \eqref{WSR_def2} is asymptotically optimal in problems \eqref{sec:PrbfBayes1}--\eqref{sec:PrbfBayes3}
under condition \eqref{rcompletedef} and some other conditions. These Bayesian-type asymptotic optimality results are the key for proving asymptotic optimality properties in class $\Hc(\beta,\ell,m)$ with the
constraint on the maximal LCPFA defined in \eqref{sec:Prbf.4}.

 The following condition is sufficient for obtaining the asymptotic  lower bounds for all positive moments of the detection delay in class $\Delta(\alpha)=\Delta(\alpha,\varrho_\alpha)$: \vspace{3mm}

\noindent $(\A_{1}$) {\em  Assume that there exists a positive and finite number $I_\theta>0$ such that for any $k\ge 0$ and $\varepsilon >0$}
\begin{equation}\label{sec:Pmax}
\lim_{N\to\infty} \Pb_{k,\theta}\set{\frac{1}{N}\max_{1 \le n \le N} Z_{k+n}^{k}(\theta) \ge (1+\varepsilon) I_\theta} =0,  \quad \theta\in \Theta .
\end{equation}
Indeed, by Lemma~1 in \citep{TartakovskyIEEEIT2018}, we have that if condition $(\A_1)$ is satisfied and $\varrho_\alpha\to0$ as $\alpha\to0$, then for every $\nu \ge 0$, $\theta\in\Theta$, and $r\ge 1$ 
\begin{equation} \label{LBRnu}
\liminf_{\alpha\to 0} \frac{1}{|\log\alpha|^r}\, \inf_{\tau\in \Delta(\alpha)} \, \sup_{\nu\ge 0} \, \Rc_{\nu,\theta}^r(\tau) \ge  
\liminf_{\alpha\to 0}  \frac{1}{|\log\alpha|^r}\, \inf_{\tau\in \Delta(\alpha)} \, \Rc^r_{\nu,\theta}(\tau) \ge \frac{1}{I_\theta^r} .
\end{equation}

Note that condition $(\A_{1}$)  holds whenever $Z_n^k(\theta)/(n-k)$ converges almost surely to $I_\theta$ under $\Pb_{k,\theta}$:
\begin{equation}\label{sec:MaRe.1}
\frac{1}{n}Z_{k+n}^{k}(\theta) \xra[n\to\infty]{\Pb_{k,\theta}-\text{a.s.}} I_\theta,  \quad  \theta\in \Theta.
\end{equation}
\noindent This is always true for i.i.d. data models with 
\[
I_\theta=\EV_{0,\theta} [Z_{1}^0(\theta)] = \int \log \brcs{\frac{f_\theta(x)}{\psi(x)}} f_\theta(x) \d\mu(x)
\]
being the Kullback--Leibler information number.

The first question we ask is how to select the threshold in the WSR procedure to imbed it into class $\Delta(\alpha,\varrho)$. The following lemma answers this question. 
\begin{lemma}\label{Lem:PFASR} 
For all $a >0$ and any prior distribution of $\nu$ with finite mean $\bar\nu = \sum_{j=1}^\infty j \, \Pb(\nu=j)$, the PFA of the WSR procedure $T_{a}$ given by \eqref{WSR_def2} satisfies the inequality 
\[
\PFA(T_a) \le \bar\nu e^{-a},
\]
so that if
\[
a= a_\alpha = \log \brc{\bar\nu/\alpha}
\]
then $\PFA(T_{a_\alpha}) \le \alpha$.  Therefore, if the prior distribution of the change point is geometric, then $T_{a_\alpha}\in  \Delta(\alpha,\varrho)$ for any $0<\alpha, \varrho<1$ 
if $a_\alpha=\log[(1-\varrho)/\varrho\alpha]$. 
\end{lemma}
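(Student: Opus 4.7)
The plan is to exploit the fact that the weighted Shiryaev--Roberts statistic is a non-negative $\Pb_\infty$-submartingale with an explicit expected value, and then apply Doob's maximal inequality followed by a union bound over the prior on $\nu$.

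First, I would verify that $\{R_n^W - n, \Fc_n\}_{n \ge 0}$ is a mean-zero $\Pb_\infty$-martingale (with $R_0^W = 0$). The key ingredient is that, under $\Pb_\infty$, the conditional density of $X_n$ given $\Fc_{n-1}$ is $\psi_n(\cdot\mid \Xb^{n-1})$, so that for every $\theta\in\Theta$
\[
\EV_\infty\!\left[\frac{f_{\theta,n}(X_n\mid\Xb^{n-1})}{\psi_n(X_n\mid\Xb^{n-1})}\,\bigg|\,\Fc_{n-1}\right] = 1 .
\]
Iterating, the simple likelihood ratio $\prod_{i=k+1}^n f_{\theta,i}/\psi_i$ is a $\Pb_\infty$-martingale with unit mean once it starts; a conditional Fubini step, justified by non-negativity, carries this property through the mixing integral against $W$, giving $\EV_\infty[\Lambda_n^k(W)\mid\Fc_{n-1}] = \Lambda_{n-1}^k(W)$ for $k \le n-1$, while $\Lambda_n^n(W) = 1$ trivially. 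Summing over $k$ yields $\EV_\infty[R_n^W\mid\Fc_{n-1}] = R_{n-1}^W + 1$, whence $\EV_\infty[R_n^W] = n$ by induction.

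Next, since $R_n^W \ge 0$ is a $\Pb_\infty$-submartingale, Doob's maximal inequality applied to the event $\{T_a \le k\} = \{\max_{1\le n\le k} R_n^W \ge e^a\}$ gives $\Pb_\infty(T_a \le k) \le k\,e^{-a}$ for every $k \ge 1$, while the $k=0$ term vanishes because $T_a \ge 1$. Using the independence of $\nu$ from the observations and summing against the prior,
\[
\PFA(T_a) = \sum_{k=0}^\infty \Pb(\nu=k)\,\Pb_\infty(T_a\le k) \le e^{-a}\sum_{k=1}^\infty k\,\Pb(\nu=k) = \bar\nu\, e^{-a}.
\]
Setting $a_\alpha = \log(\bar\nu/\alpha)$ then forces $\PFA(T_{a_\alpha}) \le \alpha$, and for the geometric prior $\pi_k(\varrho) = \varrho(1-\varrho)^k$, $k\in\Zbb_+$, a direct series evaluation gives $\bar\nu = (1-\varrho)/\varrho$, producing the stated threshold $a_\alpha = \log[(1-\varrho)/(\varrho\alpha)]$ and hence $T_{a_\alpha}\in\Delta(\alpha,\varrho)$.

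I do not anticipate any substantial obstacle; the only mild technical point is the conditional-Fubini exchange that transfers the martingale property of each simple likelihood ratio through the mixture over $\theta$, which is immediate by non-negativity of the integrands. Everything else reduces to Doob's inequality for a non-negative submartingale and to the independence of the change point from the data.
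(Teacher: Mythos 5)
Your proposal is correct and follows essentially the same route as the paper: establish that $R_n^W$ is a nonnegative $\Pb_\infty$-submartingale with $\EV_\infty[R_n^W]=n$, apply Doob's maximal inequality to get $\Pb_\infty(T_a\le k)\le k\,e^{-a}$, and sum against the prior, with the geometric-prior mean $(1-\varrho)/\varrho$ giving the stated threshold. Your explicit verification of the martingale property via the conditional Fubini step is a detail the paper states without proof, but it does not change the argument.
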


\proof
Note that under $\Pb_\infty$ the  WSR statistic $R_n^W$ is a  submartingale with mean $\EV_\infty [R_n^W]=n$. Thus, by Doob's submartingale inequality, for $j\in \{1,2,\dots\}$
 \begin{equation}\label{upperPrj}
 \Pb_\infty(T_a \le j) =\Pb_\infty\brc{\max_{1\le i \le j} R_i^W \ge e^a} \le j \,  e^{-a}
 \end{equation}
and $\Pb_\infty(T_a \le 0) =0$. Hence, for any prior distribution (not necessarily geometric)
\begin{equation*}
\PFA(T_{a})  =\sum_{j=1}^\infty \Pb(\nu=j) \Pb_\infty(T_a \le j) \le \bar{\nu} e^{-a}  .
\end{equation*} 
Therefore, assuming that $\bar{\nu}<\infty$, we obtain that setting 
$a=a_\alpha = \log(\bar\nu/\alpha)$ implies  $\PFA(T_{a_\alpha}) \le \alpha$.  If particularly,  the prior distribution is  geometric, then
$T_{a_\alpha}\in  \Delta(\alpha,\varrho)$ for any $0<\alpha, \varrho<1$ when $a_\alpha=\log[(1-\varrho)/\varrho\alpha]$. 
\endproof

In order to study asymptotic approximations to risks of the WSR procedure and for establishing its asymptotic optimality, we impose the following left-tail condition:\vspace{3mm}

\noindent $(\A_{2}(r)$) {\em There exists a positive continuous $\Theta\to\bbr$ function $I(\theta)=I_\theta$ such that for every $\theta\in\Theta$, for any  $\vae>0$, and for some $r\ge 1$
\begin{equation*}
\Upsilon_r(\varepsilon,\theta):= \lim_{\delta\to 0}\, \sum_{n=1}^\infty \, n^{r-1} \, \sup_{\nu \ge 0} \Pb_{\nu,\theta}\brc{\frac{1}{n} \inf_{\vert u-\theta\vert<\delta} Z_{\nu+n}^\nu(u) < I_\theta  - \varepsilon} <\infty \, .
\end{equation*}
}

To check this condition it is sufficient to check the following condition:

\vspace{3mm}

\noindent $(\A^{*}_{2}(r)$) {\em 
There exists a positive continuous $\Theta\to\bbr$ function $I(\theta)$ such that for every compact set $\K\subseteq \Theta$, for any  $\vae>0$, and for some $r\ge 1$
\begin{equation*}
\Upsilon^{*}_r(\varepsilon, \K):=\,\sup_{\theta\in\K}\,\Upsilon_r(\varepsilon,\theta)<\infty\,.
\end{equation*}
}

In what follows, we assume that the parameter $\varrho$ is a function of $\alpha$ such that
\begin{equation}\label{sec:Bay.6}
\lim_{\alpha\to 0}\,\varrho_{\alpha} =0, \quad\quad \lim_{\alpha\to 0}\,\frac{|\log\varrho_{\alpha}|}{|\log\alpha|}=0 .
\end{equation}

\noindent Moreover, let  $k^{*}$ be a function of $\alpha$ such that
\begin{equation}\label{sec:Bay.7}
\lim_{\alpha\to 0} k^{*}_\alpha=\infty, \quad\quad \lim_{\alpha\to0} \alpha \, \varrho_\alpha \,  k^*_\alpha =0. 
\end{equation}

Note that if in the WSR procedure defined in \eqref{WSR_stat}--\eqref{WSR_def2} the threshold $a= a(\alpha,\varrho_\alpha)$ is selected as 
\[
a_\alpha= (1-\varrho_\alpha)/(\varrho_\alpha \alpha) ,
\] 
where $\varrho_\alpha$ satisfies condition \eqref{sec:Bay.6},  then $a_\alpha \sim |\log\alpha|$ and, by Lemma~\ref{Lem:PFASR}, 
$\PFA(T_{a_\alpha}) \le \alpha$, i.e., this choice of the threshold guarantees that $T_{a_\alpha} \in \Delta(\alpha, \varrho_\alpha) = \Delta(\alpha)$ for every $0<\alpha<1$. 
Hereafter, notation $b_c\sim \tilde{b}_c$ as $c\to c_0$ means that $\lim_{c\to c_0} (b_c/\tilde{b}_c)=1$, i.e., $b_c=\tilde{b}_c(1+o(1))$, where $o(1)\to0$ as $c\to c_0$.

The following theorem establishes first-order asymptotic optimality of the WSR procedure in class $\Delta(\alpha,\varrho_\alpha)=\Delta(\alpha)$.  

\begin{theorem}\label{Th:Bayesopt} 
 Assume that right-tail and left-tail conditions $(\A_{1})$ and $(\A_{2}(r))$ hold  for some  $0<I_\theta<\infty$ and the parameter $0<\varrho=\varrho_\alpha<1$  of the geometric prior distribution 
satisfies conditions \eqref{sec:Bay.6}. 

\noindent {\bf (i)} Then, for all $\theta\in\Theta$ and all fixed $\nu \in \Zbb_+$, as $a\to\infty$, 
\begin{equation}\label{ASapproxBayes}
\Rc_{\nu,\theta}^r(T_a) ~ \sim ~  \brc{\frac{a}{I_\theta}}^r  .
\end{equation}
 If $a=a_\alpha$ is so selected that $\PFA(T_{a_\alpha}) \le \alpha$ and $\log a_\alpha\sim |\log \alpha|$ as $\alpha\to 0$,
in particular $a_\alpha = \log[(1-\varrho_\alpha)/\varrho_\alpha\alpha]$, then, for all $\theta\in\Theta$ and all fixed $\nu \in \Zbb_+$, as $\alpha\to0$, 
\begin{equation}\label{ASoptBayes1}
\inf_{\tau\in\Delta(\alpha)}\Rc_{\nu,\theta}^r(\tau)  ~ \sim ~ \brc{\frac{|\log \alpha|}{I_\theta}}^r ~ \sim ~ \Rc_{\nu,\theta}^r(T_{a_\alpha}) .
\end{equation}

\noindent {\bf (ii)} If $k^*=k^*_\alpha$ satisfies conditions \eqref{sec:Bay.7}  and if $a=a_\alpha$ is so selected that $\PFA(T_{a_\alpha}) \le \alpha$ and $\log a_\alpha\sim |\log \alpha|$ as $\alpha\to 0$,
in particular as $a_\alpha = \log[(1-\varrho_\alpha)/\varrho_\alpha\alpha]$, then, for all $\theta\in\Theta$, as $\alpha\to0$, 
\begin{equation}\label{ASoptBayes2}
\inf_{\tau\in\Delta(\alpha)} \max_{0\le \nu \le k_\alpha^*} \Rc^r_{\nu,\theta}(\tau)  ~ \sim ~ \brc{\frac{|\log \alpha|}{I_\theta}}^r ~ \sim ~  \max_{0\le \nu \le k_\alpha^*} \Rc^r_{\nu,\theta}(T_{a_\alpha}).
\end{equation}

Thus, the WSR procedure $T_{a_\alpha}$ is first-order asymptotically pointwise optimal and minimax in class $\Delta(\alpha,\varrho_\alpha)=\Delta(\alpha)$ with respect to the moments of the 
detection delay up to order $r$.
\end{theorem}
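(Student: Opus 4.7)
The lower bound is already in place: from \eqref{LBRnu}, which follows from $(\A_1)$ via Lemma~1 of \citep{TartakovskyIEEEIT2018}, we have $\liminf_{\alpha\to 0} |\log\alpha|^{-r}\inf_{\tau\in\Delta(\alpha)}\Rc_{\nu,\theta}^r(\tau) \ge I_\theta^{-r}$ for every fixed $\nu\ge 0$, and the same bound applies to the supremum over $\nu$ and to the maximum over $\nu\le k_\alpha^*$. The real task is to prove a matching upper bound for the WSR procedure $T_{a_\alpha}$.

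The strategy for the upper bound is to minorize the weighted statistic by a single-term likelihood ratio averaged over a shrinking neighborhood of $\theta$. Fix $\delta>0$ and let $U_\delta(\theta)=\set{u\in\Theta:\,|u-\theta|<\delta}$, noting that $W(U_\delta(\theta))>0$ by $(\C_W)$. Keeping only the $k=\nu+1$ summand in \eqref{SR_stat_noniid} and restricting the integral in \eqref{WSR_stat} to $U_\delta(\theta)$ gives, for $n>\nu$,
\begin{equation*}
\log R_n^W \ge \log W(U_\delta(\theta)) + \inf_{|u-\theta|<\delta} Z_n^\nu(u) .
\end{equation*}
Setting $b_a(\delta)=a-\log W(U_\delta(\theta))$ and
\begin{equation*}
\eta_a(\delta)=\inf\set{n\ge 1 : \inf_{|u-\theta|<\delta} Z_{\nu+n}^\nu(u) \ge b_a(\delta)} ,
\end{equation*}
this yields the pathwise bound $(T_a-\nu)_+\le\eta_a(\delta)$. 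For small $\varepsilon>0$ let $N_{a,\varepsilon}=\lceil b_a(\delta)/(I_\theta-\varepsilon)\rceil$; Abel summation then gives
\begin{equation*}
\EV_{\nu,\theta}\brc{\eta_a(\delta)^r} \le N_{a,\varepsilon}^r + r\sum_{n>N_{a,\varepsilon}} n^{r-1}\sup_{\nu'\ge 0}\Pb_{\nu',\theta}\brc{\frac{1}{n}\inf_{|u-\theta|<\delta}Z_{\nu'+n}^{\nu'}(u) < I_\theta-\varepsilon} .
\end{equation*}
By condition $(\A_2(r))$, for $\delta$ sufficiently small the tail sum is finite and $o(a^r)$, so $\EV_{\nu,\theta}\brc{(T_a-\nu)_+^r}\le (a/(I_\theta-\varepsilon))^r(1+o(1))$ uniformly in $\nu\ge 0$ as $a\to\infty$; sending $\varepsilon\to 0$ gives $\EV_{\nu,\theta}\brc{(T_a-\nu)_+^r}\le (a/I_\theta)^r(1+o(1))$.

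To pass to the conditional moment, Doob's inequality (exactly as in the proof of Lemma~\ref{Lem:PFASR}) yields $\Pb_{\nu,\theta}(T_a>\nu)=1-\Pb_\infty(T_a\le\nu)\ge 1-\nu e^{-a}$. For part~(i), $\nu$ is fixed and $a_\alpha\sim|\log\alpha|$, so this probability tends to $1$; combined with the lower bound this produces \eqref{ASapproxBayes} and \eqref{ASoptBayes1}. For part~(ii), $e^{-a_\alpha}=\varrho_\alpha\alpha/(1-\varrho_\alpha)$, and condition \eqref{sec:Bay.7} forces $k_\alpha^* e^{-a_\alpha}\to 0$, so $\Pb_{\nu,\theta}(T_{a_\alpha}>\nu)\to 1$ uniformly over $\nu\in[0,k_\alpha^*]$, delivering \eqref{ASoptBayes2}. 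The main obstacle is that the bound on $\EV_{\nu,\theta}\brc{\eta_a(\delta)^r}$ must be uniform in $\nu\ge 0$ for the minimax assertion; this is precisely the role of the $\sup_{\nu\ge 0}$ inside the probability in $(\A_2(r))$, and its interplay with the $\nu$-uniform denominator control provided by \eqref{sec:Bay.7} is the key technical point.
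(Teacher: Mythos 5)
Your proposal is correct, but it takes a genuinely more self-contained route than the paper. The paper's proof of part (i) is essentially by citation: the asymptotics \eqref{ASapproxBayes} is imported from Theorem~3 of \citep{TartakovskyIEEEIT2018} and \eqref{ASoptBayes1} from Theorem~4 there; only part (ii) is argued explicitly, via the Doob-type bound \eqref{upperPrj}, the observation that $\Pb_{\infty}(T_{a_\alpha}>k^*_\alpha)\ge 1-\alpha\varrho_\alpha k^*_\alpha/(1-\varrho_\alpha)\to1$ under \eqref{sec:Bay.7}, and the lower bound \eqref{LBRnu}. What you do instead is rederive the key ingredient hidden behind those citations: your pathwise minorization $\log R_n^W \ge \log W(U_\delta(\theta)) + \inf_{|u-\theta|<\delta} Z_n^\nu(u)$ (obtained by keeping the single summand starting at the change point and restricting the mixture to a $W$-positive neighborhood, legitimate by $(\C_W)$) together with the Abel-summation step reproduces, from first principles, exactly inequality (A.25) of \citep{TartakovskyIEEEIT2018}, namely $\sup_{\nu\ge 0}\EV_{\nu,\theta}\brcs{(T_a-\nu)^+}^r \le \brc{1+a/(I_\theta-\varepsilon)}^r + r2^{r-1}\Upsilon_r(\varepsilon,\theta)$, which the paper invokes elsewhere (proofs of Theorems~\ref{Th:MinMax} and \ref{Th.sec:Cnrsk.2}) but never proves. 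Your treatment of uniformity in $\nu$ through the $\sup_{\nu}$ inside $(\A_2(r))$, and of the denominator through \eqref{sec:Bay.7}, coincides with the paper's part (ii). Your route buys transparency and makes the $\nu$-uniformity visible in one pass; the paper's buys brevity. Your argument is also structurally the same as the paper's proof of Theorem~\ref{Th:FOAOindep}, except that there the weaker condition \eqref{probLeft} forces a block/cycle decomposition, whereas $(\A_2(r))$ lets you get away with the single post-change block, as you do.

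One small caveat: the standalone asymptotics \eqref{ASapproxBayes} requires a matching lower bound $\Rc^r_{\nu,\theta}(T_a)\ge (a/I_\theta)^r(1+o(1))$ as $a\to\infty$, and ``the lower bound'' you point to, \eqref{LBRnu}, is a statement about the class $\Delta(\alpha)$ indexed by $\alpha$, not about $T_a$ as a function of $a$. The gap closes in one line --- given $a$, choose $\alpha=\alpha(a)$ solving $a=\log[(1-\varrho_{\alpha})/\varrho_{\alpha}\alpha]$, note that $a\sim|\log\alpha(a)|$ by \eqref{sec:Bay.6} and that $T_a\in\Delta(\alpha(a))$ by Lemma~\ref{Lem:PFASR}, then apply \eqref{LBRnu} --- but as written this embedding step is implicit in your ``combined with the lower bound,'' and it should be stated; in the paper this work is done inside the cited Theorem~3 of \citep{TartakovskyIEEEIT2018}. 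Everything else checks out: the inequality $(T_a-\nu)^+\le\eta_a(\delta)$ is valid pathwise, the restriction $n>N_{a,\varepsilon}$ correctly converts $\{\eta_a(\delta)>n\}$ into the left-tail event of $(\A_2(r))$ since $b_a(\delta)\le n(I_\theta-\varepsilon)$ there, and finiteness of the sum for small $\delta$ is exactly what the $\lim_{\delta\to0}$ formulation of $(\A_2(r))$ provides.
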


\proof
Asymptotic approximation \eqref{ASapproxBayes} in assertion (i) follows from Theorem~3 in \citep{TartakovskyIEEEIT2018} and  asymptotic approximations \eqref{ASoptBayes1} 
from Theorem~4 in \citep{TartakovskyIEEEIT2018}.

It remains to prove asymptotic approximations \eqref{ASoptBayes2} in assertion (ii).  If $a_\alpha = \log[(1-\varrho_\alpha)/\varrho_\alpha\alpha]$, then using inequality  \eqref{upperPrj} we obtain 
that for any $0 \le \nu \le k^*_\alpha$,
\begin{equation}\label{PFALB}
\Pb_{\nu,\theta}\left(T_{a_\alpha} > \nu\right)= \Pb_{\infty}\left(T_{a_\alpha} > \nu\right)\ge  \Pb_{\infty}\left(T_{a_\alpha} > k_\alpha^{*} \right) \ge 1-  k^*_\alpha \, e^{-a_\alpha} = 
1- \frac{\alpha \varrho_\alpha k_\alpha^*}{1-\varrho_\alpha} .
\end{equation}
The second condition in \eqref{sec:Bay.7} implies that $\Pb_{\infty}\left(T_{a_\alpha} > \nu\right) \to 1$ as $\alpha\to0$  for all $0 \le \nu\le k^*_\alpha$. Since 
$\Rc_{\nu,\theta}^r(T_{a_\alpha}) =\EV_{\nu,\theta}[(T_{a_\alpha} -\nu)^+]^r/\Pb_{\infty}\left(T_{a_\alpha} > \nu\right)$,
inequality \eqref{LBRnu} along with equalities \eqref{ASoptBayes1} imply \eqref{ASoptBayes2} for $k^*=k^*_\alpha$ satisfying conditions \eqref{sec:Bay.7}.
This completes the proof.
\endproof

\begin{remark}
While for the sake of simplicity we consider the geometric prior distribution with the small parameter 
$\varrho_\alpha$, all the asymptotic results hold true for an arbitrary prior distribution 
$\pi_k^\alpha$ such that  the mean value of the change point $\EV[\nu] = \sum_{k=1}^\infty k \pi_k^\alpha$ approaches infinity as $\alpha\to 0$,
assuming that conditions \eqref{sec:Bay.6} and \eqref{sec:Bay.7} hold with $\varrho_\alpha$ replaced by $(\sum_{k=1}^\infty k \pi_k^\alpha)^{-1}$.  
\end{remark}

It is also interesting to ask whether the WSR procedure is asymptotically optimal with respect to the following double minimax criterion
\[
  \sup_{\theta\in\Theta}  \max_{0\le \nu \le k_\alpha^*} I_\theta^r \, \Rc_{\nu,\theta}^r(\tau) \longrightarrow  ~~ \text{minimum over}~ \tau\in\Delta(\alpha), ~~ \alpha\to0
\]
 (see \eqref{sec:PrbfBayes3}). The following theorem gives an affirmative answer for compact subsets $\Theta_1 \subset \Theta$.

\begin{theorem}\label{Th:MinMax}
 Assume that the right-tail condition $(\A_{1})$ is satisfied  for some  $0<I_\theta<\infty$,  the parameter $0<\varrho=\varrho_\alpha<1$  of the geometric prior distribution 
satisfies conditions \eqref{sec:Bay.6}, and  conditions \eqref{sec:Bay.7} hold for $k^*=k^*_\alpha$. Assume that for every $\vae>0$ and some $r\ge 1$
\begin{equation}\label{rLeft}
\sup_{\theta \in \Theta_1} \Upsilon_r(\varepsilon,\theta)<\infty,  \quad \quad \inf_{\theta\in \Theta_1} I_\theta >0,
\end{equation}
where $\Theta_1$ is compact.  If $a=a_\alpha$ is so selected that $\PFA(T_{a_\alpha}) \le \alpha$ and $\log a_\alpha\sim |\log \alpha|$ as $\alpha\to 0$,
in particular as $a_\alpha = \log[(1-\varrho_\alpha)/\varrho_\alpha\alpha]$, then 
\begin{equation}
  \inf_{\tau\in\Delta(\alpha)} \sup_{\theta\in\Theta_1}  \max_{0\le \nu \le k_\alpha^*} I_\theta^r \, \Rc_{\nu,\theta}^r(\tau) \sim |\log\alpha|^r \sim   
  \sup_{\theta\in\Theta_1}  \max_{0\le \nu \le k_\alpha^*} I_\theta^r \, \Rc_{\nu,\theta}^r(T_{a_\alpha}), \quad \alpha\to 0. 
\end{equation}
\end{theorem}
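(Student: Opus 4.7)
The plan is to follow the same two-step pattern used for Theorem~\ref{Th:Bayesopt}, namely to establish a lower bound on the double-minimax risk that already matches $|\log\alpha|^r$ up to $o(1)$, and then to produce an upper bound on the WSR risk that is uniform in $(\theta,\nu)\in\Theta_1\times[0,k_\alpha^{*}]$.

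For the lower bound, fix any $\theta_0\in\Theta_1$. Because the supremum over $(\theta,\nu)$ dominates the evaluation at $(\theta_0,0)$, for every $\tau\in\Delta(\alpha)$
\[
\sup_{\theta\in\Theta_1}\max_{0\le\nu\le k_\alpha^{*}} I_\theta^r\,\Rc_{\nu,\theta}^r(\tau)\;\ge\; I_{\theta_0}^r\,\Rc_{0,\theta_0}^r(\tau).
\]
Applying the pointwise lower bound \eqref{LBRnu}, which holds under $(\A_1)$ and \eqref{sec:Bay.6}, yields
\[
\liminf_{\alpha\to 0}|\log\alpha|^{-r}\inf_{\tau\in\Delta(\alpha)}\sup_{\theta\in\Theta_1}\max_{0\le\nu\le k_\alpha^{*}} I_\theta^r\,\Rc_{\nu,\theta}^r(\tau)\;\ge\; 1 .
\]
So the lower bound $|\log\alpha|^r(1+o(1))$ is immediate.

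For the upper bound on $\sup_{\theta\in\Theta_1}\max_{\nu}I_\theta^r\Rc_{\nu,\theta}^r(T_{a_\alpha})$, I would upgrade the pointwise approximation \eqref{ASapproxBayes} to one that is uniform in $(\theta,\nu)\in\Theta_1\times[0,k_\alpha^{*}]$. Inspecting the proof of Theorem~3 in \citep{TartakovskyIEEEIT2018}, from which \eqref{ASapproxBayes} is quoted, the upper bound on $\EV_{\nu,\theta}(T_{a_\alpha}-\nu)^+{}^r$ is obtained by majorizing it via the series $\sum_{n\ge 1} n^{r-1}\Pb_{\nu,\theta}(\inf_{|u-\theta|<\delta}Z^\nu_{\nu+n}(u)<n(I_\theta-\varepsilon))$ plus a deterministic $(a_\alpha/(I_\theta-\varepsilon))^r$ term. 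The uniform left-tail condition \eqref{rLeft} $\sup_{\theta\in\Theta_1}\Upsilon_r(\varepsilon,\theta)<\infty$, together with the continuity of $I_\theta$ on the compact $\Theta_1$ and $\inf_{\Theta_1}I_\theta>0$, lets one choose a single pair $(\varepsilon,\delta)$ which makes this residual series uniformly bounded. Hence
\[
\sup_{\theta\in\Theta_1}\sup_{\nu\ge 0}\frac{\EV_{\nu,\theta}(T_{a_\alpha}-\nu)^+{}^r}{(a_\alpha/I_\theta)^r}\;\le\; 1+o(1),\quad \alpha\to 0 .
\]
The denominator $\Pb_\infty(T_{a_\alpha}>\nu)$ in $\Rc_{\nu,\theta}^r(T_{a_\alpha})$ is handled exactly as in the proof of Theorem~\ref{Th:Bayesopt}(ii): inequality \eqref{PFALB} together with the second part of \eqref{sec:Bay.7} gives $\inf_{0\le\nu\le k_\alpha^{*}}\Pb_\infty(T_{a_\alpha}>\nu)\ge 1-\alpha\varrho_\alpha k_\alpha^{*}/(1-\varrho_\alpha)\to 1$, independently of $\theta$. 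Multiplying by $I_\theta^r$ cancels the $I_\theta^{-r}$ factor and, using $a_\alpha\sim|\log\alpha|$, produces
\[
\sup_{\theta\in\Theta_1}\max_{0\le\nu\le k_\alpha^{*}} I_\theta^r\,\Rc_{\nu,\theta}^r(T_{a_\alpha})\;\le\; |\log\alpha|^r(1+o(1)) .
\]

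The main obstacle is the uniformity step: verifying that the argument in \citep{TartakovskyIEEEIT2018} yielding \eqref{ASapproxBayes} can indeed be run with a single $(\varepsilon,\delta)$ across $\Theta_1$. This hinges precisely on $\sup_{\theta\in\Theta_1}\Upsilon_r(\varepsilon,\theta)<\infty$ in \eqref{rLeft} for controlling the stochastic remainder, on the continuity and strict positivity of $I_\theta$ on the compact $\Theta_1$ for controlling the deterministic $1/I_\theta^r$ factor, and on the $\theta$-free lower bound on $\Pb_\infty(T_{a_\alpha}>\nu)$ arising from \eqref{PFALB} and \eqref{sec:Bay.7} for controlling the conditioning. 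Once these are in place, combining the matching lower and upper bounds proves the theorem.
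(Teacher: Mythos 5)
Your proposal is correct and takes essentially the same route as the paper's proof: the lower bound follows from \eqref{LBRnu} evaluated at a fixed $(\theta_0,\nu)$, and the upper bound is the series-plus-deterministic-term moment inequality you reconstruct, which is exactly inequality (A.25) of \citep{TartakovskyIEEEIT2018} that the paper cites directly, with \eqref{rLeft} (plus continuity of $I_\theta$ on the compact $\Theta_1$) giving uniformity in $\theta$ and with \eqref{PFALB} together with \eqref{sec:Bay.6}--\eqref{sec:Bay.7} controlling $\Pb_\infty(T_{a_\alpha}>\nu)$ uniformly over $0\le\nu\le k_\alpha^*$. The only cosmetic difference is that you propose to re-run the proof of Theorem~3 of \citep{TartakovskyIEEEIT2018} with a single pair $(\varepsilon,\delta)$, whereas the paper simply quotes the ready-made per-$\theta$ bound and takes the supremum over $\Theta_1$.
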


\proof
Using inequalities \eqref{LBRnu}, we obtain that under condition $(\A_{1})$  the following asymptotic lower bound holds: 
\begin{equation} \label{LowerMinMax} 
\inf_{\tau\in \Delta(\alpha)} \, \sup_{\theta \in \Theta_1} I_\theta^r \, \Rc^r_{\nu,\theta}(\tau) \ge |\log \alpha|^r(1+o(1)) , \quad \alpha \to 0 .
\end{equation}

To prove the theorem it suffices to show that the right-hand side in \eqref{LowerMinMax} is attained for the risk 
\[
\sup_{\theta\in\Theta_1}  \max_{0\le \nu \le k_\alpha^*} I_\theta^r \Rc_{\nu,\theta}^r(T_{a_\alpha})
\]
 of the WSR procedure. Using inequality (A.25) in \citep{TartakovskyIEEEIT2018}, we obtain that for an arbitrary $0<\varepsilon < I_\theta$
\begin{equation*}
\sup_{\theta\in\Theta_1} I_\theta^r \sup_{\nu\ge 0} \EV_{\nu,\theta}\brcs{(T_a-\nu)^+}^r \le  
\sup_{\theta\in\Theta_1} I_\theta^r \brc{1+\frac{a}{I_\theta-\varepsilon}}^r +  r2^{r-1} \,  \sup_{\theta\in\Theta_1} I_\theta^r \,\Upsilon_r(\varepsilon,\theta).
 \end{equation*}
By condition \eqref{rLeft}, the second term on the right side is finite, which immediately implies that 
\[
\sup_{\theta\in\Theta_1} I_\theta^r \sup_{\nu\ge 0} \EV_{\nu,\theta}\brcs{(T_a-\nu)^+}^r \le a^r (1+o(1)), \quad  a \to \infty.
\]
Next, using inequality \eqref{PFALB}, we obtain that for $ \nu \le k^*_\alpha$
\[
\Rc_{\nu,\theta}^r(T_{a_\alpha}) = \frac{\EV_{\nu,\theta}[(T_{a_\alpha} -\nu)^+]^r}{\Pb_{\infty}\left(T_{a_\alpha} > \nu\right)} \le \frac{\EV_{\nu,\theta}[(T_{a_\alpha} -\nu)^+]^r}{1-  k^*_\alpha e^{-a_\alpha}}
= \frac{(1-\varrho_\alpha) \EV_{\nu,\theta}[(T_{a_\alpha} -\nu)^+]^r}{1-  \varrho_\alpha - \alpha \varrho_\alpha k^*_\alpha},
\]
which along with the previous inequality and conditions \eqref{sec:Bay.6}--\eqref{sec:Bay.7} yields the upper bound
\[
\sup_{\theta\in\Theta_1}  \max_{0\le \nu \le k_\alpha^*} I_\theta^r \, \Rc_{\nu,\theta}^r(T_{a_\alpha}) \le  |\log \alpha|^r(1+o(1)), \quad \alpha \to 0
\]
and the proof is complete.
\endproof

\begin{remark}
Let 
\[
T_B^*(\theta) = \inf\set{n\ge 1: R_n(\theta) \ge B}
\]
be the stopping time of the SR detection procedure tuned to $\theta$, where $R_n(\theta)$ is the SR statistic defined in \eqref{SR_stat_noniid}. If the least favorable value of the parameter $\theta_*$ that maximizes 
the risk
\[ 
\sup_{\theta\in\Theta_1}  \max_{0\le \nu \le k_\alpha^*} I_\theta^r \Rc_{\nu,\theta}^r(T^*_{B_\alpha}(\theta)) = \max_{0\le \nu \le k_\alpha^*} I_{\theta_*}^r \Rc_{\nu,\theta_*}^r(T^*_{B_\alpha}(\theta_*))
\]
can be found (at least approximately within a small term), then the SR rule $T_B^*(\theta_*)$ is asymptotically double minimax. This rule is easier to implement and it should have even smaller maximal risk
than the WSR rule.
\end{remark}

\subsection{The case of LLR with independent increments}\label{ssec:LLRindincr}

We now show that condition $(\A_2(r))$ can be substantially relaxed in the case where observations are independent, but not necessarily identically distributed, i.e., 
$f_{\theta,i}(X_i|\Xb^{i-1})= f_{\theta, i}(X_i)$ and $\psi_i(X_i|\Xb^{i-1})=\psi_i(X_i)$ in \eqref{noniidmodel}. More generally, we may assume that the increments 
$\Delta Z_i(\theta)= \log [f_{\theta,i}(X_i|\Xb^{i-1})/\psi_i(X_i|\Xb^{i-1})]$ of the LLR 
$Z_{n}^k(\theta)=\sum_{i=k+1}^n \Delta Z_i(\theta)$ are independent, which is always the case if the observations are independent. 
This slight generalization is important for certain examples with dependent observations that lead to the LLR with independent increments. 

\begin{theorem}\label{Th:FOAOindep}
 Assume that the LLR process $\{Z_{k+n}^k(\theta)\}_{n\ge 1}$ has independent, 
not necessarily identically distributed increments under $\Pb_{k,\theta}$, $k\ge 0$.  Suppose that condition \eqref{sec:Pmax} holds and the following condition is satisfied 
\begin{equation}\label{probLeft}
\lim_{\delta\to 0} \lim_{n\to\infty}  \Pb_{\nu,\theta}\brc{\frac{1}{n} \int_{\Gamma_\delta} Z_{\ell+n}^\ell(\vartheta) \, \mrm{d} W(\vartheta) < I_\theta  - \varepsilon} =0, \quad \varepsilon >0, ~
 \ell \ge \nu, ~ \theta\in \Theta.
\end{equation}
If the parameter $\varrho=\varrho_\alpha$ of the geometric prior distribution goes to zero 
as $\alpha\to 0$ at rate defined in \eqref{sec:Bay.6}, then relations \eqref{ASoptBayes1} and \eqref{ASoptBayes2} hold for all $r>0$, i.e., 
the WSR procedure is asymptotically optimal  with respect to all positive moments of the detection delay.
\end{theorem}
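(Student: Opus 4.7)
The plan is to reproduce the logic of Theorem~\ref{Th:Bayesopt}, but with the series-type condition $(\A_2(r))$ replaced by the purely probabilistic hypothesis \eqref{probLeft}; independence of the LLR increments is the ingredient that permits this replacement. The lower bound in both \eqref{ASoptBayes1} and \eqref{ASoptBayes2} is inherited unchanged: condition $(\A_1)$ still holds, so Lemma~1 of \citep{TartakovskyIEEEIT2018} gives
\[
\inf_{\tau\in\Delta(\alpha)}\Rc_{\nu,\theta}^r(\tau)\ge \brc{\frac{|\log\alpha|}{I_\theta}}^r(1+o(1)),\qquad \alpha\to 0,
\]
uniformly in $\nu\ge 0$. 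The rest of the proof is devoted to the matching upper bound for the WSR risk.

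First I would reduce the problem to estimating the moments of a single first-passage time. Keeping only the $k=\nu+1$ summand in \eqref{WSR_stat} and applying Jensen's inequality on the probability measure $W(\cdot\cap\Gamma_\delta)/W(\Gamma_\delta)$, where $\Gamma_\delta=\set{\vartheta:|\vartheta-\theta|<\delta}$, gives the pathwise estimate
\[
\log R_{\nu+n}^W\ge\log W(\Gamma_\delta)+\frac{1}{W(\Gamma_\delta)}\int_{\Gamma_\delta}Z_{\nu+n}^\nu(\vartheta)\,\mrm{d}W(\vartheta)=:\log W(\Gamma_\delta)+\bar Z_n(\delta).
\]
Defining $\sigma_{a,\delta}=\inf\set{n\ge 1:\bar Z_n(\delta)\ge a-\log W(\Gamma_\delta)}$, this forces $(T_a-\nu)^+\le\sigma_{a,\delta}$ under $\Pb_{\nu,\theta}$. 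Combined with \eqref{PFALB}, which drives $\Pb_\infty(T_{a_\alpha}>\nu)\to 1$ uniformly for $\nu\le k_\alpha^*$, the whole task is reduced to showing that for every $r>0$, every $\varepsilon>0$, and all sufficiently small $\delta>0$,
\[
\EV_{\nu,\theta}[\sigma_{a,\delta}]^r\le\brc{\frac{a}{I_\theta-\varepsilon}}^r(1+o(1)),\qquad a\to\infty.
\]

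This moment estimate is the main obstacle, since \eqref{probLeft} delivers only convergence in probability, whereas $(\A_2(r))$ was tailored to hand over the full series directly. The favourable point is that, by Fubini and the assumed independence of the increments $\Delta Z_i(\vartheta)$, the sequence $\set{\bar Z_n(\delta)}_{n\ge 1}$ is itself a sum of $n$ independent (non-identically distributed) random variables under $\Pb_{\nu,\theta}$, and \eqref{probLeft} is exactly a weak law of large numbers for these partial sums, with limiting rate at least $I_\theta-\varepsilon$ once $\delta$ is chosen small enough. I would then convert the WLLN into a geometric tail for $\sigma_{a,\delta}$ by a renewal/block decomposition: split $\set{1,2,\dots}$ into successive blocks of length $N_a=\lceil a/(I_\theta-\varepsilon)\rceil$, note that the increments of $\bar Z$ over distinct blocks are independent, and apply \eqref{probLeft} to each block to obtain a bound of the form $\Pb_{\nu,\theta}(\sigma_{a,\delta}>jN_a)\le q^j$ for some $q=q(\varepsilon,\delta)<1$ once $a$ is large. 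Plugging this into $\EV_{\nu,\theta}[\sigma_{a,\delta}^r]=r\int_0^\infty t^{r-1}\Pb_{\nu,\theta}(\sigma_{a,\delta}>t)\,\mrm{d}t$ yields the required bound for every $r>0$.

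The delicate step inside this plan is the block estimate itself: since the LLR increments need not be identically distributed, the WLLN supplied by \eqref{probLeft} has to be reactivated on each fresh block from an arbitrary starting time, which is why the hypothesis is stated uniformly in $\ell\ge\nu$. Once the moment bound on $\sigma_{a,\delta}$ is established, substituting $a=a_\alpha\sim|\log\alpha|$, letting first $a\to\infty$, then $\varepsilon\to 0$ and $\delta\to 0$, and finally comparing with the lower bound produces \eqref{ASoptBayes1} and \eqref{ASoptBayes2} for every $r>0$, completing the proof.
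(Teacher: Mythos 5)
Your overall architecture (lower bound inherited from $(\A_1)$, matching upper bound via blocks of length $N\approx a/(I_\theta-\varepsilon)$, geometric tail in the number of blocks, then $a=a_\alpha\sim|\log\alpha|$) is the paper's, but the step you yourself flag as delicate is where the argument genuinely breaks. By keeping only the single summand of $R_n^W$ corresponding to a change at $\nu$, you reduce everything to the first-passage time $\sigma_{a,\delta}$ of the \emph{cumulative} process $\bar Z_n(\delta)$, and the claimed bound $\Pb_{\nu,\theta}(\sigma_{a,\delta}>jN)\le q^j$ does not follow from \eqref{probLeft} plus independence. The event $\set{\bar Z_n(\delta)<a+|\log W(\Gamma_\delta)|\ \text{for all}\ n\le jN}$ does \emph{not} imply that each independent block increment $\bar Z_{nN}(\delta)-\bar Z_{(n-1)N}(\delta)$ stays below the level: a large block increment is perfectly compatible with the cumulative sum remaining low whenever the preceding partial sum has a deep negative excursion, and nothing in the hypotheses rules such excursions out (the increments $\Delta Z_i(\vartheta)$ may be negative, and \eqref{probLeft} is a rate-free weak law). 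Since $\bar Z_0(\delta)=0$, the first-passage event only controls the \emph{first} block increment, yielding $\Pb_{\nu,\theta}(\sigma_{a,\delta}>jN)\le\delta_a$ with no decay in $j$ — useless against the weight $j^{r-1}$ in the moment series. So the moment bound cannot be closed along your route.

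The missing idea is the \emph{restart property} of the SR-type statistic, which is exactly why the paper does not discard the sum over $k$ in \eqref{WSR_stat}. At the end of the $n$-th block one keeps instead the summand with $k=\nu+(n-1)N$ and applies Jensen on $\Gamma_\delta$ to get
\[
\log R^W_{\nu+nN}\ \ge\ \log\Lambda_{\nu+nN}^{\nu+(n-1)N}(W)\ \ge\ \frac{1}{W(\Gamma_\delta)}\int_{\Gamma_\delta}Z_{\nu+nN}^{\nu+(n-1)N}(\vartheta)\,\mrm{d}W(\vartheta)\ -\ |\log W(\Gamma_\delta)| ,
\]
so the WSR statistic dominates the weighted LLR \emph{over the current block alone}, with no cumulative history attached. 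Consequently $\set{T_a-\nu>\ell N}$ is contained in the intersection over $n\in\{1,\dots,\ell\}$ of events each involving only the increments of the $n$-th block; by the assumed independence of the LLR increments these events are independent, and \eqref{probLeft}, applied with starting point $\nu+(n-1)N$ (this is where the uniformity in $\ell\ge\nu$ that you correctly noted is actually consumed), bounds each factor by some $\delta_a\to0$ once $|\log W(\Gamma_\delta)|/N\le\varepsilon/2$. This yields $\Pb_{\nu,\theta}(T_a-\nu>\ell N)\le\delta_a^\ell$ and hence the required moment bound for every $r>0$. The remainder of your proposal — the lower bound via \eqref{LBRnu}, the denominator control via \eqref{PFALB}, the passage $a\to\infty$, $\varepsilon\to0$, $\delta\to0$, and the reduction of \eqref{ASoptBayes2} to the argument of Theorem~\ref{Th:Bayesopt}(ii) — is correct and matches the paper.
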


\begin{proof}
Let $N=N(a,\varepsilon,\theta)= 1+\lfloor a/(I_\theta-\varepsilon) \rfloor$.  Hereafter $\lfloor x \rfloor$ denotes the integer number less than or equal to $x$.
We begin with showing that the asymptotic upper bound 
\begin{equation}\label{UpperRra}
\Rc^r_{\nu,\theta}(T_{a}) \le \brc{\frac{a}{I_\theta}}^r (1+o(1)), \quad  a \to \infty
\end{equation}
holds for all $r\ge 1$ under condition \eqref{probLeft}. To this end, note that we have the following chain of equalities and inequalities:
\begin{align} \label{ExpkTAplus}
\EV_{\nu,\theta}\brcs{(T_a-\nu)^+}^r   & =  \sum_{\ell=0}^{\infty} \int_{\ell N}^{(\ell+1) N} r t^{r-1}  \Pb_{\nu,\theta} (T_a-k > t) \, \mrm{d} t   
 \le N^r + \sum_{\ell =1}^{\infty} \int_{\ell N}^{(\ell +1) N} r t^{r-1}  \Pb_{\nu,\theta} (T_a-k > t) \, \mrm{d} t  \nonumber
\\
& \le  N^r + \sum_{\ell=1}^{\infty} \int_{\ell N}^{(\ell+1) N} r t^{r-1}  \Pb_{\nu,\theta} (T_a-k > \ell N) \, \mrm{d} t  \nonumber
 = N^r\brc{1 + \sum_{\ell=1}^{\infty} [(\ell+1)^r-\ell^r]  \Pb_{\nu,\theta} (T_a-k > \ell N )} \nonumber
\\
&  \le  N^{r} \brc{1+\sum_{\ell=1}^{\infty}   r (\ell+1)^{r-1}   \Pb_{\nu,\theta} (T_a -k>  \ell N )} 
 \le  N^{r} \brc{1+ r 2^{r-1} \sum_{\ell=1}^{\infty}  \ell^{r-1}   \Pb_{\nu,\theta} (T_a -k>  \ell N )}.  
\end{align}

Consider the intervals (cycles) of the length $N$ and let $K_n(\nu,N)=K_n= \nu+n N$ and
\[
\lambda_{\nu+ n N}^{\nu+(n-1)N}(W) := \int_{\Gamma_\delta} Z_{\nu+ n N}^{\nu+(n-1)N}(\vartheta) \, \mrm{d} W(\vartheta) =
 \sum_{i=K_{n-1}+1}^{K_n}\int_{\Gamma_\delta} \Delta Z_i(\vartheta) \, \mrm{d} W(\vartheta)\,.
\]
Since for any $n\ge 1$ 
\[
\log R_{\nu+n N}^W \ge  \log \Lambda_{\nu+ n N}^{\nu+(n-1)N}(W)  \ge \log \int_{\Gamma_\delta} \exp\set{Z_{\nu+ n N}^{\nu+(n-1)N}(\vartheta)} \, \mrm{d} W(\vartheta)
\]
and, by Jensen's inequality,
\[
 \int_{\Gamma_\delta} \exp\set{Z_{\nu+ n N}^{\nu+(n-1)N}(\vartheta)} \, \mrm{d} W(\vartheta) = 
  W(\Gamma_\delta) \int_{\Gamma_\delta} \exp\set{Z_{\nu+ n N}^{\nu+(n-1)N}(\vartheta)} \, \frac{\mrm{d} W(\vartheta)}{W(\Gamma_\delta)} \ge W(\Gamma_\delta)
  \exp\set{\frac{\lambda_{\nu+ n N}^{\nu+(n-1)N}(W)}{W(\Gamma_\delta)}},
\]
it follows that
\[
 \log R_{\nu+n N}^W \ge \frac{\lambda_{K_n}^{K_{n-1}} (W)}{W(\Gamma_\delta)}  - |\log W(\Gamma_\delta)| , \quad n \ge 1.
\]
Therefore,
\begin{align*}
\Pb_{\nu,\theta} \brc{T_a-\nu > \ell N}  & =\Pb_{\nu,\theta}\brc{\log R_{n}^W < a ~\text{for}~ n\in\{1,\dots,\nu+\ell N\}} 
 \le  \Pb_{\nu,\theta}\brc{\log R_{\nu+n N}^W < a ~\text{for}~ n\in\{1,\dots,\ell\}} 
\\
&
 \le   \Pb_{\nu,\theta}\brc{\frac{\lambda_{K_n}^{K_{n-1}} (W)}{W(\Gamma_\delta)}  < a + |\log W(\Gamma_\delta)| ~\text{for}~ n\in\{1,\dots,\ell\}}\,.
\end{align*}
Since the increments of the LLR are independent, the random variables $\lambda_{K_n}^{K_{n-1}}(W)$, $n\in\{1,\dots,\ell\}$,  are independent, and hence,
\begin{align*}
\Pb_{\nu,\theta} \brc{T_a-\nu > \ell N}  & \le     \Pb_{\nu,\theta}\brc{\frac{\lambda_{K_n}^{K_{n-1}} (W)}{W(\Gamma_\delta)}  < a + |\log W(\Gamma_\delta)| ~\text{for}~ n\in\{1,\dots,\ell\}} 
= \prod_{n=1}^\ell \Pb_{\nu,\theta}\brc{\frac{\lambda_{K_n}^{K_{n-1}} (W)}{W(\Gamma_\delta)} < a + |\log W(\Gamma_\delta)|}
\\
&\le \prod_{n=1}^\ell \Pb_{\nu,\theta}\brc{\frac{\lambda_{K_n}^{K_{n-1}} (W)}{N W(\Gamma_\delta)}  <I_\theta-\varepsilon + |\log W(\Gamma_\delta)|/N}\,.  
\end{align*}
It follows that for a sufficiently large $a>0$, for which $ |\log W(\Gamma_\delta)|/N\le \varepsilon/2$,
$$
\Pb_{\nu,\theta} \brc{T_a-\nu > \ell N} \le \prod_{n=1}^\ell \Pb_{\nu,\theta}\brc{\frac{\lambda_{K_n}^{K_{n-1}} (W)}{N W(\Gamma_\delta)}  <I_\theta-\varepsilon/2}\,. 
$$
By condition \eqref{probLeft}, for a sufficiently large $a$ there exists a small $\delta_a$ such that 
\[
\Pb_{\nu,\theta}\brc{\frac{1}{NW(\Gamma_\delta) }\int_{\Gamma_\delta} Z_{K_n}^{K_{n-1}}(\vartheta) \, \mrm{d}W(\vartheta) < I_\theta  - \varepsilon/2} \le \delta_a, \quad n \ge 1.
\]
Therefore, for any $\ell \ge 1$, 
\[
\Pb_{\nu,\theta} \brc{T_a-\nu > \ell N}  \le \delta_a^\ell.
\]
Combining this inequality with \eqref{ExpkTAplus} and using the fact that $L_{r,a}=\sum_{\ell=1}^\infty \ell^{r-1} \delta_a^\ell \to 0$ as $a\to\infty$ for any $r>0$, we obtain
\begin{align}\label{Momineq}
\Rc^r_{\nu,\theta}(T_{a_\alpha})  &= \frac{\EV_{\nu,\theta}\brcs{(T_{a_\alpha}-\nu)^+}^r}{\Pb_\infty(T_a>\nu)} 
 \le \frac{\brc{1+\frac{\log a}{I_\theta-\varepsilon}}^r + r 2^{r-1}\, L_{r,a}}{1-\nu e^{-a}}   
 = \brc{\frac{a}{I_\theta-\varepsilon}}^r(1+o(1)),  \quad a\to\infty.
\end{align}
Since $\varepsilon\in(0,I_\theta)$ is an arbitrary number, this implies the upper bound \eqref{UpperRra}.

Setting $a_\alpha = \log[(1-\varrho_\alpha)/\varrho_\alpha\alpha]$ or more generally $a_\alpha\sim|\log\alpha|$ in \eqref{Momineq} yields the upper bound (for all $r \ge 1$,  all $\nu\ge 0$, and all $\theta\in\Theta$)
\begin{equation*}
\Rc^r_{\nu,\theta}(T_{a_\alpha}) \le \brc{\frac{|\log\alpha|}{I_\theta}}^r (1+o(1)), \quad \alpha\to0 .
\end{equation*}
Applying this upper bound together with the lower bound \eqref{LBRnu} (which holds due to condition \eqref{sec:Pmax}) proves \eqref{ASoptBayes1}.

The proof of \eqref{ASoptBayes2} is essentially analogous to that in the proof of Theorem~\ref{Th:Bayesopt}(ii) above. 
\end{proof}

\section{Asymptotic optimality under local PFA constraint} \label{sec:MaRe}

We now proceed with the pointwise and minimax problems \eqref{sec:Prbf.5-0}, \eqref{sec:Prbf.5} and \eqref{Minimax2} in the class of procedures with given LCPFA $\Hc(\beta,\ell,m)$
defined in \eqref{sec:Prbf.4}. Note that the asymptotic optimality results of the previous section are essential, 
since asymptotic optimality in class $\Hc(\beta,\ell,m)$ is obtained by imbedding this class in class $\Delta(\alpha,\varrho)$ with specially selected parameters $\varrho$ and $\alpha$.

\subsection{The non-i.i.d.\ case}

For any  $0<\beta<1$, $m \ge 1$, and $\ell \ge 1$, define
\begin{equation}\label{sec:Absrsk.1-0}
\alpha_{1}= \alpha_{1}(\beta,m) =\beta+(1-\varrho_{1,\beta})^{m+1}
\end{equation}
and
\begin{equation}\label{sec:Cnrsk.1}
\alpha_{2}= \alpha_{2}(\beta,\ell,m)= \frac{\beta(1-\varrho_{2,\beta})^{\ell+m}} {1+\beta},
\end{equation}
where $\varrho_{2,\beta}=\check{\delta}_{\beta}\,\varrho_{1,\beta}$ and the functions $0<\varrho_{1,\beta}<1$ and $0<\check{\delta}_{\beta} <1$ are such that
\begin{equation}\label{sec:Absrsk.1-01}
\lim_{\beta\to 0}\,\left( \varrho_{1,\beta}+\check{\delta}_{\beta}\right)=0,
\quad\quad \lim_{\beta\to 0}\,\frac{\vert\log\varrho_{1,\beta}\vert+\vert\log\check{\delta}_{\beta}\vert}{\vert \log\beta\vert}=0\,.
\end{equation}
For example, we can take
\begin{equation*}
\varrho_{1,\beta}=\frac{1}{1+|\log \beta|}, \quad \check{\delta}_{\beta}= \frac{\check{\delta}^{*}}{1+|\log \beta|}\quad\mbox{with}\quad  0<\check{\delta}^{*}<1 .
\end{equation*}

To find asymptotic lower bounds for the problems  \eqref{sec:Prbf.5-0}  and \eqref{sec:Prbf.5} in addition to condition $(\A_{1})$ we impose the following condition related to the growth of the window 
size $m$ in the LCPFA: \vspace{3mm}

\noindent $(\H_1)$ {\em  The size of the window $m$ in \eqref{sec:Absrsk.1-0}  is a function of $\beta$, i.e. $m=m_{\beta}$,  such that
\begin{equation}\label{sec:Absrsk.4}
\lim_{\beta\to 0}\, \frac{|\log \alpha_{1,\beta}|}{|\log\beta|}=1 ,
\end{equation}
where  $\alpha_{1,\beta}=\alpha_{1}(\beta,m_{\beta})$.}

The following theorem establishes asymptotic lower bounds. 

\begin{theorem} \label{Th.sec:Cnrsk.1} 
 Assume  that  conditions $(\A_{1})$ and $(\H_{1})$ hold. Then, for any $\ell \ge 1$,  $\nu\in \Zbb_+$, $\theta\in\Theta$, and $r\ge 1$ 
\begin{align} \label{sec:Cnrsk.5}
\liminf_{\beta\to 0} \frac{1}{|\log\beta|^r} \inf_{\tau\in \Hc(\beta,\ell,m_\beta)}\, \sup_{\nu\ge 0} \, \Rc^r_{\nu,\theta}(\tau)\,  \ge 
\liminf_{\beta\to 0} \frac{1}{|\log\beta|^r} \inf_{\tau\in \Hc(\beta,\ell,m_\beta)}\, \Rc^r_{\nu,\theta}(\tau) 
\ge \frac{1}{I_\theta^r} .
\end{align}
\end{theorem}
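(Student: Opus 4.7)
The plan is to reduce Theorem~\ref{Th.sec:Cnrsk.1} to the Bayesian lower bound \eqref{LBRnu} by embedding $\Hc(\beta,\ell,m_\beta)$ into the Bayesian class $\Delta(\alpha_{1,\beta},\varrho_{1,\beta})$ with the geometric prior whose parameter $\varrho_{1,\beta}$ satisfies \eqref{sec:Absrsk.1-01}. The first inequality in \eqref{sec:Cnrsk.5} is immediate: for any fixed $\nu\in\Zbb_+$ and any stopping time $\tau$, one has $\sup_{\nu'\ge 0}\Rc^r_{\nu',\theta}(\tau)\ge \Rc^r_{\nu,\theta}(\tau)$, and taking $\inf_{\tau\in \Hc(\beta,\ell,m_\beta)}$ followed by $\liminf_{\beta\to 0}|\log\beta|^{-r}$ preserves this. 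So only the lower bound for the pointwise risk remains.

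The core step is the embedding $\Hc(\beta,\ell,m_\beta)\subseteq \Delta(\alpha_{1,\beta},\varrho_{1,\beta})$. Fix $\tau\in \Hc(\beta,\ell,m_\beta)$; since $\tau\ge 1$, only terms with $k\ge 1$ enter $\PFA(\tau)$. Splitting the weighted PFA at $k=m_\beta$ gives
\[
\PFA(\tau)=\sum_{k=1}^{m_\beta}\varrho_{1,\beta}(1-\varrho_{1,\beta})^k\Pb_\infty(\tau\le k)+\sum_{k=m_\beta+1}^{\infty}\varrho_{1,\beta}(1-\varrho_{1,\beta})^k\Pb_\infty(\tau\le k).
\]
For every $k\le m_\beta$, the LCPFA constraint applied at $k=1$ (admissible since $\ell\ge 1$) yields $\Pb_\infty(\tau\le k)\le \Pb_\infty(\tau<1+m_\beta)\le \beta\,\Pb_\infty(\tau\ge 1)\le \beta$, so the first sum is bounded by $\beta$. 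The second sum is dominated by its unconditional mass $\sum_{k=m_\beta+1}^{\infty}\varrho_{1,\beta}(1-\varrho_{1,\beta})^k=(1-\varrho_{1,\beta})^{m_\beta+1}$. Adding the two bounds yields $\PFA(\tau)\le \beta+(1-\varrho_{1,\beta})^{m_\beta+1}=\alpha_{1,\beta}$, which is the desired embedding.

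Now I invoke the Bayesian lower bound. By \eqref{sec:Absrsk.1-01}, the parameter $\varrho_{1,\beta}$ vanishes as $\beta\to 0$, and by $(\H_{1})$ we have $\alpha_{1,\beta}\to 0$ with $|\log\alpha_{1,\beta}|\sim |\log\beta|$. Hence under $(\A_{1})$, inequality \eqref{LBRnu} (applied with $\alpha=\alpha_{1,\beta}$) gives
\[
\inf_{\tau\in \Hc(\beta,\ell,m_\beta)}\Rc^r_{\nu,\theta}(\tau)\ge \inf_{\tau\in \Delta(\alpha_{1,\beta})}\Rc^r_{\nu,\theta}(\tau)\ge \brc{\frac{|\log\alpha_{1,\beta}|}{I_\theta}}^{r}(1+o(1)),\quad \beta\to 0,
\]
after which dividing by $|\log\beta|^{r}$ and passing to $\liminf$ produces the bound $I_\theta^{-r}$. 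The main difficulty is not algebraic but lies in the proper \emph{calibration} of $\varrho_{1,\beta}$ and $m_\beta$: the prior must be diffuse enough (\eqref{sec:Absrsk.1-01}) for \eqref{LBRnu} to be asymptotically tight, while the residual term $(1-\varrho_{1,\beta})^{m_\beta+1}$ inside $\alpha_{1,\beta}$ must decay at least as fast as $\beta$ on the logarithmic scale---this is exactly what $(\H_{1})$ guarantees. Once this calibration is granted, the remaining work reduces to the elementary PFA splitting above and a direct appeal to the Bayesian lower bound.
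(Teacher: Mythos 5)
Your proof is correct and follows essentially the same route as the paper: you establish the inclusion $\Hc(\beta,\ell,m_\beta)\subseteq \Delta(\alpha_{1,\beta},\varrho_{1,\beta})$ by splitting the weighted PFA at $k=m_\beta$, bounding the head by $\beta$ via the LCPFA constraint at $k=1$ and the tail by $(1-\varrho_{1,\beta})^{m_\beta+1}$, and then invoke the Bayesian lower bound \eqref{LBRnu} together with condition $(\H_1)$ to convert $|\log\alpha_{1,\beta}|$ into $|\log\beta|$. This matches the paper's argument step for step, including the implicit use of $\alpha_{1,\beta}<1$ for small $\beta$, which your appeal to $(\H_1)$ covers.
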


\proof
First, recall that under condition $(\A_1)$ the lower bounds \eqref{LBRnu} hold for any $r\ge1$ and $\nu\ge 0$.
Second, we show that for any $0<\beta<1$,  $m \ge |\log (1-\beta)|/[|\log (1-\varrho_{1,\beta})|]- 1:=m_0$ and $\ell \ge 1$, the following inclusion holds:  
\begin{equation}\label{Incl1}
\Hc(\beta,\ell,m) \subseteq \Delta(\alpha_{1},\varrho_{1,\beta}) ,
\end{equation}
where $\alpha_1=\alpha_1(\beta,m) < 1$ for every $m > m_0$ and $\beta\in(0,1)$. Indeed, let $\tau$ be from $\Hc(\beta, \ell,m)$. 
Then, using definition of class $\Hc(\beta, \ell,m)$, we obtain that $\Pb_{\infty}(\tau \le m) \le \beta$. 
Therefore,  taking in \eqref{sec:Prbf.1} $\varrho=\varrho_{1,\beta}$, we obtain
\begin{align*}
\sum_{k= 0}^\infty \,\pi_{k}(\varrho_{1,\beta})\, \Pb_{\infty}\left(\tau \le k\right) &= \sum^{m}_{k=0}\,\pi_{k}(\varrho_{1,\beta})\,
\Pb_{\infty}\left(\tau \le  k\right)+ \sum^{\infty}_{k=m+1}\,\pi_{k}(\varrho_{1,\beta})\, \Pb_{\infty}\left(\tau \le k\right)
\\
&\le \beta+\sum^{\infty}_{k= m+1}\,\pi_{k}(\varrho_{1,\beta}) = \beta+\left(1-\varrho_{1,\beta}\right)^{m+1} =\alpha_{1},
\end{align*}
i.e., $\tau\in \Delta(\alpha_{1},\varrho_{1,\beta})$ where $\alpha_1<1$ for $m >m_0$.

Inclusion \eqref{Incl1} implies that for all $\nu\ge 0$  and for a sufficiently small $\beta>0$
 $$
\inf_{\tau\in \Hc(\beta,\ell,m)}\,\Rc^r_{\nu,\theta}(\tau)\,\ge \inf_{\tau\in \Delta(\alpha_{1,\beta},\varrho_{1,\beta})}\,\Rc^r_{\nu,\theta}(\tau).
$$ 
Now, lower bounds \eqref{sec:Cnrsk.5} follow from lower bounds \eqref{LBRnu} and  condition $(\H_{1})$. 
\endproof

To establish asymptotic optimality properties of the WSR procedure with respect to the risks $\Rc_{\nu,\theta}^r(\tau)$ (for all $\nu\ge 0$) and $\sup_{\nu\ge0}\Rc_{\nu,\theta}^r(\tau)$ 
in class $\Hc(\beta,\ell,m)$ we need the uniform  $r$-complete convergence condition $(\A_\zs{2}(r))$ as well as  the following condition: \vspace{3mm}

\noindent $(\H_{2})$  {\em 
Parameters $\ell$ and $m$ are functions of $\beta$, i.e. $\ell=\ell_{\beta}$ and $m=m_{\beta}$, such that
\begin{equation}\label{sec:Cnrsk.6}
\lim_{\beta\to 0}\frac{ |\log\alpha_{2,\beta}|}{|\log\beta|}=1, \quad\quad \lim_{\beta\to0}  \alpha_{2,\beta} \, \varrho_{2,\beta}\,  k_\beta^* =0,
\end{equation}
where $\alpha_{2,\beta}=\alpha_{2}(\beta,\ell_{\beta},m_\beta)$ and $k_\beta^*=\ell_\beta+m_\beta$.} 

The conditions  \eqref{sec:Absrsk.4} and \eqref{sec:Cnrsk.6}  hold, for example, if
\begin{equation*}
m_{\beta}=\lfloor \vert\log\beta\vert/\varrho_{1,\beta}\rfloor , \quad\quad \ell_{\beta}=\check{\varkappa}\,m_{\beta}, \quad \check{\varkappa} >0.
\end{equation*}

Denote by $T_\beta$ the WSR procedure $T_{a_\beta}$  with threshold $a_\beta$ given by 
\begin{equation}\label{sec:Cnrsk.7}
 a_{\beta}= \log \brc{\frac{1-\alpha_{2,\beta}}{\varrho_{2,\beta}\alpha_{2,\beta}}}. 
\end{equation}

\begin{theorem}\label{Th.sec:Cnrsk.2} 
If conditions  $(\H_{1})$ and $(\H_{2})$ hold, then, for any $0<\beta<1$, the WSR procedure $T_\beta$  with threshold $a_\beta$ given by \eqref{sec:Cnrsk.7} belongs to 
class $\Hc(\beta,\ell_\beta,m_\beta)$. Assume in addition that conditions $(\A_{1})$ and $(\A_{2}(r))$ are satisfied. Then 
\begin{equation}\label{sec:SRAO2}
  \inf_{\tau\in\Hc(\beta,\ell_\beta,m_\beta)}\, \Rc_{\nu,\theta}^r(\tau)~ \sim~ \brc{\frac{|\log \beta|}{I_\theta}}^r ~ \sim ~  \Rc_{\nu,\theta}^r(T_{\beta}),  \quad  \nu\in \Zbb_+, ~\theta\in\Theta
\end{equation}
and
\begin{equation}\label{sec:SRAO2b}
 \inf_{\tau\in\Hc(\beta,\ell_\beta,m_\beta)}\, \max_{0\le \nu\le k^{*}_\beta} \Rc_{\nu,\theta}^r(\tau)~ \sim~ \brc{\frac{|\log \beta|}{I_\theta}}^r  ~ \sim ~
 \max_{0\le \nu\le k^{*}_\beta} \Rc_{\nu,\theta}^r(T_{\beta}), \quad\theta\in\Theta.
\end{equation}
Therefore, the WSR procedure $T_\beta$ is first-order  asymptotically pointwise optimal and minimax in class $\Hc(\beta,\ell_\beta,m_\beta)$, minimizing moments of the detection delay
up to order $r$ for all parameter values $\theta\in\Theta$.
\end{theorem}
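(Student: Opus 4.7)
My strategy is to sandwich the local-PFA class $\Hc(\beta,\ell_\beta,m_\beta)$ between two Bayesian weighted-PFA classes and then reduce everything to the Bayesian optimality results of Theorem~\ref{Th:Bayesopt}. More precisely, I will exploit the chain of inclusions
\begin{equation*}
  \Delta(\alpha_{2,\beta},\varrho_{2,\beta}) \ \subseteq \ \Hc(\beta,\ell_\beta,m_\beta) \ \subseteq \ \Delta(\alpha_{1,\beta},\varrho_{1,\beta}),
\end{equation*}
where the second (easier) inclusion has already been established inside the proof of Theorem~\ref{Th.sec:Cnrsk.1} and drives the lower bound, while the first inclusion, combined with Lemma~\ref{Lem:PFASR}, simultaneously places $T_\beta$ inside $\Hc(\beta,\ell_\beta,m_\beta)$ and allows Theorem~\ref{Th:Bayesopt} to supply the matching upper bound.

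The crucial step is the first inclusion. Fix an arbitrary $\tau\in\Delta(\alpha,\varrho)$. From
\[
\alpha \ \ge \ \PFA(\tau) \ \ge \ \sum_{i\ge j}\varrho(1-\varrho)^{i}\,\Pb_\infty(\tau\le i) \ \ge \ \Pb_\infty(\tau\le j)(1-\varrho)^{j},
\]
one obtains the pointwise bound $\Pb_\infty(\tau\le j)\le \alpha/(1-\varrho)^{j}$ valid for every $j\ge 0$. Applying this with $j=k+m-1$ in the numerator and $j=k-1$ in the denominator of $\Pb_\infty(k\le\tau<k+m\mid\tau\ge k)$, and using monotonicity in $k\le\ell$, yields the uniform bound
\begin{equation*}
  \sup_{1\le k\le\ell}\Pb_\infty\bl(k\le\tau<k+m\mid\tau\ge k\br) \ \le \ \frac{\alpha/(1-\varrho)^{\ell+m-1}}{1-\alpha/(1-\varrho)^{\ell-1}}.
\end{equation*}
Substituting $\alpha=\alpha_{2,\beta}=\beta(1-\varrho_{2,\beta})^{\ell+m}/(1+\beta)$ and $\varrho=\varrho_{2,\beta}$ collapses the right-hand side to $\beta(1-\varrho_{2,\beta})/[(1+\beta)-\beta(1-\varrho_{2,\beta})^{m+1}]$, which is $\le\beta$ by a one-line algebraic check. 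Moreover, since $\varrho_{2,\beta}>\alpha_{2,\beta}$ for all small enough $\beta$ (a direct consequence of \eqref{sec:Absrsk.1-01}), the threshold $a_\beta$ in \eqref{sec:Cnrsk.7} exceeds the canonical one $\log[(1-\varrho_{2,\beta})/(\varrho_{2,\beta}\alpha_{2,\beta})]$ of Lemma~\ref{Lem:PFASR}, so $T_\beta\in\Delta(\alpha_{2,\beta},\varrho_{2,\beta})\subseteq\Hc(\beta,\ell_\beta,m_\beta)$.

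For the asymptotic equivalences \eqref{sec:SRAO2}--\eqref{sec:SRAO2b}, the lower bounds are immediate from Theorem~\ref{Th.sec:Cnrsk.1} (valid under $(\A_1)$ and $(\H_1)$); for the minimax statement one additionally uses the trivial $\max_{0\le\nu\le k_\beta^*}\Rc^r_{\nu,\theta}(\tau)\ge\Rc^r_{0,\theta}(\tau)$. For the matching upper bounds on $\Rc^r_{\nu,\theta}(T_\beta)$ and $\max_{0\le\nu\le k_\beta^*}\Rc^r_{\nu,\theta}(T_\beta)$, I would apply both parts of Theorem~\ref{Th:Bayesopt} to $T_\beta$ viewed as an element of $\Delta(\alpha_{2,\beta},\varrho_{2,\beta})$. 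The required hypotheses are transported as follows: $(\H_2)$ together with \eqref{sec:Absrsk.1-01} yields \eqref{sec:Bay.6} with $(\alpha,\varrho)$ replaced by $(\alpha_{2,\beta},\varrho_{2,\beta})$; the second half of $(\H_2)$ is precisely \eqref{sec:Bay.7} for $k_\beta^{*}=\ell_\beta+m_\beta$; and $a_\beta\sim|\log\alpha_{2,\beta}|\sim|\log\beta|$ by the first half of $(\H_2)$. Theorem~\ref{Th:Bayesopt} then delivers $\Rc^r_{\nu,\theta}(T_\beta)\sim(|\log\beta|/I_\theta)^{r}$ for every fixed $\nu\ge 0$, as well as the same asymptotic for $\max_{0\le\nu\le k_\beta^*}\Rc^r_{\nu,\theta}(T_\beta)$, closing the sandwich.

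The principal technical hurdle is the first inclusion: the definition \eqref{sec:Cnrsk.1} of $\alpha_{2,\beta}$, with its precise factor $(1-\varrho_{2,\beta})^{\ell+m}$, is engineered exactly so that the elementary Markov-type estimates above collapse to $\beta$ with essentially no slack. Everything else is bookkeeping that transports the rate regimes encoded in $(\H_1)$--$(\H_2)$ into the hypotheses of Theorem~\ref{Th:Bayesopt} through the intermediate Bayesian class, and no further probabilistic estimate is needed.
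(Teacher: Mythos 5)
Your proposal is correct and takes essentially the same route as the paper: the sandwich $\Delta(\alpha_{2,\beta},\varrho_{2,\beta})\subseteq \Hc(\beta,\ell_\beta,m_\beta)\subseteq \Delta(\alpha_{1,\beta},\varrho_{1,\beta})$, Lemma~\ref{Lem:PFASR} for membership of $T_\beta$, Theorem~\ref{Th.sec:Cnrsk.1} for the lower bounds, and Theorem~\ref{Th:Bayesopt} for the matching upper bounds after transporting $(\H_1)$--$(\H_2)$ into \eqref{sec:Bay.6}--\eqref{sec:Bay.7}. Your only deviations are cosmetic and harmless --- keeping the $k$-dependence in the inclusion estimate instead of bounding numerator and denominator by $\Pb_\infty(\tau<k^{*}_\beta)$ as the paper does, and citing Theorem~\ref{Th:Bayesopt}(ii) directly rather than re-deriving the minimax upper bound from inequality (A.25) of \citep{TartakovskyIEEEIT2018} --- while your explicit check that $\varrho_{2,\beta}>\alpha_{2,\beta}$ for small $\beta$, so that the threshold \eqref{sec:Cnrsk.7} dominates the canonical one of Lemma~\ref{Lem:PFASR}, is if anything more careful than the paper's own appeal to that lemma.
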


\proof
First note that the conditions \eqref{sec:Absrsk.1-01} and \eqref{sec:Cnrsk.6} imply the properties \eqref{sec:Bay.6}--\eqref{sec:Bay.7}. 
Now, by Lemma~\ref{Lem:PFASR}, the WSR procedure $T_{a_\alpha(\varrho)}$ with threshold $a_\alpha(\varrho)=\log[(1-\alpha)/\varrho\alpha]$ belongs to class $\Delta(\alpha,\varrho)$ 
for any $0<\alpha, \varrho<1$. Hence, $T_{\beta}\in \Delta(\alpha_{2,\beta},\varrho_{2,\beta})$.  Note that for any $0<\beta<1$,  $m \ge 1$ and $\ell \ge 1$, 
the following inclusion holds:  
\begin{equation}\label{sec:Cnrsk.2}
\Delta(\alpha_{2,\beta},\varrho_\zs{2,\beta}) \subseteq \Hc(\beta,\ell,m) .
\end{equation}
Indeed, by definition of class $\Delta(\alpha,\varrho)$, we have that for any $0<\alpha, \varrho<1$ and any $i \ge 1$
\[
\alpha \ge \varrho \sum_{k=i}^\infty (1-\varrho)^k \Pb_\infty(\tau \le k) \ge \varrho \Pb_\infty(\tau \le i) \sum_{k=i}^\infty (1-\varrho)^k = \Pb_\infty(\tau \le i) (1-\varrho)^i,
\]
which implies that
\[
\sup_{\tau \in \Delta(\alpha,\varrho)} \Pb_\infty(\tau \le k^{*}_\beta)  \le \alpha (1-\varrho)^{-k^{*}_\beta}.
\] 
Let $\tau\in \Delta(\alpha_{2,\beta},\varrho_{2,\beta})$. Then, taking into account the latter inequality and using definition of $\alpha_{2,\beta}$ in \eqref{sec:Cnrsk.1}, we obtain that
\begin{align*}
\sup_{1\le k\le \ell_\beta}\, \Pb_{\infty}(\tau < k+m_\beta | \tau \ge k)&\le \sup_{1\le k\le \ell_\beta}\,\frac{\Pb_{\infty}\left(\tau < k+m_\beta\right)} {\Pb_{\infty}\left( \tau \ge k\right)}
 \le \frac{\Pb_{\infty}\left(\tau < k^{*}_\beta\right)} {1-\Pb_{\infty}\left( \tau < k^{*}_\beta\right)}
\le \frac{\alpha_{2,\beta}\,(1-\varrho_{2,\beta})^{-k^{*}_\beta}} {1-\alpha_{2,\beta}\,(1-\varrho_{2,\beta})^{-k^{*}_\beta}} =\beta,
\end{align*}
i.e., $\tau$ belongs to $\Hc(\beta,\ell_\beta,m_\beta)$. 

Using inclusion~\eqref{sec:Cnrsk.2}, we obtain that  the stopping time $T_{\beta}$ belongs to $\Hc(\beta,\ell_\beta,m_\beta)$ for any $0<\beta<1$. 

Next, in view of definition of $a_{\beta}$ in \eqref{sec:Cnrsk.7} and of the form of the function $\varrho_{2,\beta}$ in \eqref{sec:Absrsk.1-01}
we obtain, using condition $(\H_{2}$), that $\lim_{\beta\to 0}\, a_{\beta}/|\log\beta|=1$. Thus, by \eqref{ASapproxBayes} in Theorem~\ref{Th:Bayesopt},
\[
\lim_{\beta\to\infty}\, \frac{1}{|\log \beta|^r} \, \Rc_{\nu,\theta}^r(T_\beta) = \frac{1}{I_\theta^r}, \quad \nu \in \Zbb_+, ~  \theta\in\Theta.
\]
Comparing to the lower bound \eqref{sec:Cnrsk.5} implies \eqref{sec:SRAO2}.

To prove \eqref{sec:SRAO2b} it suffices to show that
\begin{equation}\label{UBbeta}
\limsup_{\beta\to0} \frac{\max_{0\le \nu \le k^{*}_\beta}\,\Rc_{\nu,\theta}^r(T_{\beta})}{|\log\beta|^r} \le \frac{1}{I^r_\theta},  \quad \theta\in\Theta.
\end{equation}
Note that
$$
\max_{0\le \nu \le k^{*}_\beta}\, \Rc_{\nu,\theta}^r(T_{\beta}) \le 
\dfrac{\max_{0\le \nu \le k^{*}_\beta}\,  \EV_{\nu,\theta}\left[(T_{\beta}-\nu)^+\right]^r}{\min_{0\le \nu \le k^{*}_\beta}\,\Pb_{\infty}\left(T_{\beta}> \nu \right)},
$$
where as $\beta\to0$
\[
\min_{0\le \nu \le k^{*}_\beta}\,\Pb_{\infty}\left(T_{\beta}> \nu \right) = \Pb_{\infty}\left(T_{\beta}> k^*_\beta \right) \to 1 .
\]
Also, by inequality (A.25) in \citep{TartakovskyIEEEIT2018}, for an arbitrary $0<\varepsilon < I_\theta$,
\begin{equation*}
\sup_{\nu\ge 0} \EV_{\nu,\theta}\brcs{(T_\beta-\nu)^+}^r \le  
 \brc{1+\frac{a_\beta}{I_\theta-\varepsilon}}^r +  r2^{r-1} \,  \,\Upsilon_r(\varepsilon,\theta).
 \end{equation*}
Note that the second condition in \eqref{sec:Absrsk.1-01} and the first condition in \eqref{sec:Cnrsk.6} imply that $a_\beta \sim |\log \beta|$ as $\beta\to0$.
As a result, we obtain as $\beta\to0$
\begin{align*}
\max_{0\le \nu \le k^{*}_\beta}\,\Rc_{\nu,\theta}^r(T_{\beta})  \le \dfrac{\sup_{\nu\ge 0 }\,  \EV_{\nu,\theta}\left[(T_{\beta}-\nu)^+]^r\right]}{\Pb_{\infty}\left(T_{\beta}> k^*_\beta \right)}
 = \brc{\frac{|\log \beta|}{I_\theta}}^r(1+o(1)) .
\end{align*}
This obviously yields  the upper bound \eqref{UBbeta} and the proof is complete.
\endproof

The following theorem establishes minimax properties of the WSR procedure with respect to the risk 
\[
\sup_{\theta\in\Theta_1}  \max_{0\le \nu \le k^{*}_\beta} I_\theta^r \Rc_{\nu,\theta}^r(\tau)
\] 
for compact subsets $\Theta_1$ of $\Theta$. It is absolutely similar to Theorem~\ref{Th:MinMax} and its proof follows almost immediately from Theorem~\ref{Th:MinMax} and 
Theorem~\ref{Th.sec:Cnrsk.2}, and for this reason it is omitted. 

\begin{theorem}\label{Th:MinMax2}
 Assume that the right-tail and the left-tail conditions $(\A_{1})$ and \eqref{rLeft} are satisfied  for some  $0<I_\theta<\infty$ and that conditions  $(\H_{1})$ and $(\H_{2})$ are satisfied as well. Then 
\begin{equation*}
  \inf_{\tau\in \Hc(\beta,\ell_\beta,m_\beta)}\,  \sup_{\theta\in\Theta_1}  \max_{0\le \nu\le k^{*}_\beta} I_\theta^r \, \Rc_{\nu,\theta}^r(\tau) \sim |\log\beta|^r \sim   
  \sup_{\theta\in\Theta_1}  \max_{0\le \nu\le k^{*}_\beta}  I_\theta^r \, \Rc_{\nu,\theta}^r(T_{\beta})  \quad \text{as}~ \beta\to0.
\end{equation*}
\end{theorem}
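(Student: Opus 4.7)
The plan is to establish the double-sided asymptotic in Theorem~\ref{Th:MinMax2} by reducing to the ingredients already assembled in Theorems~\ref{Th:MinMax} and~\ref{Th.sec:Cnrsk.2}. The argument splits naturally into a matching lower bound and upper bound, with the lower bound essentially free from the pointwise lower bound, and the upper bound requiring careful uniform control over $\theta\in\Theta_1$.

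For the lower bound, I would fix any single pair $\theta_0\in\Theta_1$, $\nu_0\in\Zbb_+$ and simply note that for every $\tau\in\Hc(\beta,\ell_\beta,m_\beta)$,
\[
\sup_{\theta\in\Theta_1}\max_{0\le\nu\le k^{*}_\beta} I_\theta^r\,\Rc^r_{\nu,\theta}(\tau)\ \ge\ I_{\theta_0}^r\,\Rc^r_{\nu_0,\theta_0}(\tau).
\]
Taking the infimum over $\tau\in\Hc(\beta,\ell_\beta,m_\beta)$ and invoking Theorem~\ref{Th.sec:Cnrsk.1} (available under $(\A_1)$ and $(\H_1)$) gives $\inf_\tau I_{\theta_0}^r\,\Rc^r_{\nu_0,\theta_0}(\tau)\ge|\log\beta|^r(1+o(1))$, from which the desired lower bound $|\log\beta|^r(1+o(1))$ follows.

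For the upper bound on $\sup_{\Theta_1}\max_\nu I_\theta^r\,\Rc^r_{\nu,\theta}(T_\beta)$, I would follow the pattern of the proof of Theorem~\ref{Th.sec:Cnrsk.2}: write $\Rc^r_{\nu,\theta}(T_\beta)=\EV_{\nu,\theta}[(T_\beta-\nu)^+]^r/\Pb_\infty(T_\beta>\nu)$, bound the denominator below by $\Pb_\infty(T_\beta>k^{*}_\beta)\to 1$ (which holds under $(\H_2)$ and the threshold choice \eqref{sec:Cnrsk.7}), and apply inequality (A.25) of \citep{TartakovskyIEEEIT2018} to control the numerator uniformly in $\nu$. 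Multiplying by $I_\theta^r$ then yields
\[
I_\theta^r\sup_{\nu\ge0}\EV_{\nu,\theta}\brcs{(T_\beta-\nu)^+}^r\ \le\ \brc{I_\theta+\frac{I_\theta\,a_\beta}{I_\theta-\varepsilon}}^r + r\,2^{r-1}\,I_\theta^r\,\Upsilon_r(\varepsilon,\theta)
\]
for every $0<\varepsilon<\inf_{\Theta_1}I_\theta$.

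The main obstacle will be taking $\sup_{\theta\in\Theta_1}$ of this bound uniformly and showing it remains $|\log\beta|^r(1+o(1))$. This is precisely where compactness of $\Theta_1$ and assumption \eqref{rLeft} come in, exactly as in Theorem~\ref{Th:MinMax}: continuity of $I(\cdot)$ together with $\inf_{\Theta_1} I_\theta>0$ gives $\sup_{\Theta_1}I_\theta/(I_\theta-\varepsilon)\to 1$ as $\varepsilon\to 0$, so the first term is bounded by $|\log\beta|^r(1+o_\varepsilon(1))(1+o(1))$; meanwhile $\sup_{\Theta_1}I_\theta^r\,\Upsilon_r(\varepsilon,\theta)<\infty$ keeps the remainder term $O(1)$. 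Since $a_\beta\sim|\log\beta|$ by construction of \eqref{sec:Cnrsk.7}, letting first $\beta\to 0$ and then $\varepsilon\to 0$ produces the matching upper bound and completes the proof.
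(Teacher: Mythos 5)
Your proof is correct and takes essentially the same route the paper intends: the authors omit the proof of Theorem~\ref{Th:MinMax2} precisely because it is the combination you carry out, namely the lower bound from Theorem~\ref{Th.sec:Cnrsk.1} (where multiplying by $I_{\theta_0}^r$ cancels the $1/I_{\theta_0}^r$, making the bound parameter-free) and the upper bound from inequality (A.25) of \citep{TartakovskyIEEEIT2018} made uniform over the compact $\Theta_1$ via \eqref{rLeft}, together with the facts $T_\beta\in\Hc(\beta,\ell_\beta,m_\beta)$, $\Pb_\infty(T_\beta>k^*_\beta)\to1$ and $a_\beta\sim|\log\beta|$ established in Theorem~\ref{Th.sec:Cnrsk.2}. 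Your write-up supplies exactly the details the paper leaves implicit, with no gaps.
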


\subsection{The case of LLR with independent increments}\label{ssec:LLRindincr2}

As in Subsection~\ref{ssec:LLRindincr}, we now consider a particular (still quite general) case where the LLR process has independent increments. The following theorem is similar to Theorem~\ref{Th:FOAOindep}.

\begin{theorem}\label{Th:FOAOindep2}
 Assume that the LLR process $\{Z_{k+n}^k(\theta)\}_{n\ge 1}$ has independent, 
not necessarily identically distributed increments under $\Pb_{k,\theta}$, $k \ge 0$.  Suppose that conditions  $(\H_{1})$, $(\H_{2})$, $(\A_1)$, and \eqref{probLeft} are satisfied.  
Then asymptotic relations \eqref{sec:SRAO2} and \eqref{sec:SRAO2b} 
hold for all $r>0$, i.e.,  the WSR procedure $T_\beta$ is first-order asymptotically pointwise optimal and minimax in class $\Hc(\beta,\ell_\beta,m_\beta)$ with respect to all positive moments of the detection delay
for all $\theta\in\Theta$.
\end{theorem}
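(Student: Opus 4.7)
The plan is to closely follow the proof of Theorem~\ref{Th.sec:Cnrsk.2}, substituting the Bayesian asymptotics supplied by Theorem~\ref{Th:FOAOindep} in place of those supplied by Theorem~\ref{Th:Bayesopt}. Two ingredients from that proof carry over unchanged, since neither relies on any statistical structure of the observations: Lemma~\ref{Lem:PFASR} and the class inclusion $\Delta(\alpha_{2,\beta},\varrho_{2,\beta})\subseteq \Hc(\beta,\ell_\beta,m_\beta)$. Together they yield $T_\beta\in \Hc(\beta,\ell_\beta,m_\beta)$ for every $0<\beta<1$. The threshold~\eqref{sec:Cnrsk.7}, combined with \eqref{sec:Absrsk.1-01} and the first half of~\eqref{sec:Cnrsk.6}, gives $a_\beta\sim|\log\alpha_{2,\beta}|\sim|\log\beta|$ as $\beta\to 0$, which is all that is needed to transfer the Bayesian asymptotics into the LCPFA class.

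For the lower bound, I would first observe that Theorem~\ref{Th.sec:Cnrsk.1} remains valid for every $r>0$, not merely for $r\ge 1$, under the single condition $(\A_1)$. Indeed, the proof of the Bayesian lower bound in \citep{TartakovskyIEEEIT2018} produces the probabilistic estimate $\Pb_{\nu,\theta}(\tau-\nu\ge L_\beta \mid \tau>\nu)\to 1$ with $L_\beta=(1-\varepsilon)|\log\beta|/I_\theta$, so Markov's inequality $\Rc^r_{\nu,\theta}(\tau)\ge L_\beta^r\,\Pb_{\nu,\theta}(\tau-\nu\ge L_\beta\mid \tau>\nu)$ delivers the bound $\Rc^r_{\nu,\theta}(\tau)\ge L_\beta^r(1+o(1))$ for arbitrary $r>0$. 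The passage from the Bayesian class $\Delta(\alpha_{1,\beta},\varrho_{1,\beta})$ to $\Hc(\beta,\ell_\beta,m_\beta)$ is then exactly inclusion~\eqref{Incl1}, combined with condition $(\H_1)$.

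For the upper bound, the work has essentially been done inside Theorem~\ref{Th:FOAOindep}: the cycle argument \eqref{ExpkTAplus}--\eqref{Momineq} exploits independence of the LLR increments to produce the uniform bound $\sup_{\nu\ge 0}\EV_{\nu,\theta}[(T_{a}-\nu)^+]^r\le(a/(I_\theta-\varepsilon))^r(1+o(1))$, valid for every $r>0$ under condition~\eqref{probLeft}. Setting $a=a_\beta\sim|\log\beta|$ and dividing by $\Pb_\infty(T_\beta>\nu)\ge 1-\nu e^{-a_\beta}$, which tends to $1$ uniformly on $\{\nu\le k^*_\beta\}$ by the second half of $(\H_2)$, yields $\Rc^r_{\nu,\theta}(T_\beta)\le(|\log\beta|/I_\theta)^r(1+o(1))$ uniformly on $0\le\nu\le k^*_\beta$. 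Matched with the lower bound of the preceding paragraph, this proves both \eqref{sec:SRAO2} (pointwise) and \eqref{sec:SRAO2b} (minimax over $0\le\nu\le k^*_\beta$).

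I do not expect a genuinely new obstacle. The only analytically nontrivial step, namely the cycle decomposition that replaces the uniform $r$-complete-convergence hypothesis $(\A_2(r))$ by the much weaker in-probability condition~\eqref{probLeft}, is already encapsulated in the proof of Theorem~\ref{Th:FOAOindep}; what remains is the deterministic bookkeeping that links the LCPFA parameters $(\beta,\ell_\beta,m_\beta)$ to the Bayesian parameters $(\alpha_{2,\beta},\varrho_{2,\beta})$ via \eqref{sec:Absrsk.1-0}--\eqref{sec:Cnrsk.1} under $(\H_1)$ and $(\H_2)$, identical to the routine accounting inside Theorem~\ref{Th.sec:Cnrsk.2}.
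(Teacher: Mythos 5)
Your proposal is correct and follows exactly the route the paper intends: the paper's own proof of Theorem~\ref{Th:FOAOindep2} is omitted with the remark that it ``follows almost immediately from Theorem~\ref{Th:FOAOindep} and Theorem~\ref{Th.sec:Cnrsk.2},'' which is precisely your combination of the class inclusions \eqref{Incl1} and \eqref{sec:Cnrsk.2} with the cycle-argument upper bound \eqref{ExpkTAplus}--\eqref{Momineq} under the threshold calibration $a_\beta\sim|\log\beta|$ from $(\H_1)$--$(\H_2)$. Your only addition is to spell out the extension of the lower bound to all $r>0$ via the estimate $\Rc^r_{\nu,\theta}(\tau)\ge L_\beta^r\,\Pb_{\nu,\theta}(\tau-\nu\ge L_\beta\mid\tau>\nu)$, a detail the paper leaves implicit (its displayed bound \eqref{LBRnu} is stated for $r\ge1$) but which is indeed needed for the ``all $r>0$'' claim and is correctly handled in your argument.
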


\begin{proof}
The proof follows almost immediately from Theorem~\ref{Th:FOAOindep} and Theorem~\ref{Th.sec:Cnrsk.2} and is omitted.
\end{proof}

\section{Uniform concentration inequalities and sufficient conditions of asymptotic optimality for Markov processes}
\label{sec:Mrk}
Condition $(\A_1)$ is usually not difficult to check since it follows from the almost sure convergence of the log-likelihood ratio \eqref{sec:MaRe.1}. However, verification of the left-tail $r$-complete-type 
convergence condition $(\A_2(r))$ may be a challenge. In this section, we obtain sufficient conditions for a class of homogeneous Markov processes in order to verify condition $(\A_2(r))$  
in particular examples.

Let  $(X^{\theta}_\zs{n})_\zs{n\ge 1}$ be a time homogeneous  Markov process with values in a measurable space $(\Xc,\Bc)$ defined by a family
of the transition probabilities $(P^{\theta}(x,A))_\zs{\theta\in\Theta}$ for some fixed parameter set $\Theta\subseteq \bbr^{p}$.  In the sequel, we denote by $\E^{\theta}_\zs{x}(\cdot)$   the  expectation 
with respect to this probability. In addition, we assume that this process is geometrically ergodic, i.e., \vspace{3mm}

\noindent $(\B_\zs{1})$
{\em For any $\theta\in\Theta$ there exist a probability measure $\lambda$ on $(\Xc,\Bc)$ and the  Lyapunov $\Xc\to [1,\infty)$
function $\V^{\theta}$ with $\lambda^{\theta}(\V^{\theta})<\infty$,  such that for some positives constants $0<R<\infty$ and $\kappa>0$,
$$
\sup_\zs{n\ge 0}\, e^{\kappa n}\, \sup_\zs{x\in\bbr}\,\sup_\zs{\theta\in\Theta}\,\sup_\zs{0<F \le \V^{\theta}}\,\,
\frac{1}{\V^{\theta}(x)}\,\left|\E^{\theta}_\zs{x}\,[F(X_\zs{n})]-\lambda^{\theta}(F)\right|\le\, R\,.
$$
}
Now, for some $\q>0$, we set
\begin{equation}\label{sec:Mrk.3}
\upsilon^{*}_\zs{\q}(x)=\sup_\zs{n\ge 0}\,\sup_\zs{\theta\in\Theta}\,\E^{\theta}_\zs{x}\,\left[\V^{\theta}(X_\zs{n})\right]^{\q}\,.
\end{equation}

\noindent
Let $g$ be a measurable $\Theta\times\Xc\times\Xc\to\bbr$ function
 such that the following integrals exist
\begin{equation}\label{sec:Mrk.1_Int_g}
\wt{g}(\theta,x)=\int_\zs{\Xc}\,g(\theta,y,x)\,P^{\theta}(x,\d y), \quad\quad \lambda^{\theta}(\wt{g}) =\int_\zs{\Xc}\,\wt{g}(u)\,\lambda^{\theta}(\d u)\,.
\end{equation}

\noindent $(\B_\zs{2})$
{\em
Assume that $g$ satisfies the H\"older condition of the power $0<\gamma\le 1$ with respect to the first variable, i.e., there exists a measurable positive $\Xc\times\Xc\to\bbr$ function $h$
such that for any $x,y$ from $\Xc$
\begin{equation}\label{sec:Mrk.3_hh}
\sup_\zs{u\,,\,\theta\in\Theta}\frac{\vert g(u,y,x)-g(\theta,y,x)\vert}{\vert u-\theta\vert^{\gamma}}\le h(y,x)
\end{equation}
and the corresponding integrals $\wt{h}(\theta,x)$ and $\lambda^{\theta}(\wt{h})$ exist for any $\theta\in\Theta$, where $\wt{h}(\theta,x)$ is defined as $\wt{g}$ in \eqref{sec:Mrk.1_Int_g} and $\vert\cdot\vert$ 
is the Euclidean norm in $\bbr^{p}$.
}
\vspace{3mm}

\noindent $(\B_\zs{3})$
{\em Assume that the functions $g$ and $h$ are such that $|\wt{g}(\theta,x)|\le \V^{\theta}(x)$ and $|\wt{h}(\theta,x)|\le \V^{\theta}(x)$ for all $\theta\in\Theta$ and $x\in\Xc$.}

\vspace{2mm}

Now, for any measurable $\Theta\times\Xc\times\Xc\to\bbr$ function $g$
for which there exist the integrals \eqref{sec:Mrk.1_Int_g}, we introduce the deviation processes
\begin{equation}\label{sec:Mrk.1-1}
W^{g}_\zs{n}(u,\theta)=n^{-1}\sum^{n}_\zs{j=1}\,g(u,X_\zs{j},X_\zs{j-1}) -\lambda^{\theta}(\wt{g}), \quad\quad \wt{W}^{g}_\zs{n}(\theta)=W^{g}_\zs{n}(\theta,\theta)\,.
\end{equation}

Similarly to \eqref{sec:Mrk.3}, we define for some $\q>0$ 
\begin{equation}\label{sec:Mrk.2}
g^{*}_\zs{\q}(x)=\sup_\zs{n\ge 1}\,\sup_\zs{\theta\in\Theta}\,\E^{\theta}_\zs{x}\,|g(\theta,X_\zs{n},X_\zs{n-1})|^{\q} ,
\quad\quad h^{*}_\zs{\q}(x)=\sup_\zs{n\ge 1}\, \sup_\zs{\theta\in\Theta}\,\E^{\theta}_\zs{x}\,|h(X_\zs{n},X_\zs{n-1})|^{\q}\,.
\end{equation}

Proposition 1 of \citep{PergTarSISP2016} implies the following result.

\begin{proposition} \label{Pr.sec:Mrk.1_theta} 
Assume that conditions $(\B_\zs{1})-(\B_\zs{3})$ hold. Then for any   $\q\ge 2$, for which $\upsilon^{*}_\zs{\q}(x)<\infty$, $g^{*}_\zs{\q}(x)<\infty$, and $h^{*}_\zs{\q}(x)<\infty$, one has
\begin{equation}\label{sec:Mrk.1_theta}
\sup_\zs{n\ge 2}\,n^{\q/2}\,\sup_\zs{\theta\in\Theta}\,\sup_\zs{x\in\Xc}\,\frac{\E^{\theta}_\zs{x}|\wt{W}^{g}_\zs{n}(\theta)|^{\q}+\E^{\theta}_\zs{x}|\wt{W}^{h}_\zs{n}(\theta)|^{\q}}
{\left(1+\upsilon^{*}_\zs{\q}(x)+g^{*}_\zs{\q}(x)+h^{*}_\zs{\q}(x)\right)}<\infty\,.
\end{equation}
\end{proposition}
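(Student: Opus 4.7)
The plan is to reduce $\wt{W}_n^g(\theta)$ to a martingale by a Poisson--equation decomposition associated with the transition kernel $P^\theta$, apply a Burkholder--Rosenthal inequality, and absorb every $\theta$--dependence into the uniform geometric--ergodicity bound $(\B_1)$. Exactly the same program applied to $h$ in place of $g$ handles $\wt{W}_n^h(\theta)$, so I focus on the $g$ piece below.

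First I would split each summand as
\[
g(\theta,X_j,X_{j-1}) - \lambda^\theta(\wt g) = \brc{g(\theta,X_j,X_{j-1}) - \wt g(\theta,X_{j-1})} + \brc{\wt g(\theta,X_{j-1}) - \lambda^\theta(\wt g)}.
\]
The first bracket is already a martingale difference for the filtration $\mc{F}_j = \sigma(X_0,\dots,X_j)$ by the definition of $\wt g$ in \eqref{sec:Mrk.1_Int_g}, with $\q$-th conditional moments dominated by $g_\q^*(X_{j-1})$ in view of \eqref{sec:Mrk.2}. For the second bracket, I would introduce the Poisson solution
\[
U^\theta(x) = \sum_{k=0}^\infty \brc{(P^\theta)^k \wt g(\theta,\cdot)(x) - \lambda^\theta(\wt g)},
\]
which, thanks to $(\B_1)$ applied to the positive and negative parts of $\wt g(\theta,\cdot)$ (legitimate by the domination $|\wt g(\theta,\cdot)|\le \V^\theta$ from $(\B_3)$), converges absolutely and satisfies $|U^\theta(x)| \le R(1-e^{-\kappa})^{-1}\V^\theta(x)$, uniformly in $\theta\in\Theta$ and $x\in\Xc$. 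A telescoping identity then gives
\[
\sum_{j=1}^n \brc{\wt g(\theta,X_{j-1}) - \lambda^\theta(\wt g)} = \sum_{j=1}^n \brc{U^\theta(X_j) - P^\theta U^\theta(X_{j-1})} + U^\theta(X_0) - U^\theta(X_n),
\]
so the second bracket also becomes a martingale plus a boundary correction. Burkholder's (or Rosenthal's) inequality applied to each of the two length-$n$ martingales produces a bound of order $n^{\q/2}\sup_j \E_x^\theta|M_j|^\q$, and the individual increment moments are controlled by $g_\q^*(x)+\upsilon_\q^*(x)$ via $|P^\theta U^\theta|\le C\V^\theta$ combined with \eqref{sec:Mrk.3}. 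The boundary term $U^\theta(X_0)-U^\theta(X_n)$ is dominated in $L^\q$ by $C(1+\upsilon_\q^*(x))$ by the same token. Dividing by $n^\q$ yields the $n^{-\q/2}$ decay asserted in \eqref{sec:Mrk.1_theta}.

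Uniformity in $\theta\in\Theta$ is preserved throughout because the constants $R$ and $\kappa$ supplied by $(\B_1)$ are $\theta$--free, so the Poisson--solution bound $|U^\theta|\le C\V^\theta$ is itself uniform in $\theta$, and every residual $\theta$--dependence flows into the Lyapunov function $\V^\theta$ and is absorbed by the suprema over $\theta\in\Theta$ already built into $\upsilon_\q^*$, $g_\q^*$, and $h_\q^*$ through \eqref{sec:Mrk.3}--\eqref{sec:Mrk.2}. The main obstacle will be precisely this uniform-in-$\theta$ control of the Poisson solution and of the martingale--difference moments: a plain ergodic theorem would yield only $o(1)$ decay rather than the $n^{-\q/2}$ rate required here, so one really needs the strong geometric--ergodicity formulation of $(\B_1)$ with its simultaneous suprema over $\theta$ and over $0<F\le \V^\theta$. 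As the authors remark, the conclusion is a direct consequence of Proposition~1 in \citep{PergTarSISP2016}, and the only real work is to verify that the above decomposition interacts correctly with the uniform suprema defining $\upsilon_\q^*(x)$, $g_\q^*(x)$, and $h_\q^*(x)$.
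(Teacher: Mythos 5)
Your plan is correct, but it follows a genuinely different route from the paper's. The paper gives no self-contained argument: it obtains the proposition as an immediate consequence of Proposition~1 of \citep{PergTarSISP2016}, and the machinery behind that result is the correlation inequality reproduced as Proposition~A.1 of the appendix, namely
\begin{equation*}
\EV\,\Big|\sum^{n}_{j=1} u_j\Big|^{\q}\le (2\q)^{\q/2}\Big(\sum^{n}_{j=1}\check{b}_{j,n}(\q)\Big)^{\q/2},
\qquad
\check{b}_{j,n}(\q)=\Big\{\EV\Big[|u_j|\sum^{n}_{k=j}|\EV(u_k|\Fc_j)|\Big]^{\q/2}\Big\}^{2/\q},
\end{equation*}
applied directly to the dependent summands $u_j=g(\theta,X_j,X_{j-1})-\lambda^{\theta}(\wt{g})$: the tail sums $\sum_{k\ge j}|\EV(u_k|\Fc_j)|$ are geometrically summable by the uniform ergodicity $(\B_1)$ (Theorem~A.1), so each $\check{b}_{j,n}(\q)$ is bounded, uniformly in $j$, $n$ and $\theta$, by a multiple of $1+\upsilon^{*}_{\q}(x)+g^{*}_{\q}(x)+h^{*}_{\q}(x)$, and the $O(n^{\q/2})$ bound for the unnormalized sum follows with no Poisson equation and no martingale. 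Your Gordin-type decomposition (martingale difference plus Poisson corrector plus boundary term, closed by Burkholder--Rosenthal) reaches the same $n^{-\q/2}$ rate; it is more classical and self-contained, and exposes a martingale from which maximal inequalities would come for free, whereas the correlation-inequality route spares you the construction of $U^{\theta}$, the bound on $P^{\theta}U^{\theta}$, and the boundary terms, and comes with the explicit constant $(2\q)^{\q/2}$. Two executional points you should patch if you write your version out in full: $(\B_1)$ is stated only for $0<F\le \V^{\theta}$, so applying it to the signed function $\wt{g}(\theta,\cdot)$ via positive and negative parts costs a factor $2$ (plus a word about non-strict positivity of the parts); and your bound $|P^{\theta}U^{\theta}|\le C\,\V^{\theta}$ tacitly requires a $\theta$-uniform control of $\lambda^{\theta}(\wt{g})$, which $(\B_1)$ states only pointwise in $\theta$ --- it is recoverable from $(\B_1)$ itself, e.g. $\lambda^{\theta}(\V^{\theta})\le (1+R)\,(\upsilon^{*}_{\q}(x))^{1/\q}$ for any fixed $x$ with $\upsilon^{*}_{\q}(x)<\infty$, and the resulting $x$-dependence is harmless because the proposition normalizes precisely by $1+\upsilon^{*}_{\q}(x)+g^{*}_{\q}(x)+h^{*}_{\q}(x)$. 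Neither point is a gap in the approach, only in its write-up.
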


\begin{proposition} \label{Pr.sec:Mrk.2_theta} 
Assume that conditions 
$(\B_\zs{1})-(\B_\zs{3})$ hold and
$\upsilon^{*}_\zs{\q}(x)<\infty$, $g^{*}_\zs{\q}(x)<\infty$, and $h^{*}_\zs{\q}(x)<\infty$ for some  $\q\ge 2$.
 Then for any 
$\varepsilon>0$ there exists $\delta_\zs{0}>0$
such that
\begin{equation}\label{sec:Mrk.4E_ev}
\sup_\zs{0<\delta\le \delta_\zs{0}} \sup_\zs{n\ge 2}\, n^{\q/2} \,\sup_\zs{\theta\in\Theta}\,\sup_\zs{x\in\Xc}\,
\frac{\P^{\theta}_\zs{x}\left(\sup_\zs{\vert u-\theta\vert<\delta}\vert W^{g}_\zs{n}(u,\theta)\vert>\varepsilon \right)}
{\left(1+\upsilon^{*}_\zs{\q}(x)+g^{*}_\zs{\q}(x)+h^{*}_\zs{\q}(x)\right)}<\infty\,.
\end{equation}
\end{proposition}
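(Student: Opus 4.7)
The plan is to linearise the supremum over $u$ via the Hölder condition $(\B_2)$, thereby reducing the problem to the $q$-th moment estimates already furnished by Proposition~\ref{Pr.sec:Mrk.1_theta}. Concretely, for any $u$ with $\vert u-\theta\vert<\delta$, condition \eqref{sec:Mrk.3_hh} gives the deterministic pointwise bound $\vert g(u,X_j,X_{j-1})-g(\theta,X_j,X_{j-1})\vert\le h(X_j,X_{j-1})\,\delta^\gamma$, so that the cancellation of the $\lambda^\theta(\wt g)$ terms in the definition \eqref{sec:Mrk.1-1} yields
\begin{equation*}
\sup_{\vert u-\theta\vert<\delta}\vert W^g_n(u,\theta)\vert
\;\le\; \vert\wt W^g_n(\theta)\vert \;+\; \delta^{\gamma}\, n^{-1}\sum_{j=1}^n h(X_j,X_{j-1})
\;=\; \vert\wt W^g_n(\theta)\vert \;+\; \delta^{\gamma}\bl\vert\wt W^h_n(\theta)\vert+\lambda^\theta(\wt h)\br .
\end{equation*}

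Next, I would obtain a uniform (in $\theta$ and independent of $x$) bound on the deterministic piece $\lambda^\theta(\wt h)$. By $(\B_3)$, $\lambda^\theta(\wt h)\le \lambda^\theta(V^\theta)$, and applying $(\B_1)$ with $F=V^\theta$ at $n=0$ (and using $V^\theta\ge 1$) gives $\lambda^\theta(V^\theta)\le(1+R)\inf_x V^\theta(x)$, which is finite uniformly in $\theta$ under the standard normalisation of the Lyapunov function. Call this uniform bound $L$. Now choose $\delta_0>0$ so that $\delta_0^{\gamma}L\le \varepsilon/3$; then for every $\delta\in(0,\delta_0]$ the third summand above is at most $\varepsilon/3$ deterministically, so
\begin{equation*}
\set{\sup_{\vert u-\theta\vert<\delta}\vert W^g_n(u,\theta)\vert>\varepsilon}
\;\subseteq\; \set{\vert\wt W^g_n(\theta)\vert>\varepsilon/3}\cup\set{\delta^{\gamma}\vert\wt W^h_n(\theta)\vert>\varepsilon/3}.
\end{equation*}

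A Chebyshev--Markov inequality applied with exponent $q$ to each of the two events, together with Proposition~\ref{Pr.sec:Mrk.1_theta} applied to $\wt W^g_n(\theta)$ and $\wt W^h_n(\theta)$, then yields
\begin{equation*}
\P^\theta_x\bl\sup_{\vert u-\theta\vert<\delta}\vert W^g_n(u,\theta)\vert>\varepsilon\br
\;\le\; \brc{\frac{3}{\varepsilon}}^{q}\EV^\theta_x\vert\wt W^g_n(\theta)\vert^{q}
\;+\;\brc{\frac{3\delta^{\gamma}}{\varepsilon}}^{q}\EV^\theta_x\vert\wt W^h_n(\theta)\vert^{q}
\;\le\; \frac{C\,(1+\upsilon^*_q(x)+g^*_q(x)+h^*_q(x))}{n^{q/2}},
\end{equation*}
with $C$ depending only on $\varepsilon$, $\delta_0$, $\gamma$ and the constants of $(\B_1)$--$(\B_3)$, uniformly in $\theta\in\Theta$, $x\in\Xc$ and $n\ge 2$. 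Rearranging yields \eqref{sec:Mrk.4E_ev}.

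The main obstacle is the third, purely deterministic, summand $\delta^{\gamma}\lambda^\theta(\wt h)$: no Markov argument can extract the required $n^{-q/2}$ decay from it, so it must be forced below $\varepsilon$ by the choice of $\delta_0$. This is precisely why the proposition is local in $\delta$, and why one needs a uniform-in-$\theta$ upper bound for $\lambda^\theta(\wt h)$, which is where $(\B_3)$ combined with the geometric ergodicity in $(\B_1)$ becomes essential.
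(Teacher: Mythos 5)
Your proof is correct and follows essentially the same route as the paper's: the H\"older condition $(\B_\zs{2})$ linearises the supremum into $\sup_{\vert u-\theta\vert<\delta}\vert W^{g}_\zs{n}(u,\theta)\vert \le \vert \wt{W}^{g}_\zs{n}(\theta)\vert + \delta^{\gamma}\bl(\vert\wt{W}^{h}_\zs{n}(\theta)\vert+\lambda^{\theta}(\wt{h})\br)$, the deterministic term is killed by the choice of $\delta_\zs{0}$, and the two stochastic events are bounded via Markov's inequality at power $\q$ together with the moment estimate of Proposition~\ref{Pr.sec:Mrk.1_theta} (the paper splits at $\varepsilon/2$ and uses the event $\{\vert\wt{W}^{h}_\zs{n}(\theta)\vert>1\}$ instead of your $\varepsilon/3$ split, a purely cosmetic difference). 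If anything, your explicit uniformisation of $\lambda^{\theta}(\wt{h})$ over $\theta$ (via $(\B_\zs{1})$ at $n=0$, modulo your mild extra normalisation assumption on $\inf_x \V^{\theta}(x)$) is more careful than the paper's own proof, whose threshold $\delta_\zs{0}=\bl(\varepsilon/2(1+\lambda^{\theta}(\wt{h}))\br)^{1/\gamma}$ is $\theta$-dependent and silently presumes $\sup_{\theta\in\Theta}\lambda^{\theta}(\wt{h})<\infty$ to deliver the stated uniform $\delta_\zs{0}$.
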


\proof
First note that
$$
\sup_\zs{\vert u-\theta\vert<\delta}\vert W^{g}_\zs{n}(u,\theta)\vert \le \delta^{\gamma}\left(\lambda^{\theta}(\wt{h})+
\wt{W}^{h}_\zs{n}(\theta)\right)+\vert \wt{W}^{g}_\zs{n}(\theta)\vert\,.
$$
Therefore, for any 
$$
0< \delta\le \left(\frac{\varepsilon}{2(1+\lambda^{\theta}(\wt{h}))}\right)^{1/\gamma}:=\delta_\zs{0}
$$
we obtain that
$$
\P^{\theta}_\zs{x}\left(\sup_\zs{\vert u-\theta\vert<\delta}\vert W^{g}_\zs{n}(u,\theta)\vert>\varepsilon \right)\\
\le  \P^{\theta}_\zs{x}\left(\vert \wt{W}^{g}_\zs{n}(\theta)\vert>\varepsilon/2\right)+\P^{\theta}_\zs{x}\left(\vert \wt{W}^{h}_\zs{n}(\theta)\vert>1\right)\,.
$$
Applying the bound \eqref{sec:Mrk.1_theta} we obtain \eqref{sec:Mrk.4E_ev}. Hence Proposition ~\ref{Pr.sec:Mrk.2_theta} follows.
\endproof

We return to the change detection problem for Markov processes, assuming
that the sequence of observations $(X_\zs{n})_\zs{n\ge 1}$ is a Markov process, such that
$(X_\zs{n})_\zs{1\le n \le \nu}$ is a homogeneous process
 with the transition (from $x$ to $y$) density $\psi(y|x)$. In the sequel, we denote by $\check{\P}$
 the distribution of this process when $\nu=\infty$, i.e., 
 when the Markov process $(X_\zs{j})_\zs{j\ge 1}$ has transition density 
 $\psi(y|x)$. The expectation with respect to this distribution we denote by $\check{\E}$.
Moreover, $(X_\zs{n})_\zs{n> \nu}$ is 
homogeneous positive  ergodic  with the transition density $f_\zs{\theta}(y|x)$
and the ergodic (stationary) distribution $\lambda^{\theta}$. 
The densities $\psi(y|x)$ and  $f_\zs{\theta}(y|x)$   are calculated 
with respect to a sigma-finite positive measure $\mu$ on $\Bc$.

In this case, we can represent the process $Z^{k}_\zs{n}(\theta)$ defined in \eqref{Znk_df} as
\begin{equation*}
Z^{k}_\zs{n}(\theta)=\sum^{n}_\zs{j=k+1} g(\theta,X_\zs{j},X_\zs{j-1})\,, \quad g(\theta,y,x)=\log \frac{f_\zs{\theta}(y|x)}{\psi(y|x)}\,.
\end{equation*}
\noindent
Therefore, in this case,
\begin{equation*}
\wt{g}(\theta,x)=\int_\zs{\Xc}\,g(\theta,y,x)\,f_\zs{\theta}(y|x)\,\mu(\d y) \,.
\end{equation*}
Moreover, if we assume that density $f_\zs{u}(y|x)$ is continuously differentiable with respect to $u$ in a compact set  $\K\subseteq\Theta$, then the inequality \eqref{sec:Mrk.3_hh}
holds with $\gamma=1$ and for any function $h(y,x)$ for which
\begin{equation*}
\sup_\zs{u\in\Theta}\,\max_\zs{1\le j\le p}\,\vert \partial g(u,y,x)/\partial u_\zs{j}\vert \le h(y,x)\,.
\end{equation*}

\noindent $(\C_\zs{1})$ {\em Assume that there exists a set $C\in\Bc$ with $\mu(C)<\infty$ such that

\begin{enumerate}
 
 \item[$({\rm C}1.1)$] $f_\zs{*}=  \inf_\zs{\theta\in\K}\, \inf_\zs{x,y\in C}\,f_\zs{\theta}(y|x)>0$.

\item[$({\rm C}1.2)$] 
For any $\theta\in\K$ there exists  $\Xc\to [1,\infty)$ Lyapunov's function $\V^{\theta}$ such that

\begin{itemize}
\item  $\V^{\theta}(x)\ge \wt{g}(\theta,x)$ and $\V^{\theta}(x)\ge \wt{h}(\theta,x)$ for any $\theta\in\K$ and $x\in\Xc$,

\item $\sup_\zs{\theta\in\Theta}\, \sup_\zs{x\in C} \V^{\theta}(x)<\infty$.

\item For some $0<\rho<1$ and $D>0$ and for all  $x\in\Xc$ and $\theta\in\Theta$, 
\begin{equation}\label{drift_ineq_11}
\E^{\theta}_\zs{x}[\V^{\theta}(X_\zs{1})] \le  (1-\rho) \V^{\theta}(x) + D\Ind{C}(x)\,.
\end{equation}
\end{itemize}
\end{enumerate}
}
 
\noindent 
$(\C_\zs{2})$ 
{\em Assume that there exists $\q> 2$ such that 
$$
\sup_\zs{k\ge 1}\check{\E}\,[g^{*}_\zs{\q}(X_\zs{k})] <\infty \,,\quad \sup_\zs{k\ge 1}\,\check{\E} [h^{*}_\zs{\q}(X_\zs{k})] <\infty ,
\quad\quad \sup_\zs{k\ge 1}\,\check{\E}[\upsilon^{*}_\zs{\q}(X_\zs{k})] <\infty\,,
$$
where the function $\upsilon^{*}_\zs{\q}(x)$ is defined in \eqref{sec:Mrk.3}, $g^{*}_\zs{\q}(x)$ and $h^{*}_\zs{\q}(x)$ are given in \eqref{sec:Mrk.2}.
}

\begin{theorem} \label{Th.sec:Mrk.1} 
Assume that conditions $(\C_\zs{1})-(\C_\zs{2})$ hold for some compact set $\K\subseteq\Theta$. Then for any $0<r<\q/2$ condition $(\A^{*}_\zs{2}(r))$ holds with 
 $I(\theta)=\lambda^{\theta}(\wt{g})$.
\end{theorem}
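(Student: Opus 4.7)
The strategy is to reduce the left-tail condition $(\A^{*}_{2}(r))$ to the uniform concentration bound \eqref{sec:Mrk.4E_ev} of Proposition~\ref{Pr.sec:Mrk.2_theta} via the Markov property at the change-point $\nu$ and integration against the pre-change law of $X_\nu$. I would start by unpacking
$$Z^{\nu}_{\nu+n}(u) = \sum_{j=\nu+1}^{\nu+n} g(u, X_j, X_{j-1}), \qquad g(\theta,y,x)=\log\frac{f_{\theta}(y|x)}{\psi(y|x)},$$
and noting that under $\Pb_{\nu,\theta}$, conditionally on $X_\nu=x$, the post-change block $(X_{\nu+1},\ldots,X_{\nu+n})$ is a homogeneous Markov chain with transition density $f_\theta$ started at $x$. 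Since $I_\theta=\lambda^\theta(\wt{g})$, this produces the conditional distributional identity $n^{-1} Z^{\nu}_{\nu+n}(u)-I_\theta \stackrel{d}{=} W^{g}_{n}(u,\theta)$ under $\P^\theta_x$. The event $\{\inf_{|u-\theta|<\delta}W^{g}_{n}(u,\theta) < -\varepsilon\}$ is contained in $\{\sup_{|u-\theta|<\delta}|W^{g}_{n}(u,\theta)|>\varepsilon\}$, which is precisely the object estimated in Proposition~\ref{Pr.sec:Mrk.2_theta}.

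Since for $j\le\nu$ the transitions $\psi$ do not depend on $\theta$, the marginal law of $X_\nu$ under $\Pb_{\nu,\theta}$ coincides with its law under $\check{\P}$. Integrating the conditional bound yields
$$\sup_{\nu\ge 0}\Pb_{\nu,\theta}\!\left(\frac{1}{n}\inf_{|u-\theta|<\delta} Z^{\nu}_{\nu+n}(u) < I_\theta-\varepsilon\right) \le \sup_{k\ge 0}\check{\E}\!\left[\P^{\theta}_{X_k}\!\left(\sup_{|u-\theta|<\delta}|W^{g}_{n}(u,\theta)|>\varepsilon\right)\right].$$
Applying Proposition~\ref{Pr.sec:Mrk.2_theta} for any fixed $0<\delta\le\delta_0$ bounds the inner probability by $C_\varepsilon n^{-\q/2}\bigl(1+\upsilon^{*}_{\q}(x)+g^{*}_{\q}(x)+h^{*}_{\q}(x)\bigr)$ uniformly in $\theta\in\K$; invoking $(\C_2)$ then gives $\sup_{\nu\ge 0}\Pb_{\nu,\theta}(\cdots)\le C'_\varepsilon n^{-\q/2}$ uniformly in $\theta\in\K$. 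Multiplying by $n^{r-1}$ and summing yields the series $\sum_{n\ge 1} n^{r-1-\q/2}$, which converges precisely when $r<\q/2$. Since the estimate is independent of $\delta$ for $\delta\le\delta_0$, the limit $\delta\to 0$ in the definition of $\Upsilon_r(\varepsilon,\theta)$ preserves the bound; taking $\sup_{\theta\in\K}$ yields $\Upsilon^{*}_r(\varepsilon,\K)<\infty$, which is exactly $(\A^{*}_{2}(r))$.

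The main obstacle I expect is the bookkeeping needed to verify that $(\C_1)$--$(\C_2)$ really imply all of $(\B_1)$--$(\B_3)$ uniformly in $\theta\in\K$, so that Proposition~\ref{Pr.sec:Mrk.2_theta} applies with constants independent of $\theta$. In particular, the Doeblin-type minorization on $C$ provided by $f_*=\inf_{\theta\in\K}\inf_{x,y\in C} f_\theta(y|x) >0$ combined with the drift inequality \eqref{drift_ineq_11} has to be shown to deliver the geometric ergodicity assumed in $(\B_1)$ with rate $\kappa$ and prefactor $R$ uniform over $\K$; and the dominations $|\wt{g}(\theta,x)|\le\V^{\theta}(x)$ and $|\wt{h}(\theta,x)|\le\V^{\theta}(x)$ in $(\C_1)$ together with the integrability in $(\C_2)$ must be combined to produce the finiteness of $\sup_k\check\E[\upsilon^{*}_\q(X_k)]$, $\sup_k\check\E[g^{*}_\q(X_k)]$, and $\sup_k\check\E[h^{*}_\q(X_k)]$, which feed into the Fubini step above. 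Once these structural checks are performed, the distributional reduction and the $n^{-\q/2}$--series comparison outlined above close the argument.
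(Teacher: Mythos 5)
Your proposal is correct and follows essentially the same route as the paper's own proof: the paper likewise bounds the inf-event by the event $\{\sup_{\vert u-\theta\vert<\delta}\vert W^{g}_\zs{n}(u,\theta)\vert>\varepsilon\}$, uses the homogeneous Markov property to write the probability as $\check{\E}[\Psi^{\theta}(X_\zs{\nu})]$ with $\Psi^{\theta}(x)=\P^{\theta}_\zs{x}(\sup_{\vert u-\theta\vert<\delta}\vert W^{g}_\zs{n}(u,\theta)\vert>\varepsilon)$, applies Proposition~\ref{Pr.sec:Mrk.2_theta} to get the uniform $\C^{*}\,\U^{*}(x)\,n^{-\q/2}$ bound, and invokes $(\C_\zs{2})$ to make the series $\sum_n n^{r-1-\q/2}$ converge exactly when $r<\q/2$. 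The one ``obstacle'' you flag---deducing $(\B_\zs{1})$ from $(\C_\zs{1})$---is dispatched in the paper by a one-line appeal to Theorem~\ref{Th.AMc.1} in the appendix (the minorization from $({\rm C}1.1)$ plus the drift inequality \eqref{drift_ineq_11} give $(\D_\zs{1})$--$(\D_\zs{2})$), and your second worry is moot since $(\C_\zs{2})$ directly \emph{assumes} the finiteness of $\sup_\zs{k}\check{\E}[\upsilon^{*}_\zs{\q}(X_\zs{k})]$, $\sup_\zs{k}\check{\E}[g^{*}_\zs{\q}(X_\zs{k})]$ and $\sup_\zs{k}\check{\E}[h^{*}_\zs{\q}(X_\zs{k})]$ rather than requiring them to be derived.
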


\proof
First note that it follows from Theorem \ref{Th.AMc.1}  in the appendix that the conditions $(\C_\zs{1})$ imply the property $(\B_\zs{1})$. So Proposition \ref{Pr.sec:Mrk.2_theta} yields
 that there exists a positive constant $\C^{*}$ such that for any $x\in\Xc$
$$
\P^{\theta}\left(\sup_\zs{\vert u-\theta\vert<\delta}\vert W^{g}_\zs{n}(u,\theta)\vert>\varepsilon \vert X_\zs{0}=x\right) \le \C^{*}\,\U^{*}(x)\,n^{-\q/2}\,,
$$
where $\U^{*}(x)=1+\upsilon^{*}_\zs{\q}(x)+g^{*}_\zs{\q}(x)+h^{*}_\zs{\q}(x)$. Note now that
$$
\Pb_{\nu,\theta}\brc{\frac{1}{n} \inf_{\vert u-\theta\vert<\delta} Z_{\nu+n}^\nu(u) < I_\theta  - \varepsilon} 
 \le \Pb_{\nu,\theta}\brc{\sup_{\vert u-\theta\vert<\delta}\,\left\vert\frac{1}{n} Z_{\nu+n}^\nu(u) - I_\theta  \right\vert >\varepsilon} .
$$
In view of the homogeneous Markov property we obtain that for $I_\zs{\theta}=\lambda^{\theta}(\wt{g})$ the last probability can be represented as
$$
 \Pb_{\nu,\theta}\brc{\sup_{\vert u-\theta\vert<\delta}\,\left\vert\frac{1}{n} Z_{\nu+n}^\nu(u) - I_\theta \right\vert >\varepsilon}  = \check{\E}[\Psi^{\theta}(X_\zs{\nu})] \,,
$$
where $\Psi^{\theta}(x)=\P^{\theta}\left(\sup_\zs{\vert u-\theta\vert<\delta}\vert W^{g}_\zs{n}(u,\theta)\vert>\varepsilon \vert X_\zs{0}=x\right)$. Therefore,
$$
\Pb_{\nu,\theta}\brc{\sup_{\vert u-\theta\vert<\delta}\,\left\vert\frac{1}{n} Z_{\nu+n}^\nu(u) - I_\theta  \right\vert>\varepsilon} 
 \le  \C^{*}\,n^{-\q/2}\check{\E}[\U^{*}(X_\zs{\nu})]\,.
$$
Now condition  $(\C_\zs{2})$ implies  $(\A^{*}_\zs{2}(r))$ for any $0<r<\q/2$.
\endproof

Note that  condition $({\rm C}1.1)$ does not always hold for the process $(X_\zs{n})_\zs{n\ge 1}$ directly.
Unfortunately, this condition does not hold for the practically important autoregression process of the order more than one.
For this reason, we need to weaken this requirement. Assume that there exists $p\ge 2$
for which the  process $(\wt{X}_\zs{\iota,n})_\zs{n\ge \wt{\nu}}$ for $\wt{\nu}=\nu/p-\iota$ defined as $\wt{X}_\zs{\iota,n}=X_\zs{n p+\iota}$ satisfies the following properties:

\vspace{2mm}
\noindent $(\C^{\prime}_1)$ {\em Assume that there exists a set $C\in\Bc$ with $\mu(C)<\infty$ such that

\begin{enumerate}
 
 \item[$(\C^{\prime}1.1)$] 
 $\wt{f}_\zs{*}=\inf_\zs{1\le \iota\le p}\,\inf_\zs{\theta\in\K}\, \inf_\zs{x,y\in C}\,\wt{f}_\zs{\iota,\theta}(y|x)>0$,
 where $\wt{f}_\zs{\iota,\theta}(y|x)$ is the transition density for the  process $(\wt{X}_\zs{\iota,n})_\zs{n\ge 1}$.

\item[$(\C^{\prime}1.2)$] 
For any $\theta\in\K$ there exists  $\Xc\to [1,\infty)$ Lyapunov's function $\V^{\theta}$ such that 
\begin{equation}
\label{bound_V}
\upsilon^{*}=\max_\zs{1\le j\le p}\,\sup_\zs{\theta\in\K}\,\sup_\zs{x\in\Xc}\, \frac{\E^{\theta}_\zs{x}\,[\V^{\theta}(X_\zs{j})]}{\V^{\theta}(x)} \,<\,\infty\,;
\end{equation}
\begin{equation}
\label{bound_V11}
\upsilon^{*}_\zs{1}=\sup_\zs{\theta\in\K}\,\lambda^{\theta}(\V^{\theta})\,<\,\infty\,.
\end{equation}

\begin{itemize}
\item 
$\V^{\theta}(x)\ge \wt{g}(\theta,x)$ and $\V^{\theta}(x)\ge \wt{h}(\theta,x)$ for $\theta\in\K$ and $x\in\Xc$ and
$
 \sup_\zs{\theta\in\K}\,\sup_\zs{x\in C} \V^{\theta}(x)<\infty\,. 
$

\item For some $0<\rho<1$ and $D>0$ and for all  $x\in\Xc$, $\theta\in\K$, and $0\le \iota\le p-1$
\begin{equation}\label{drift_in_+++}
\E^{\theta}\left[\V^{\theta}(\wt{X}_\zs{\iota,1})\vert \wt{X}_\zs{\iota,0}=x\right] \le  (1-\rho) \V^{\theta}(x) + D\Ind{C}(x)\,.
\end{equation}
\end{itemize}
\end{enumerate}
}

\begin{theorem} \label{Th.sec:Mrk.1'} 
Assume that conditions $(\C 2)$ and $(\C^\prime_1)$ hold. Then for any $0<r<\q/2$ condition $(\A^{*}_\zs{2}(r))$ holds with $I(\theta)=\lambda^{\theta}(\wt{g})$.
\end{theorem}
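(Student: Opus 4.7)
The plan is to reduce Theorem~\ref{Th.sec:Mrk.1'} to the already-proved Theorem~\ref{Th.sec:Mrk.1} by decomposing the LLR sum into $p$ residue classes and working with each subsampled chain separately. Specifically, I would write
\[
Z_\zs{\nu+n}^{\nu}(u) \;=\; \sum_\zs{\iota=0}^{p-1}\, S_\zs{\iota,n}^{\nu}(u)\,,\qquad
S_\zs{\iota,n}^{\nu}(u)=\sum_\zs{\substack{\nu<j\le \nu+n\\ j\equiv \iota\,(\mathrm{mod}\,p)}} g(u,X_\zs{j},X_\zs{j-1})\,,
\]
and analyze each $S_\zs{\iota,n}^{\nu}$ using the subsampled chain $\wt{X}_\zs{\iota,k}=X_\zs{kp+\iota}$ guaranteed by $(\C^{\prime}_1)$. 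For each fixed $\iota$, the minorization $(\C^{\prime}1.1)$ together with the drift inequality \eqref{drift_in_+++} and the integrability \eqref{bound_V}--\eqref{bound_V11} are exactly the hypotheses needed to invoke Theorem~\ref{Th.AMc.1} in the appendix; this delivers condition $(\B_\zs{1})$ uniformly in $\theta\in\K$ for every subsampled chain.

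Next I would recast each $S_\zs{\iota,n}^{\nu}$ as a functional of a genuine homogeneous Markov chain so that Proposition~\ref{Pr.sec:Mrk.2_theta} applies directly. The cleanest device is the augmented pair process $Y_\zs{\iota,k}=(X_\zs{kp+\iota},X_\zs{kp+\iota-1})$, which is Markov on $\Xc\times\Xc$; its transition density factorises through the $p$-step kernel of the original chain times the one-step density $\psi(\,\cdot\,|\cdot)$, so it inherits both the minorization (on $C\times C$) and the drift (with Lyapunov function $(x,y)\mapsto \V^{\theta}(x)+\V^{\theta}(y)$) from $(\C^{\prime}_1)$. Condition $(\B_\zs{2})$ transfers unchanged since $g$ depends only on the pair arguments. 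Applying Proposition~\ref{Pr.sec:Mrk.2_theta} to the pair process with the function $g$ then yields, for any $\varepsilon>0$ and sufficiently small $\delta>0$,
\[
\sup_\zs{n\ge 2}\, n^{\q/2}\,\sup_\zs{\theta\in\K}\,\sup_\zs{x\in\Xc}\,
\frac{\Pb_\zs{x}^{\theta}\!\brc{\sup_\zs{|u-\theta|<\delta}\,\abs{\tfrac{1}{n} S_\zs{\iota,n}^{0}(u)-\tfrac{1}{p}\lambda^{\theta}(\wt g)}>\varepsilon/p}}
{1+\upsilon_\zs{\q}^{*}(x)+g_\zs{\q}^{*}(x)+h_\zs{\q}^{*}(x)}\;<\;\infty\,.
\]

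A union bound over $\iota\in\{0,\dots,p-1\}$ then controls the deviation of $n^{-1}Z_\zs{\nu+n}^{\nu}(u)$ from $I_\theta=\lambda^{\theta}(\wt g)$ uniformly in $u$ with $|u-\theta|<\delta$, giving a bound of order $n^{-\q/2}$ with the same integrable envelope. Integrating against the pre-change law of $X_\zs{\nu}$ and using condition~$(\C_\zs{2})$, exactly as in the last display of the proof of Theorem~\ref{Th.sec:Mrk.1}, yields
\[
\sup_\zs{\theta\in\K}\,\sup_\zs{\nu\ge 0}\,\Pb_\zs{\nu,\theta}\!\brc{\sup_\zs{|u-\theta|<\delta}\,\abs{\tfrac{1}{n} Z_\zs{\nu+n}^{\nu}(u)-I_\theta}>\varepsilon}
\;\le\; \mathrm{const}\cdot n^{-\q/2}\,,
\]
and summing $n^{r-1}\cdot n^{-\q/2}$ converges for every $0<r<\q/2$; this is precisely $(\A^{*}_\zs{2}(r))$.

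The principal technical obstacle is step~2: honest verification that the pair process $Y_\zs{\iota,\cdot}$ inherits the minorization and drift from $(\C^{\prime}_1)$, since $(\C^{\prime}_1)$ is phrased for the scalar subsampled chain while the summand $g(\theta,X_\zs{j},X_\zs{j-1})$ couples consecutive observations lying in different residue classes. The positivity of $\wt f_\zs{\iota,\theta}$ on $C\times C$ together with the one-step density $\psi$ bounded below on $C\times C$ (absorbed into the small set) is what makes the pair kernel admit a minorizing measure; the drift bookkeeping requires only \eqref{bound_V} to propagate the Lyapunov control across the single ``off-residue'' coordinate. Once this is in place, the rest is a direct transcription of the proof of Theorem~\ref{Th.sec:Mrk.1}.
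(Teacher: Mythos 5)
Your overall plan---reduce to the subsampled chains supplied by $(\C^{\prime}_1)$---is in the right spirit, but the step you yourself flag as the principal obstacle is a genuine gap, not a detail. The pair chain $Y_{\iota,k}=(X_{kp+\iota},X_{kp+\iota-1})$ has transition kernel $f^{(p-1)}_{\theta}(y_{2}\,|\,x_{1})\,f_{\theta}(y_{1}\,|\,y_{2})$ under $\Pb^{\theta}$ (the relevant measure for Proposition~\ref{Pr.sec:Mrk.2_theta} is the post-change one; the pre-change density $\psi$ enters only through the law of $X_{\nu}$ in the final averaging step), i.e.\ a $(p-1)$-step kernel composed with a one-step kernel --- not, as you write, the $p$-step kernel times $\psi$. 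Condition $(\C^{\prime}1.1)$ lower-bounds only the $p$-step density $\wt{f}_{\iota,\theta}$ and says nothing about either factor. This matters because $(\C^{\prime}_1)$ was introduced precisely for models in which the one-step kernel is degenerate: in the AR($p$) example, for the companion-form chain \eqref{sec:Ex.7-1n} the innovation enters only the first coordinate, so $f_{\theta}(y_{1}|y_{2})$ has no density with respect to Lebesgue measure, and the $(p-1)$-step covariance $\sum_{j=0}^{p-2}A^{j}B(A^{\top})^{j}$ is singular (a sum of $p-1$ rank-one matrices). Hence your pair chain admits no minorization on $C\times C$ of the kind you invoke, exactly in the cases the theorem is designed to cover. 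The drift transfer is also broken: with the Lyapunov function $(x,y)\mapsto \V^{\theta}(x)+\V^{\theta}(y)$, the second coordinate of $Y_{\iota,k+1}$ is reached from the first coordinate of $Y_{\iota,k}$ in $p-1$ steps, for which \eqref{bound_V} gives only $\E^{\theta}_{x}[\V^{\theta}(X_{p-1})]\le \upsilon^{*}\,\V^{\theta}(x)$ with $\upsilon^{*}\ge 1$; since that factor carries no contraction, the analogue of \eqref{drift_in_+++} fails for the sum.

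The paper's proof goes in the opposite direction and avoids all of this. Rather than pushing the statistic $Z_{\nu+n}^{\nu}(u)$ down to the subsampled chains (your residue-class decomposition plus union bound), it lifts the ergodicity up to the full chain: Theorem~\ref{Th.AMc.1} applied under $(\C^{\prime}_1)$ yields $(\B_{1})$ for each subsampled chain $\wt{X}_{\iota,\cdot}$, and then, writing $n=mp+\iota$ and conditioning on $X_{\iota}$, one gets $\vert \E^{\theta}_{x}[F(X_{n})]-\lambda^{\theta}(F)\vert \le R_{\iota}\,e^{-\kappa_{\iota}m}\,\E^{\theta}_{x}[\V^{\theta}(X_{\iota})]$, which together with \eqref{bound_V}--\eqref{bound_V11} establishes condition $(\B_{1})$ for the original chain $(X_{n})$ itself. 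Once $(\B_{1})$ holds for the full chain, Proposition~\ref{Pr.sec:Mrk.2_theta} applies directly to the original summands $g(\theta,X_{j},X_{j-1})$ --- no auxiliary pair chain, no decomposition of the LLR, no union bound --- and the remainder is verbatim the proof of Theorem~\ref{Th.sec:Mrk.1}. To rescue your route you would have to assume positivity or nondegeneracy of the $(p-1)$-step kernel (or postulate a minorization for the pair chain outright), hypotheses not contained in $(\C^{\prime}_1)$; the paper's lifting argument shows they are unnecessary.
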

\proof
Note again that by Theorem \ref{Th.AMc.1} (see Appendix) conditions $(\C_1)$ yield condition $(\B_1)$ for $(\wt{X}_\zs{\iota,n})_\zs{n\ge \wt{\nu}}$, i.e.,
for some positive constants $0<R_\zs{\iota}<\infty$ and $\kappa_\zs{\iota}>0$,
$$
\sup_\zs{n\ge 0}\, e^{\kappa_\zs{\iota} n}\, \sup_\zs{x\in\bbr} \, \sup_\zs{\theta\in\K} \, \sup_\zs{0< F \le \V^{\theta}}\, \, \frac{1}{\V^{\theta}(x)}\, \left| D^{\theta}_\zs{m}(x) \right| \le\,  R_\zs{\iota}\,,
$$
where $D^{\theta}_\zs{m}(x)=\E^{\theta}\,\left[\left(F(\wt{X}_\zs{\iota,m})-\lambda^{\theta}(F)\right)\vert \wt{X}_\zs{\iota,0}=x\right]$. 
So, for any $n\ge p$ we can write that $n=mp+\iota$ for some $0\le \iota\le p-1$ and we obtain 
$$
\vert \E^{\theta}_\zs{x}(F(X_\zs{n})-\lambda^{\theta}(F)) \vert = \vert \E^{\theta}_\zs{x} \,D^{\theta}_\zs{m}(X_\zs{\iota}) \vert \le  \, R_\zs{\iota}\, \V^{\theta}(X_\zs{\iota}) e^{\kappa_\zs{\iota} m} \,.
$$
Now, the upper bound \eqref{bound_V} implies
$$
\sup_\zs{n\ge p}\, e^{\kappa n}\, \sup_\zs{x\in\bbr} \, \sup_\zs{\theta\in\K} \, \sup_\zs{0< F \le \V^{\theta}}\, \,
\frac{1}{\V^{\theta}(x)}\, \vert \E^{\theta}_\zs{x}[F(X_\zs{n})-\lambda^{\theta}(F)] \vert \, \le\,  R_\zs{*}\,,
$$
where $R_\zs{*}=\upsilon^{*}\max_\zs{0\le \iota}R_\zs{\iota}$ and $\kappa=\min_\zs{0\le\iota\le p-1}\kappa_\zs{\iota}/p$. Thus, using bound \eqref{bound_V11}, we obtain condition $(\B_\zs{1})$ with 
$R=e^{\kappa_\zs{*}p} (\upsilon^{*}+\upsilon^{*}_\zs{1}) + R_\zs{*}$. Using now the same argument as in the proof of Theorem \ref{Th.sec:Mrk.1} we obtain Theorem \ref{Th.sec:Mrk.1'}.
\endproof

\section{Examples}\label{sec:Ex}

We now present examples of detecting changes in multivariate Markov models that illustrate the general theory developed in Sections~\ref{sec:Bay} and \ref{sec:MaRe}. 

\begin{example}{(Change in the parameters of the multivariate linear difference equation).}
Consider the multivariate model in $\bbr^{p}$ given by
\begin{equation*}
X_\zs{n} = \left(\check{A}_\zs{n}\Ind{n\le \nu}+A_\zs{n}\Ind{n>\nu}\right)\,X_\zs{n-1}+w_\zs{n}\,,
\end{equation*}
where $\check{A}_\zs{n}$ and $A_\zs{n}$ are $p\times p$ random matrices and $(w_\zs{n})_\zs{n\ge 1}$ is an i.i.d. sequence of Gaussian random vectors $\Nc(0, Q_\zs{0})$ in
$\bbr^{p}$ with the positive definite $p\times p$ matrix $Q_\zs{0}$. Assume also that $\check{A}_\zs{n}=A_\zs{0}+B_\zs{n}$, $A_\zs{n}=\theta+B_\zs{n}$
and $(B_\zs{n})_\zs{n\ge 1}$ are i.i.d. Gaussian random matrices $\Nc(0\,,Q_\zs{1})$, where the $p^{2}\times p^{2}$ matrix $Q_\zs{1}=\E[B_\zs{1}\otimes B_\zs{1}]$ is  positive definite. 
Hereafter, for $p\times p$  matrices  $\U=(\u_{ij})_{1\le i,j\le p}$ and $\V=(\textbf{v}_{ij})_{1\le i,j\le p}$ the $p^{2}\times p^{2}$ matrix $\U\otimes \V$ is
$$
\U\otimes \V=(\u_{ij}\textbf{v}_{kl})_{1\le i,j,k,l\le p}\,.
$$

Assume, in addition, that all eigenvalues of the matrix 
$
\E[\check{A}_\zs{1}\,\otimes\,\check{A}_\zs{1}] =A_\zs{0}\otimes A_\zs{0}+Q_\zs{1}
$
are less than one in module. Define
\begin{equation*}
\Theta=\{\theta\in\bbr^{p^{2}}\,:\, \max_\zs{1\le j \le p^{4}} \e_\zs{j}(\theta\otimes\theta+Q_\zs{1})<1\}\}\setminus \,\{A_\zs{0}\}\,,
\end{equation*}
where $\e_\zs{j}(A)$ is the $j$th eigenvalue of matrix $A$, and assume further that the matrix $\theta\in\Theta$.
In this case,  the processes  $(X_\zs{n})_\zs{n\ge 1}$ (in the case $\nu=\infty$) and $(X_\zs{n})_\zs{n> \nu}$ (in the case $\nu<\infty$)
are ergodic with the ergodic distributions given by the vectors \cite{KlPe04}
$$
\check{\varsigma}=\sum_\zs{i\ge 1}\prod^{i-1}_\zs{j=1}\check{A}_\zs{j}\,w_\zs{i},
\quad\quad \varsigma_\zs{\theta}=\sum_\zs{i\ge 1}\prod^{i-1}_\zs{j=1}A_\zs{j}\,w_\zs{i} ,
$$
i.e.,  the corresponding invariant measures $\check{\lambda}$ and $\lambda^{\theta}$ on $\bbr^{p}$ are 
defined as $\check{\lambda}(A)=\Pb(\check{\varsigma}\in \Gamma)$ and $\lambda^{\theta}(A)=\Pb(\varsigma_\zs{\theta}\in \Gamma)$
for  any $\Gamma\in\Bc(\bbr^{p})$. According to \citep{FeiginTweedieJTSA85} we define the  Lyapunov function as 
\begin{equation}
 \label{LpF_V__}
 \V^{\theta}(x)=\upsilon_\zs{*}(1+x^\top T(\theta)x)\,, \quad T(\theta)=\left(I_\zs{p^{4}}-\theta^\top\otimes\theta^\top- Q^\top_\zs{1}\right)^{-1} {\rm vec}(\check{I}_\zs{p}) \,,
 \end{equation}
where the symbol $\top$ in $(\cdot)^\top$ denotes transpose, $\upsilon_\zs{*}\ge 1$, $\check{I}_\zs{m}$ is the identity matrix of order $m$, and for the $p\times p$ matrix  $\V=(\textbf{v}_{ij})_{1\le i,j\le p}$ 
the vector ${\rm vec}(\V)$ is
$$
{\rm vec}(\V)=(\textbf{v}_{11},\ldots,\textbf{v}_{p1},\dots,\textbf{v}_{1p},\dots,\textbf{v}_{pp})^\top\in\bbr^{p^2}\,.
$$

As shown in \citep{FeiginTweedieJTSA85}, in this case,  for any $x\in\bbr^{p}$ the quadratic form $x^\top T(\theta)x\ge \vert x\vert^{2}$.  Hence, all eigenvalues of the matrix $T(\theta)$
are greater than one. Let now $\K\subset\Theta$ be some compact set. For some fixed $N_\zs{*}>1$, define the set
 \begin{equation*}
 C=\{x\in\bbr^{p}\,:\, \max_\zs{\theta\in\K}x^\top T(\theta)x\le N_\zs{*}\}\,.
 \end{equation*}
 By direct calculation we obtain 
 \begin{equation*}
 \E^{\theta}\,\left[\V^{\theta}(X_\zs{1}) \vert X_\zs{0}=x\right] =\V^{\theta}(x) \left(1-\frac{\vert x\vert^{2}-\tr T(\theta)Q_\zs{0}}{\V^{\theta}(x)}\right)\,.
 \end{equation*}
Taking into account that the function $T(\theta)$ is continuous, we obtain that for any non-zero vector $x\in\bbr^{p}$ and $\theta\in\K$
$$
1\le \e_\zs{min}\le \frac{x^{\top}T(\theta)x}{\vert x\vert^{2}} \le \e_\zs{max}\,<\infty\,,
$$
where
$$
\e_\zs{min}=\min_\zs{\theta\in\K}\,\inf_\zs{x\in\bbr^{p},x\neq 0}\frac{x^{\top}T(\theta)x}{\vert x\vert^{2}},
\quad\quad \e_\zs{max}=\max_\zs{\theta\in\K}\,\sup_\zs{x\in\bbr^{p},x\neq 0}\frac{x^{\top}T(\theta)x}{\vert x\vert^{2}}\,.
$$ 
It follows that, for $x\in C^{c}$,  $\vert x\vert^{2}>N_\zs{*}/\e_\zs{max}$ and, therefore,
$$
\frac{\vert x\vert^{2}-\tr T(\theta)Q_\zs{0}}{\V^{\theta}(x)}\ge \frac{\vert x\vert^{2}}{1+\e_\zs{max}\vert x\vert^{2}}
-\frac{\e_\zs{max} \tr Q_\zs{0}}{\e_\zs{min}\vert x\vert^{2}}\ge \frac{1}{2\e_\zs{max}}-\frac{\e^{2}_\zs{max} \tr Q_\zs{0}}{\e_\zs{min}\,N_\zs{*}}\,.
$$ 
 Now we choose $N_\zs{*}>1$ sufficiently large to obtain positive term in the right side of the last inequality. So we obtain 
 the drift inequality \eqref{drift_ineq_11} for the Lyapunov function defined in  \eqref{LpF_V__} with  any coefficient $\upsilon^{*}\ge 1$. The function $g(u,y,x)$ can be calculated for any 
 $x,y \in \bbr^{p}$ and $u\in\Theta$ as
\begin{align*}
g(u,y,x) =\frac{\vert G^{-1/2}(x)(y-A_\zs{0}x)\vert^{2}-\vert G^{-1/2}(x)(y-u\,x)\vert^{2}}{2}=y^{\top}G^{-1}(x)(u-A_\zs{0})x+\frac{x^{\top}A^{\top}_\zs{0}G^{-1}(x)\,A_\zs{0}x-x^{\top}u^{\top}\,G^{-1}(x)\,u\,x}{2}\,,
\end{align*}
where $G(x)=\E\, [B_\zs{1}xx^{\top}B^{\top}_\zs{1}]+Q_\zs{0}=Q_\zs{1}\vect(xx^{\top})+Q_\zs{0}$. Taking into account that the matrices $Q_\zs{0}$ and
$Q_\zs{1}$ are positive definite, we obtain that there exists some constant $\c_\zs{*}>0$ for which   
\begin{equation}\label{sec:Ex.6-0n.1}
\sup_\zs{x\in\bbr^{p}}\,\vert G^{-1}(x)\vert\le \,\frac{\c_\zs{*}}{1+\vert x\vert^{2}}\,,
\end{equation}
and we obtain that condition $(\B_\zs{2})$ holds with $\gamma=1$ and
\begin{equation}
\label{def_h_theta_max_+++}
h(y,x)= \c_\zs{*}(2\theta_\zs{max}+\vert y\vert), \quad\quad \theta_\zs{max}=\max_\zs{u\in \Theta}\vert u\vert\,.
\end{equation}
Moreover, note that in this case
$$
\wt{g}(u,x)=\frac{1}{2}\,\vert G^{-1/2}(x)(u-A_\zs{0})x\vert=\frac{1}{2}\,x^{\top}(u-A_\zs{0})^{\top}G^{-1}(x)(u-A_\zs{0})x\,.
$$
The bound \eqref{sec:Ex.6-0n.1} implies that $g^{*}=\sup_\zs{x\in\bbr^{p}}\,\sup_\zs{\theta\in\K}\,\wt{g}(\theta,x)<\infty$.

Now, as in  Example 4 in \citep{PergTarSISP2016} choosing $\V(x)=\upsilon^{*}\,[1+(x^{\top}Tx)^{\delta}]$ with $\upsilon^{*}=1+g^{*}$ and any fixed $0<\delta\le 1$
yields condition ($\C_\zs{1}$).  Moreover, for any $r>0$ and  $\delta r\le 2$
\begin{equation}
 \label{upper_bound_moment_111++}
 \sup_\zs{x\in\bbr}\, \sup_\zs{\theta\in\K} \frac{\sup_\zs{j\ge 1}\E^{\theta}_\zs{x}\vert X_\zs{j}\vert^{\delta r}}{1+\vert x\vert^{\delta r}} <\infty,
 \qquad \sup_\zs{j\ge 1}\check{\E}\,\vert X_\zs{j}\vert^{\delta r} <\infty \,,
 \end{equation}
where $\check{\E}$ denotes the expectation with respect to the distribution $\check{\P}$ when $\nu=\infty$. Inequalities \eqref{upper_bound_moment_111++} imply ($\C_\zs{2}$) with $\q=\delta r$. 
Therefore,  taking into account that $\delta$ can be very close to zero and using Theorem~\ref{Th.sec:Mrk.1} we get that for any $r>0$ and any compact set 
$\K\subset \Theta\setminus \{A_\zs{0}\}$ condition $(\A^{*}_\zs{2}(r))$ holds with $I_\zs{\theta}=\E^{\theta}[\wt{g}(\theta,\varsigma_\zs{\theta})]$.

\end{example}

\begin{example}{(Change in the correlation coefficients of the AR($p$) model).}\label{ssec:ARp}
Consider the problem of detecting the change of the correlation coefficient in the $p$th order AR process,  assuming that for $n\ge 1$
\begin{equation}\label{sec:Ex.7}
X_\zs{n} =a_\zs{1,n}\,X_\zs{n-1}+\ldots+a_\zs{p,n}\,X_\zs{n-p}+w_\zs{n}\,,
\end{equation}
where $a_\zs{i,n}=a_\zs{i}\Ind{n \le \nu}+\theta_\zs{i}\Ind{n > \nu}$ and  $(w_\zs{n})_\zs{n\ge 1}$ are i.i.d.\ Gaussian random variables with $\EV [w_\zs{1}]=0$, $\EV[w^{2}_\zs{1}]=1$.  
In the sequel, we use the notation  $\a=(a_\zs{1},\ldots,a_\zs{p})^{\top}$ and $\thetab=(\theta_\zs{1},\ldots,\theta_\zs{p})^{\top}$.The process \eqref{sec:Ex.7} is not Markov, but the $p$-dimensional process  
\begin{equation}\label{sec:Ex.7-1n}
\Phi_\zs{n}=(X_\zs{n},\ldots,X_\zs{n-p+1})^{\top}\in\bbr^{p}
\end{equation}
is Markov.  Note that for $n > \nu$
\begin{equation*}
\Phi_\zs{n}=A\Phi_\zs{n-1}+\wt{w}_\zs{n}\,,
\end{equation*}
where 
\[
A=A(\thetab)= \begin{pmatrix}
\theta_\zs{1} & \theta_2 & \dots & \theta_\zs{p}\\
1 & 0 &  \dots & 0\\
\vdots & \vdots &  \ddots & \vdots \\
0 & 0 &\dots 1&0 
\end{pmatrix} ,
\quad\quad
\wt{w}_\zs{n}=(w_\zs{n},0,\ldots,0)\in\bbr^{p}\,.
\]
It is clear that
\[
\E[\wt{w}_\zs{n}\,\wt{w}^{^{\top}}_\zs{n}] =B=
\begin{pmatrix}
1 & \dots & 0\\
\vdots & \ddots & \vdots\\
0 & \dots & 0
\end{pmatrix} .
\]

Assume that the vectors $\a$ and $\thetab$ belong to the set 
\begin{equation*}
\Theta=\{u\in \bbr^{p}\,:\, \max_\zs{1\le j\le p}\,\vert\e_\zs{j}(A(u))\vert<1\}\,,
\end{equation*}
where $\e_\zs{j}(A)$ denotes the $j$th eigenvalue for the matrix $A$. Note that, in this case, for any $u$ from some compact set $\K\subset\Theta$
and any $y=(y_\zs{1},\ldots,y_\zs{p})^{\top}\in\bbr^{p}$ and $x=(x_\zs{1},\ldots,x_\zs{p})^{\top}\in\bbr^{p}$ the function 
\begin{equation*}
g(u,y,x)=y_\zs{1}(u-\a)^{\top}x+\frac{(\a^{\top} x)^{2}-(u^{\top}x)^{2}}{2}\,.
\end{equation*}

Obviously, it follows that condition $(\B_\zs{2})$ holds with $\gamma=1$ and
$$
h(y,x)= y^{2}_\zs{1}+(1+2\theta_\zs{max})\vert x\vert^{2}\,,
$$
where $\theta_\zs{max}$ is defined in \eqref{def_h_theta_max_+++}.

For any $\thetab\in\Theta$, the process $(\Phi_\zs{n})_\zs{n>\nu+p}$ is ergodic with the normal stationary distribution
\begin{equation}\label{erg_gaus.00}
\varsigma_\zs{\thetab}=\sum_\zs{n\ge 1}A^{n-1}\,\wt{w}_\zs{n}\sim \Nc(0,\F),
\quad\quad \F=\F(\thetab) =\sum_\zs{n\ge 0}A^{n}\,B\,(A^{\top})^{n}\,.
\end{equation}
Obviously, condition (C$1.1$) does not hold for the process \eqref{sec:Ex.7-1n}.  To fulfill this condition
we replace this process by the embedded homogeneous Markov process $\wt{\Phi}_\zs{\iota,n}=\Phi_\zs{np+\iota}$ for some $0\le \iota\le p-1$.  
This process can be represented as 
\begin{equation}\label{sec:A.7}
\wt{\Phi}_\zs{\iota,n}=A^{p}\wt{\Phi}_\zs{\iota,n-1}+\zeta_\zs{\iota,n}, \quad\quad \zeta_\zs{\iota,n}=\sum^{p-1}_\zs{j=0}\,A^{j}\,\wt{w}_\zs{np+\iota-j}\,.
\end{equation}
Clearly, $\zeta_\zs{\iota,n}$ is Gaussian with the parameters $(0,Q)$, where 
$$
Q=Q(\thetab)=\sum^{p-1}_\zs{j=0}\,A^{j}\, B\,(A^{\top})^{j}\,.
$$
One can check directly that this matrix is positive definite. Moreover, one can check directly that for any $\thetab\in\Theta$ and for any $0\le \iota\le p-1$ the process \eqref{sec:A.7} 
is ergodic with the same ergodic distribution given in \eqref{erg_gaus.00}. 

Now, for any fixed $0<\delta\le 1$ we define the $\bbr^{p}\to\bbr$  function 
\begin{equation}\label{sec:A.8}
\V^{\thetab}(x)= \check{\c}[1+(x^{\top}Tx)^{\delta}], \quad\quad T=T(\thetab)=\sum^{\infty}_\zs{l=0}\,(A^{\top})^{pl}\,A^{pl}\,,
\end{equation}
where  $\check{\c}\ge 1$ will be specified later. Let for any fixed compact set $\K\subset \Theta\setminus \{\a\}$
 $$
 t_\zs{\max}=\max_\zs{u\in\K}\,\vert T(u)\vert, \quad\quad Q_\zs{*}= \max_\zs{u\in\K}\,\vert Q(u)\vert \,.
 $$ 
Obviously, $t_\zs{\max}>1$. Note that,  by the Jensen inequality, for any $0\le \iota<p$
\begin{align*}
\E^{\theta}\,\left[\V^{\thetab}(\wt{\Phi}_\zs{\iota,1})\,|\,\wt{\Phi}_\zs{\iota,0}=x\right]
\le  \check{\c}+\check{\c} \left(x^{\top}(A^{p})^{\top}T\,A^{p}x +\tr TQ\right)^{\delta} \le
\check{\c}+\check{\c} \left(x^{\top}(A^{p})^{\top}T\,A^{p}x t_\zs{\max} Q_\zs{*} \right)^{\delta}\,.
\end{align*}
Also,
$$
x^{\top}Tx\ge \vert x\vert^{2}, \quad\quad \frac{x^{\top}(A^{p})^{\top}TA^{p}x}{x^{\top}Tx}=1-\frac{|x|^{2}}{x^{\top}Tx}\le 1-\frac{1}{t_\zs{max}}=t_\zs{*}<1\,.
$$
So,  taking into account that $(\vert a\vert+\vert b\vert)^{\delta}\le \vert a\vert^{\delta}+\vert b\vert^{\delta}$ for $0<\delta\le 1$,
we obtain 
$$
\E^{\theta}\,\left[\V^{\thetab}(\wt{\Phi}_\zs{\iota,1})\,|\,\wt{\Phi}_\zs{\iota,0}=x\right] \le
\check{\c}+\check{\c} \left[ t^{\delta}_\zs{*}(x^{\top}Tx)^{\delta} +(t_\zs{\max}Q_\zs{*})^{\delta} \right]\,.
$$
Putting
$$
N_\zs{*}=\left(\frac{2(1+t^{\delta}_\zs{\max}Q^{\delta}_\zs{*})}{1-t^{\delta}_\zs{*}}\right)^{1/2\delta}, \quad\quad \rho=(1-t^{\delta}_\zs{*})/2\,,
$$
yields that, for $\vert x\vert\ge N_\zs{*}$, 
$$
\E^{\theta}\,\left[\V^{\thetab}(\wt{\Phi}_\zs{\iota,1})\,|\,\wt{\Phi}_\zs{\iota,0}=x\right]\le (1-\rho)\,\V^{\thetab}(x)\,.
$$
Hence, the Markov process \eqref{sec:A.7} satisfies  the drift inequality \eqref{drift_in_+++} with 
$$
C=\{x\in\bbr^{p}\,:\,\vert x\vert \le N_\zs{*}\}, \quad\quad D=\check{\c} (1+t^{\delta}_\zs{max}\,N^{2\delta}_\zs{*}+t^{\delta}_\zs{max}\,Q^{\delta}_\zs{*})\,.
$$
Next we need the minorizing measure in condition $(\C^{\prime}_1)$ on the Borel $\sigma$-field in $\bbr^{p}$. 
To this end, we define $\check{\nu}(\Gamma) ={\rm mes}(\Gamma\cap C)/{\rm mes}(C)$ for any Borel set $\Gamma$ in $\bbr^{p}$,
where ${\rm mes}(\cdot)$ is the Lebesgue measure in $\bbr^{p}$. Moreover,  note that
$$
\wt{h}(\thetab,x)=1+(\thetab^{\top}x)^{2}+(1+2\theta_\zs{max})\vert x\vert^{2} \le 1+(1+2\theta_\zs{max})\vert x\vert^{2}
$$
and
$$
\wt{g}(\thetab,x)=\frac{1}{2}\,\left[(\thetab-\a_\zs{0})^{\top}x\right]^{2}\le \,\theta^{2}_\zs{max}|x|^{2}\,.
$$
Therefore, choosing in \eqref{sec:A.8} $\check{\c}=1+2\theta_\zs{max} +\theta^{2}_\zs{max}$,
we obtain condition $(\C^{\prime}1.2)$.  Condition ($\C_\zs{2}$) can be checked in the same way as in Example~1
for any $r>0$  for which $0<\q=\delta r\le 2$. Therefore, taking into account that $\delta$ can be very close to zero, Theorem~\ref{Th.sec:Mrk.1'} implies that 
for any $r>0$ and any compact set $\K\subset \Theta\setminus \{\a\}$ condition $(\A^{*}_\zs{2}(r))$ holds with $I_\zs{\thetab}=\E^{\thetab}[\wt{g}(\thetab,\varsigma_\zs{\thetab})]$.
\end{example}

\section{Monte Carlo simulations}\label{sec:MC}

In this section, we provide Monte Carlo (MC) simulations for the AR($1$) model, which is a particular case of Example~\ref{ssec:ARp}  for $p=1$, i.e., $a_{1,n}=\theta_0\Ind{n \le \nu}+\theta \Ind{n > \nu}$
and $a_{2,n}=\cdots=a_{p,n}\equiv 0$ in \eqref{sec:Ex.7}. Let the pre-change value 
$\theta_0=0$ and the post-change value $\theta\in\Theta=\{\theta_1,\dots,\theta_N\}$, $-1<\theta_1 < \theta_2 < \cdots  < \theta_N <1$, $\theta_i\neq 0$, and write
\[
L_n^\theta(X_n, X_{n-1}) = \exp\set{\theta X_n  X_{n-1} - \frac{\theta^2 X_{n-1}^2}{2}}, \quad n \ge 1.
\]
The WSR stopping time is written as
\[
T_a = \inf\set{n \ge 1: \log \brcs{ \sum_{j=1}^N W(\theta_j) R_n(\theta_j)} \ge a},
\]
where the SR statistic $R_n(\theta)$ tuned to $\theta$ satisfies the recursion
\[
R_{n+1}(\theta) = [1+R_n(\theta)] L_{n+1}^\theta(X_{n+1},X_{n}), \quad n \ge 0, \quad R_0(\theta)=0.
\]
Thus, the WSR procedure can be easily implemented.

The information number $I_\theta=\theta^2/[2(1-\theta^2)]$, so the first-order approximation \eqref{ASapproxBayes} yields the following approximate formula for the average delay to detection
$\ADD_{\nu,\theta_j}(T_a)=\EV_{\nu,\theta}(T_a-\nu | T_a>\nu)$:
\begin{equation}\label{ADDWSR}
\ADD_{\nu, \theta} (T_a)\approx  \ADD_{\nu, \theta}^{app} (T_a)=\frac{2 (1-\theta^2) a }{\theta^2} .
\end{equation}
In the MC simulations, we set 
\[
\Theta=\{-0.9, -0.8, -0.7, -0.6, -0.5, -0,4, -0.3, -0.2, -0.1, 0.1, 0.2, 0.3, 0.4, 0.5, 0.6, 0.7, 0.8, 0.9\}
\] 
and uniform prior $W(\theta_j)=1/18$, $j=1,\dots,18$. 

\begin{table}[h!]
\begin{center}
\caption{Operating characteristics of the WSR and SR detection procedures. Results of MS simulations with $10^6$ runs for the probability of false alarm $\beta=0.01$ and the change points $\nu=0$ and 
$\nu=10$. The worst change point is $\nu=0$. \label{t:t1}}
\begin{tabular}{ |c||c|c|c||c|c|c||c| } 
 \hline
 \multicolumn{8}{|c|}{$\beta = 0.01, \nu = 0$} \\
 \hline
 $\theta$ & $e^a$ & $\ADD_{\nu,\theta}(T_a)$ & $\LCPFA(T_a)$ & $B$ & $\ADD_{\nu,\theta}(T^*_B)$ & $\LCPFA(T_B^*)$ & $ \ADD_{\nu, \theta}^{app} (T_a)$ \\
 \hline
 0.9 & 395 & 11.74 & 0.0080 & 791 & 11.08 & 0.0079 & 2.81 \\
 \hline
 0.8 & 420 & 14.72 & 0.0073 & 791 & 13.72 & 0.0073 & 6.80 \\
 \hline
 0.7 & 440 & 18.97 & 0.0070 & 791 & 17.52 & 0.0071 & 12.67 \\
 \hline
 0.6 & 470 & 25.32 & 0.0065 & 791 & 23.15 & 0.0065 & 21.88 \\
 \hline
 0.5 & 595 & 36.35 & 0.0049 & 791 & 31.84 & 0.0049 & 38.33 \\
 \hline
 0.4 & 1040 & 59.57 & 0.0024 & 791 & 45.88 & 0.0025 & 72.94 \\
 \hline
 \hline
 \multicolumn{8}{|c|}{$\beta = 0.01, \nu = 10$} \\
 \hline
 $\theta$ & $e^a$ & $\ADD_{\nu,\theta}(T_a)$ & $\LCPFA(T_a)$ & $B$ & $\ADD_{\nu,\theta}(T^*_B)$ & $\LCPFA(T_B^*)$ & $ \ADD_{\nu, \theta}^{app} (T_a)$  \\
 \hline
 0.9 & 395 & 10.05 & 0.0080 & 791 & 9.62 & 0.0079 & 2.81 \\
 \hline
 0.8 & 420 & 12.72 & 0.0073 & 791 & 11.98 & 0.0073 & 6.80 \\
 \hline
 0.7 & 440 & 16.59 & 0.0070 & 791 & 15.30 & 0.0071 & 12.67 \\
 \hline
 0.6 & 470 & 22.55 & 0.0065 & 791 & 20.34 & 0.0065 & 21.88 \\
 \hline
 0.5 & 595 & 32.96 & 0.0049 & 791 & 28.01 & 0.0049 & 38.33 \\
 \hline
 0.4 & 1040 & 55.34 & 0.0024 & 791 & 40.83 & 0.0025 & 72.94 \\
 \hline
\end{tabular}
\end{center}
\end{table}

The results are presented in Table~\ref{t:t1} for the upper bound on the maximal local conditional probability of false alarm ($\LCPFA$) $\beta=0.01$ 
and the number of MC runs  $10^6$. In the table, we compare operating characteristics of the WSR rule $T_a$ with that of the  SR rule 
\[
T_B^*(\theta) = \inf\set{n\ge 1: R_n(\theta) \ge B} 
\]
tuned to the true value of the post-change parameter $\theta$,  i.e., assuming that the post-change parameter $\theta$ is known and equals to one of the values shown in the table.  
Thresholds $a$ and $B$ (shown in the table) were selected in such a way that the maximal 
probabilities of false alarm of both rules ($\LCPFA(T_a)$ and $\LCPFA(T^*_B)$) were practically the same.  It is seen that for relatively large values of the post-change parameter, $\theta \ge 0.6$,
the SR rule only slightly outperforms the WSR rule, but for small parameter values (i.e., for close hypotheses) the difference becomes quite substantial.  The worst change point is $\nu=0$, as expected.
Also, the first-order approximation \eqref{ADDWSR} is not too accurate, especially for small and large parameter values.

\section{Remarks}\label{sec:Rem}

1. Despite the fact that the WSR procedure is first-order asymptotically optimal for practically arbitrary distribution $W(\theta)$ that satisfies condition ($\C_W$), for practical purposes its choice 
may be important. In fact, selection of the weight $W$ affects the higher-order asymptotic performance, and therefore, the real performance of the detection procedure.
For example, if the set $\Theta$ is continuous, one has to avoid $W(\theta)$ that concentrates in the vicinity of a specific parameter value $\theta_1$ since in this case the WSR procedure will be nearly 
optimal at and in the vicinity of $\theta_1$ but will not have a good performance for other parameter values. The choice of $W(\theta)$ is also related to the computational issue. 
It is reasonable to select the weight as to be in the class of conjugate priors, if possible, or to select a uniform prior if $\Theta$ is compact. A substantial simplification occurs when 
$\Theta=\{\theta_1,\dots, \theta_N\}$ is a finite discrete set.  If the observations are i.i.d., then in the discrete case, it is possible to find an optimal (in a certain sense) weight using the approach proposed by 
Fellouris and Tartakovsky~\cite{FellourisTartakovsky-SS2013} for the hypothesis testing problem.

2.  The traditional constraint on the false alarm risk in minimax change-point detection problems is the lower bound on the average run length to false alarm (ARL2FA) $\EV_\infty [\tau] \ge \gamma \ge 1$. This
measure of false alarms makes sense when the distribution of the stopping time $\tau$ (in our case of $T_a$) is approximately geometric. This is typically the case (at least asymptotically as $a\to\infty$) 
for i.i.d.\ data models \cite{PollakTartakovskyTPA09, Yakir-AS95}. However, apart from the i.i.d.\ case, there is no result on the asymptotic distribution of the stopping time 
$T_a$ (as $a\to\infty$), so for general non-i.i.d.\ models of interest in the present paper this is not necessarily true. Therefore, the usefulness of the ARL2FA is under the question, as discussed in detail in 
\cite{Mei-SQA08,TartakovskyIEEECDC05,Tartakovsky-SQA08a,TNB_book2014}. In fact, in general, large values of the ARL2FA do not guarantee small values of the maximal local PFA 
$\sup_{k \ge 1} \Pb_\infty(\tau < k+m | \tau \ge k)$. But the opposite is always true since the maximal local PFA is a more stringent false alarm measure in 
the sense that if it is small, then the ARL2FA is necessarily large. This argument motivated us to consider the maximal local PFA instead of conventional ARL2FA.

\section*{Acknowledgements}

The work of the first author was  partially supported by  the RSF grant 17-11-01049 (National Research Tomsk State University), the Russian Federal Professorship program (project no. 1.472.2016/1.4)
and by the research project no. 2.3208.2017/4.6 (the  Ministry of Education and Science of the Russian Federation).  The work of the second author  was supported in part by Russian 
Science Foundation grant No.\ 18-19-00452 and by the Russian Federation Ministry of Science and Education Arctic program at the Moscow Institute of Physics and Technology. 

We would like to thank referees for useful comments that have improved the article as well as Editor-in-Chief Dr. Dietrich von Rosen for excellent handling of the manuscript. 
We also thank the student Valentin Spivak of the Moscow Institute of Physics and Technology for helping with MC simulations.





\renewcommand{\thetheorem}{A.\arabic{theorem}}
\setcounter{theorem}{0}

\renewcommand{\theproposition}{A.\arabic{proposition}}
\setcounter{proposition}{0}

\section*{Appendix. Auxiliary non-asymptotic bounds for the concentration inequalities}\label{B}

\subsection*{Correlation inequality}

The following proposition provides the important correlation inequality \citep{GaPe13}.

\begin{proposition}\label{Pr.sec:Bi.1}
 Let $(\Omega,\Fc,(\Fc_\zs{j})_\zs{1\le j\le n},\Pb)$ be a filtered probability space and
$(u_\zs{j}, \Fc_\zs{j})_\zs{1\le j\le n}$ be a sequence of random variables such that $\max_{1\le j\le n}\,\EV\,|u_{j}|^{q}<\infty$
for some $q\ge 2$. Define
\begin{equation*}
\check{b}_\zs{j,n}(q)= \left\{ \EV\,\left[ |u_\zs{j}|\,\sum^{n}_\zs{k=j} |\EV\,(u_\zs{k}|\Fc_\zs{j})|\right]^{q/2}\right\}^{2/q}\,.
\end{equation*}
Then
$$
\EV\, \Big | \sum^{n}_\zs{j=1}\,u_\zs{j} \Big |^{q} \le\,(2q)^{q/2} \left(\sum^{n}_\zs{j=1}\,\check{b}_\zs{j,n}(q)\right)^{q/2}\,.
$$
\end{proposition}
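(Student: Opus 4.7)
The plan is to derive the inequality in three stages: an algebraic reduction, a martingale moment bound with constant $(2q)^{q/2}$, and a final application of Minkowski's inequality in $L^{q/2}$.

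First, I would establish the key algebraic identity for $S_n:=\sum_{j=1}^n u_j$ and the tail sums $V_j:=\sum_{k=j}^n u_k$, namely
\[
S_n^2 \;=\; 2\sum_{j=1}^n u_j V_j - \sum_{j=1}^n u_j^2 \;\le\; 2\sum_{j=1}^n |u_j|\,|V_j|.
\]
Since $|u_j|$ is $\Fc_j$-measurable, for the $q=2$ case one can immediately replace $V_j$ by its conditional expectation $\tilde V_j:=\EV(V_j|\Fc_j)=\sum_{k=j}^n \EV(u_k|\Fc_j)$ inside the expectation, obtaining the bound with constant $2\le (2q)^{q/2}=4$. This will serve both as a warm-up and as the base for the general case.

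Next, for $q>2$ I would pass to the Doob martingale $M_j:=\EV(S_n\mid \Fc_j)$, so that $M_n=S_n$ and the martingale differences take the form
\[
\Delta M_j \;=\; \bigl[u_j - \EV(u_j|\Fc_{j-1})\bigr] + \sum_{k>j}\bigl[\EV(u_k|\Fc_j)-\EV(u_k|\Fc_{j-1})\bigr].
\]
Applying the discrete Burkholder--Davis--Gundy inequality in its classical form $\EV |M_n|^q \le (2q)^{q/2}\EV\langle M\rangle_n^{q/2}$ and then carrying out an Abel-type regrouping on $\langle M\rangle_n=\sum_j(\Delta M_j)^2$, the target is to show that the quadratic variation is dominated (in $L^{q/2}$) by the bilinear form
\[
A_n \;:=\; \sum_{j=1}^n |u_j|\sum_{k=j}^n |\EV(u_k|\Fc_j)|,
\]
i.e.\ $\EV\langle M\rangle_n^{q/2}\le \EV A_n^{q/2}$. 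The idea is that, after expanding $(\Delta M_j)^2$, each cross-product can be controlled by a term of the form $|u_\ell|\cdot|\EV(u_k|\Fc_\ell)|$ through Cauchy--Schwarz and the tower property, and collecting these reproduces $A_n$.

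Once the intermediate bound $\EV|S_n|^q\le (2q)^{q/2}\EV A_n^{q/2}$ is in hand, the final step is routine: since $q/2\ge 1$, Minkowski's inequality in $L^{q/2}$ applied to the sum $A_n=\sum_j a_j$ with $a_j=|u_j|\sum_{k=j}^n|\EV(u_k|\Fc_j)|$ gives
\[
\bigl(\EV A_n^{q/2}\bigr)^{2/q} \;\le\; \sum_{j=1}^n \bigl(\EV a_j^{q/2}\bigr)^{2/q} \;=\; \sum_{j=1}^n \check b_{j,n}(q),
\]
and raising both sides to the $q/2$ power and combining with the previous step yields the announced bound. The main obstacle is step two: producing the $(2q)^{q/2}$-constant bound of $\EV\langle M\rangle_n^{q/2}$ by $\EV A_n^{q/2}$. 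This requires a careful tower/Cauchy--Schwarz bookkeeping on the double sum defining $(\Delta M_j)^2$ so that the only conditional expectations that survive are of the form $\EV(u_k|\Fc_j)$ with $k\ge j$, matching the structure of $\check b_{j,n}(q)$.
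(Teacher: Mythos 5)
Your steps 1 and 3 are sound and match what any proof of this bound must contain: the identity $S_n^2=2\sum_{j=1}^n u_jV_j-\sum_{j=1}^n u_j^2$ is correct (and conditioning term by term is legitimate for $q=2$ because the outer expectation is linear), and the closing application of the triangle inequality in $L^{q/2}$, valid since $q/2\ge 1$, is exactly how $\EV A_n^{q/2}$ is converted into $\bigl(\sum_{j=1}^n\check b_{j,n}(q)\bigr)^{q/2}$. Note also that the paper itself contains no proof of this proposition: it is quoted from Galtchouk and Pergamenshchikov \citep{GaPe13}, so your attempt can only be measured against the standard argument behind that reference.

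The genuine gap is the one you flag yourself: step 2 is a target, not a proof, and the target is false as stated. You want $\EV\bigl[\sum_j(\Delta M_j)^2\bigr]^{q/2}\le \EV A_n^{q/2}$ with constant $1$ for the Doob martingale $M_j=\EV(S_n|\Fc_j)$. Take $u_1=\cdots=u_{n-1}=0$ and $|u_n|\le 1$; then $A_n=u_n^2$, while $\sum_j(\Delta M_j)^2$ is the full square function of the Doob martingale of $u_n$. Realizing $u_n$ as the terminal value of a $\pm\varepsilon$ random walk stopped upon leaving $[-1,1]$, the square function is $\varepsilon^2 T$ with $T$ the exit time, whose $(q/2)$-th moment grows like $\Gamma(q/2+1)c^{q/2}\sim (cq)^{q/2}$, whereas $\EV A_n^{q/2}=\EV|u_n|^q\le 1$: the domination fails by a factor of order $q^{q/2}$. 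The reason is structural — the innovation terms $\EV(u_k|\Fc_j)-\EV(u_k|\Fc_{j-1})$, $k>j$, carry no factor $|u_j|$, and once you are inside the nonlinear power $q/2$ you can no longer trade them for $|u_\ell|\,|\EV(u_k|\Fc_\ell)|$ by tower/Cauchy--Schwarz, which only works under a linear expectation (i.e., for $q=2$). The constant bookkeeping also does not close: the classical discrete inequality for the bracket $\sum_j(\Delta M_j)^2$ is Burkholder's $\EV|M_n|^q\le (q-1)^q\,\EV\bigl[\sum_j(\Delta M_j)^2\bigr]^{q/2}$, and $(q-1)^q\gg(2q)^{q/2}$ already for moderate $q$, so there is no room left even if a correct domination lemma were available. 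The route that actually delivers the constant $(2q)^{q/2}$ avoids BDG altogether: put $A_t:=2\sum_{j\le t}|u_j|\sum_{k=j}^n|\EV(u_k|\Fc_j)|$, which is adapted and increasing; applying your step-1 identity to the tail sum $S_n-S_t$ conditionally, together with the tower property, gives the Lenglart-type domination $\EV[(S_n-S_t)^2\,|\,\Fc_t]\le \EV[A_n-A_t\,|\,\Fc_t]$ for every $t$; a Garsia--Neveu-type lemma then yields $\EV|S_n|^q\le q^{q/2}\,\EV A_n^{q/2}$, the factor $2^{q/2}$ hidden in $A$ produces exactly $(2q)^{q/2}$, and your Minkowski step finishes the proof. (A minor point: what you call $\langle M\rangle_n$ is the quadratic variation $[M]_n$, not the predictable bracket; with the predictable bracket a Rosenthal-type bound would incur an additional jump term with constant $q^q$, which is worse still.)
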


\subsection*{Uniform geometric ergodicity for homogeneous Markov processes}\label{subsec:AMc}

We recall some definitions from  \citep{GaPe14}
for a  homogeneous Markov process $(X_\zs{n})_\zs{n\ge 0}$ defined on a measurable state space
$(\Xc, \Bc(\Xc))$. Denote by $(P^{\theta}(\cdot,\cdot))_\zs{\theta\in\Theta}$ the transition probability family 
of this process, i.e., for any $A\in\Bc(\Xc), x\in\Xc$,
\begin{equation*}\label{subsec:AMc.1-1}
P^{\theta}(x,A)\,=\,\Pb^{\theta}_\zs{x}(X_\zs{1}\in A)=\Pb^{\theta}(X_\zs{1}\in A|X_\zs{0}\,=\,x)\,.
\end{equation*}
The $n-$step transition probability is $P^{n,\theta}(x,A)\,=\,\Pb^{\theta}_\zs{x}(X_\zs{n}\in A)$.

We recall that a measure $\lambda$ on $\Bc(\Xc))$ is called 
{\it invariant} (or {\em stationary} or {\em ergodic}) for this process if,
for any $A\in\Bc(\Xc)$, 
\begin{equation*}
\lambda^{\theta}(A)\,=\,\int_{\Xc}\,P^{\theta}(x,A)\lambda(\d x)\,.
\end{equation*}

\noindent If there exists an invariant positive measure 
$\lambda^{\theta}$ with $\lambda^{\theta}(\Xc)\,=\,1$ then the process is
called {\it positive}.

Assume that the process $(X_\zs{n})_\zs{n\ge 0}$ satisfies the following {\em minorization} condition: \vspace{2mm}

\noindent $(\D_\zs{1})$
{\em
There exist  $\delta>0$, a set $C\in\Bc(\Xc)$ and a probability measure $\varsigma$ on $\Bc(\Xc)$ with
$\varsigma(C)=1$, such that
$$
\inf_\zs{\theta\in\Theta}\,\left(\inf_\zs{x\in C}P^{\theta}(x,A)-\delta\,\varsigma(A)\right)>0
$$
 for any $A\in \Bc(\Xc)$, for which $\varsigma(A)>0$.
}

\noindent 
Obviously, this condition implies that $\eta\,=\,\inf_\zs{\theta\in\Theta}\,\inf_\zs{x \in C}\,P^{\theta}(x,C)-\delta>0$. Now we impose the {\em uniform drift} condition. \vspace{2mm}

\noindent $(\D_\zs{2})$
 {\em
There exist some constants  $0<\rho<1$ and $D\ge 1$ such that for any $\theta\in\Theta$ there exist a $\Xc\to [1,\infty)$ function $\V^\theta$  and a set $C$ from $\Bc(\Xc)$ such that 
$$
 \V^{*}=\sup_\zs{\theta\in\Theta}\sup_\zs{x\in C}|\V^\theta(x)|<\infty
$$
and}
\begin{equation*}
\sup_\zs{\theta\in\Theta}\sup_\zs{x\in \Xc}\left\{\E^{\theta}_\zs{x} \left[\V^\theta(X_\zs{1})\right]\,-\,(1-\rho)\V^\theta(x)\,+\,D\Ind{C}(x)\right\}\le 0\,.
\end{equation*}
In this case, we call $\V^\theta$ the Lyapunov function. We use the following theorem from \citep{GaPe14}.

\begin{theorem} \label{Th.AMc.1}
Let $(X_\zs{n})_\zs{n\ge 0}$ be a homogeneous  Markov process satisfying conditions $(\D_\zs{1})$ and $(\D_\zs{2})$ with the same set 
$C\in\Bc(\Xc)$. Then $(X_\zs{n})_\zs{n\ge 0}$ is a positive uniform geometric ergodic process, i.e.,
\begin{equation*}
\sup_\zs{n\ge 0}\, e^{\kappa^{*} n}\, \sup_\zs{x\in\Xc}\, \sup_\zs{\theta\in\Theta} \sup_\zs{0\le g\le \V^\theta}\frac{1}{\V^\theta(x)}\left |\E^{\theta}_\zs{x}[g(X_\zs{n})] -\lambda^\theta(\wt{g})\right | \le R^{*}
\end{equation*}
for some positive constants $\kappa^{*}$ and  $R^{*}$ which are given in {\rm \citep{GaPe14}}.
\end{theorem}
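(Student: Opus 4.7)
The plan is to deduce uniform geometric ergodicity from the minorization condition $(\D_1)$ and the uniform drift condition $(\D_2)$ by combining a coupling construction on the small set $C$ with exponential moment bounds on return times to $C$, all carried out uniformly in $\theta\in\Theta$. The final estimate will follow the classical Meyn--Tweedie scheme, with particular care taken so that the constants $\kappa^*$ and $R^*$ do not depend on $\theta$ or $x$.

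First I would establish existence and uniqueness of an invariant probability measure $\lambda^\theta$ for each $\theta\in\Theta$, together with $\lambda^\theta(V^\theta)<\infty$. The drift condition $(\D_2)$ yields, by a standard iteration (take expectations of the drift inequality and sum), that $\sup_{\theta\in\Theta}\sup_{n\ge 0}\E^\theta_x[V^\theta(X_n)]\le V^\theta(x)+D/\rho$, which bounds the return time $\tau_C=\inf\{n\ge 1: X_n\in C\}$ in the sense that $\E^\theta_x\,r^{\tau_C}\le c_0 V^\theta(x)$ for some $r>1$ and $c_0<\infty$ independent of $\theta$; this is the uniform geometric drift step. Existence and uniqueness of $\lambda^\theta$ then follow from the minorization on $C$, which makes $C$ a uniformly accessible small set.

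Next, I would implement the Nummelin splitting (or equivalently an explicit coupling): starting from any two initial points $x,y\in\Xc$, run two copies of the chain simultaneously under a common source of randomness. On each visit to $C\times C$, with probability at least $\delta$ (uniform in $\theta$ by $(\D_1)$) both chains jump according to $\varsigma$ and become coupled thereafter; with the complementary probability they continue independently. Let $T$ be the coupling time. Combining the uniform exponential return-time bound with the uniform lower bound $\delta$ on the per-visit coupling probability yields $\sup_{\theta\in\Theta}\E^\theta_{x,y}\,e^{\kappa^* T}\le c_1\bl(V^\theta(x)+V^\theta(y)\br)$ for some $\kappa^*>0$ and constant $c_1$ depending only on $\delta,\rho,D,V^*,\eta$.

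Finally, the coupling inequality $|\E^\theta_x g(X_n)-\E^\theta_y g(X_n)|\le 2\|g\|_{V^\theta}\,\E^\theta_{x,y}\bl[V^\theta(X_n)\Ind{T>n}\br]$ for any $0\le g\le V^\theta$, together with Cauchy--Schwarz and the exponential moment of $T$, gives an $e^{-\kappa^* n}V^\theta(x)$ bound after integrating $y$ against $\lambda^\theta$; the integral $\int V^\theta\,\mrm d\lambda^\theta$ is controlled uniformly via $(\D_2)$. This yields the stated uniform geometric ergodicity with constants independent of $\theta$ and $x$. The main obstacle I anticipate is the uniformity in $\theta$ of all constants: both the exponential return-time moment and the Lyapunov norm of $\lambda^\theta$ must be shown to depend only on the common parameters $\rho,D,V^*,\delta,\eta$ appearing in $(\D_1)$--$(\D_2)$, which requires being careful that every intermediate estimate uses only these uniform quantities and never a $\theta$-dependent constant.
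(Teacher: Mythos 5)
The paper contains no internal proof to compare against: Theorem~A.1 is imported verbatim, constants $\kappa^*$ and $R^*$ included, from the cited reference [GaPe14], which establishes it by a Nummelin splitting/renewal argument with explicit constants. Your coupling reconstruction is the same family of argument, and you correctly identify uniformity in $\theta$ as the only genuinely delicate point, since $(\D_1)$ and $(\D_2)$ supply $\delta,\eta,\rho,D,\V^{*}$ free of $\theta$. Two remarks on your sketch. First, the strict-inequality form of $(\D_1)$ is just the usual minorization $P^{\theta}(x,A)\ge \delta\,\varsigma(A)$ for $x\in C$, and the quantity $\eta=\inf_{\theta}\inf_{x\in C}P^{\theta}(x,C)-\delta>0$ that the paper extracts from it is precisely what your construction needs but glosses over: after a failed coupling attempt the two copies must \emph{simultaneously} return to $C\times C$, and a uniform exponential moment for that bivariate hitting time does not follow from the marginal return-time bounds alone; one needs a drift for the pair (e.g., with Lyapunov function $\V^{\theta}(x)+\V^{\theta}(y)$, using $\V^{*}<\infty$ on $C$) together with the lower bound $\eta+\delta$ on re-entry into $C$ from $C$, which plays the role of an aperiodicity condition. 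This is standard but it is a step, not a parenthesis.

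Second, one step fails as written: your Cauchy--Schwarz bound on $\E^{\theta}_{x}\bigl[\V^{\theta}(X_n)\Ind{T>n}\bigr]$ would require control of $\E^{\theta}_{x}\bigl[\V^{\theta}(X_n)^{2}\bigr]$, and the drift condition $(\D_2)$ controls only first moments of $\V^{\theta}$; no second-moment bound is available. The standard repair avoids Cauchy--Schwarz entirely: condition at time $m=\lfloor n/2\rfloor$ and use the iterated drift bound $\E^{\theta}\bigl[\V^{\theta}(X_n)\,\vert\,\Fc_{m}\bigr]\le (1-\rho)^{n-m}\,\V^{\theta}(X_{m})+D/\rho$, together with $\{T>n\}\subseteq\{T>m\}$, to get $\E^{\theta}_{x}\bigl[\V^{\theta}(X_n)\Ind{T>n}\bigr]\le (1-\rho)^{n/2}\bigl(\V^{\theta}(x)+D/\rho\bigr)+(D/\rho)\,\Pb(T>n/2)$; both terms decay geometrically and uniformly in $\theta$ by your exponential moment on the coupling time $T$. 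With this substitution, and with $\lambda^{\theta}(\V^{\theta})\le D/\rho$ (integrate the drift inequality against $\lambda^{\theta}$, after a routine truncation) justifying the integration over $y$, your argument goes through and yields the stated uniform geometric ergodicity with constants depending only on $\delta,\eta,\rho,D,\V^{*}$, consistent with the explicit constants of [GaPe14].
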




\section*{References}


\begin{thebibliography}{31}

\expandafter\ifx\csname natexlab\endcsname\relax\def\natexlab#1{#1}\fi
\providecommand{\bibinfo}[2]{#2}
\ifx\xfnm\relax \def\xfnm[#1]{\unskip,\space#1}\fi
\bibitem[{Baron and Tartakovsky(2006)}]{BaronTartakovskySA06}
\bibinfo{author}{M.~Baron}, \bibinfo{author}{A.~G. Tartakovsky},
  \bibinfo{title}{Asymptotic optimality of change-point detection schemes in
  general continuous-time models}, \bibinfo{journal}{Sequential Analysis}
  \bibinfo{volume}{25} (\bibinfo{year}{2006}) \bibinfo{pages}{257--296}.
  \bibinfo{note}{Invited Paper in Memory of Milton Sobel}.
\bibitem[{Basseville and Nikiforov(1993)}]{Basseville&Nikiforov-book93}
\bibinfo{author}{M.~Basseville}, \bibinfo{author}{I.~V. Nikiforov},
  \bibinfo{title}{Detection of Abrupt Changes -- Theory and Application},
  Information and System Sciences Series, \bibinfo{publisher}{Prentice-Hall,
  Inc}, \bibinfo{address}{Englewood Cliffs, NJ, USA}, \bibinfo{year}{1993}.
  \bibinfo{note}{\href{http://www.irisa.fr/sisthem/kniga/}{Online}}.
\bibitem[{Feigin and Tweedie(1985)}]{FeiginTweedieJTSA85}
\bibinfo{author}{P.~Feigin}, \bibinfo{author}{R.~Tweedie},
  \bibinfo{title}{Random coefficient autoregressive processes: A {M}arkov chain
  analysis of stationarity and finiteness of moments},
  \bibinfo{journal}{Journal of Time Series Analysis} \bibinfo{volume}{6}
  (\bibinfo{year}{1985}) \bibinfo{pages}{1--14}.
\bibitem[{Fellouris and Tartakovsky(2013)}]{FellourisTartakovsky-SS2013}
\bibinfo{author}{G.~Fellouris}, \bibinfo{author}{A.~G. Tartakovsky},
  \bibinfo{title}{Almost optimal sequential tests of discrete composite
  hypotheses}, \bibinfo{journal}{Statistica Sinica} \bibinfo{volume}{23}
  (\bibinfo{year}{2013}) \bibinfo{pages}{1717--1741}.
\bibitem[{Fuh(2003)}]{Fuh03}
\bibinfo{author}{C.-D. Fuh}, \bibinfo{title}{{SPRT} and {CUSUM} in hidden
  {M}arkov models}, \bibinfo{journal}{Annals of Statistics}
  \bibinfo{volume}{31} (\bibinfo{year}{2003}) \bibinfo{pages}{942--977}.
\bibitem[{Galthouk and Pergamenshchikov(2013)}]{GaPe13}
\bibinfo{author}{L.~Galthouk}, \bibinfo{author}{S.~Pergamenshchikov},
  \bibinfo{title}{Uniform concentration inequality for ergodic diffusion
  processes observed at discrete times}, \bibinfo{journal}{Stochastic Processes
  and their Applications} \bibinfo{volume}{123} (\bibinfo{year}{2013})
  \bibinfo{pages}{91--109}.
\bibitem[{Galthouk and Pergamenshchikov(2014)}]{GaPe14}
\bibinfo{author}{L.~Galthouk}, \bibinfo{author}{S.~Pergamenshchikov},
  \bibinfo{title}{Geometric ergodicity for classes of homogeneous {M}arkov
  chains}, \bibinfo{journal}{Stochastic Processes and their Applications}
  \bibinfo{volume}{124} (\bibinfo{year}{2014}) \bibinfo{pages}{3362--3391}.
\bibitem[{Girshick and Rubin(1952)}]{girshick-ams52}
\bibinfo{author}{M.~A. Girshick}, \bibinfo{author}{H.~Rubin},
  \bibinfo{title}{{A Bayes approach to a quality control model}},
  \bibinfo{journal}{Annals of Mathematical Statistics} \bibinfo{volume}{23}
  (\bibinfo{year}{1952}) \bibinfo{pages}{114--125}.
\bibitem[{Hawkins and Olwell(1998)}]{hawkins-book98}
\bibinfo{author}{D.~M. Hawkins}, \bibinfo{author}{D.~H. Olwell},
  \bibinfo{title}{Cumulative Sum Charts and Charting for Quality Improvement},
  Series in Statistics for Engineering and Physical Sciences,
  \bibinfo{publisher}{Springer-Verlag}, \bibinfo{address}{USA},
  \bibinfo{year}{1998}.
\bibitem[{Kl\"uppelberg and Pergamenshchikov(2004)}]{KlPe04}
\bibinfo{author}{C.~Kl\"uppelberg}, \bibinfo{author}{S.~Pergamenshchikov},
  \bibinfo{title}{The tail of the stationary distribution of a random
  coefficient {AR}(q) process with applications to an {ARCH}(q) process},
  \bibinfo{journal}{Annals of Applied Probability} \bibinfo{volume}{14}
  (\bibinfo{year}{2004}) \bibinfo{pages}{971--1005}.
\bibitem[{Lai(1998)}]{LaiIEEE98}
\bibinfo{author}{T.~L. Lai}, \bibinfo{title}{Information bounds and quick
  detection of parameter changes in stochastic systems}, \bibinfo{journal}{IEEE
  Transactions on Information Theory} \bibinfo{volume}{44}
  (\bibinfo{year}{1998}) \bibinfo{pages}{2917--2929}.
\bibitem[{Lorden(1971)}]{lorden-ams71}
\bibinfo{author}{G.~Lorden}, \bibinfo{title}{Procedures for reacting to a
  change in distribution}, \bibinfo{journal}{Annals of Mathematical Statistics}
  \bibinfo{volume}{42} (\bibinfo{year}{1971}) \bibinfo{pages}{1897--1908}.
\bibitem[{Mason and Young(2001)}]{masson-book01}
\bibinfo{author}{R.~L. Mason}, \bibinfo{author}{J.~C. Young},
  \bibinfo{title}{Multivariate Statistical Process Control with Industrial
  Application}, \bibinfo{publisher}{SIAM}, \bibinfo{address}{Philadelphia, PA,
  USA}, \bibinfo{year}{2001}.
\bibitem[{Mei(2008)}]{Mei-SQA08}
\bibinfo{author}{Y.~Mei}, \bibinfo{title}{Is average run length to false alarm
  always an informative criterion?}, \bibinfo{journal}{Sequential Analysis}
  \bibinfo{volume}{27} (\bibinfo{year}{2008}) \bibinfo{pages}{354--376}.
\bibitem[{Moustakides(1986)}]{MoustakidesAS86}
\bibinfo{author}{G.~V. Moustakides}, \bibinfo{title}{Optimal stopping times for
  detecting changes in distributions}, \bibinfo{journal}{Annals of Statistics}
  \bibinfo{volume}{14} (\bibinfo{year}{1986}) \bibinfo{pages}{1379--1387}.
\bibitem[{Moustakides et~al.(2009)Moustakides, Polunchenko and
  Tartakovsky}]{MoustPolTarCS09}
\bibinfo{author}{G.~V. Moustakides}, \bibinfo{author}{A.~S. Polunchenko},
  \bibinfo{author}{A.~G. Tartakovsky}, \bibinfo{title}{Numerical comparison of
  {CUSUM and Shiryaev--Roberts} procedures for detecting changes in
  distributions}, \bibinfo{journal}{Communications in Statistics - Theory and
  Methods} \bibinfo{volume}{38} (\bibinfo{year}{2009})
  \bibinfo{pages}{3225--3239}.
\bibitem[{Pergamenchtchikov and Tartakovsky(2018)}]{PergTarSISP2016}
\bibinfo{author}{S.~Pergamenchtchikov}, \bibinfo{author}{A.~G. Tartakovsky},
  \bibinfo{title}{Asymptotically optimal pointwise and minimax quickest
  change-point detection for dependent data}, \bibinfo{journal}{Statistical
  Inference for Stochastic Processes} \bibinfo{volume}{21}
  (\bibinfo{year}{2018}) \bibinfo{pages}{217--259}.
\bibitem[{Pollak and Tartakovsky(2009{\natexlab{a}})}]{PollakTartakovskyTPA09}
\bibinfo{author}{M.~Pollak}, \bibinfo{author}{A.~G. Tartakovsky},
  \bibinfo{title}{Asymptotic exponentiality of the distribution of first exit
  times for a class of {Markov} processes with applications to quickest change
  detection}, \bibinfo{journal}{Theory of Probability and its Applications}
  \bibinfo{volume}{53} (\bibinfo{year}{2009}{\natexlab{a}})
  \bibinfo{pages}{430--442}.
\bibitem[{Pollak and Tartakovsky(2009{\natexlab{b}})}]{PollakTartakovsky-SS09}
\bibinfo{author}{M.~Pollak}, \bibinfo{author}{A.~G. Tartakovsky},
  \bibinfo{title}{Optimality properties of the {Shiryaev--Roberts} procedure},
  \bibinfo{journal}{Statistica Sinica} \bibinfo{volume}{19}
  (\bibinfo{year}{2009}{\natexlab{b}}) \bibinfo{pages}{1729--1739}.
\bibitem[{Polunchenko et~al.(2014)Polunchenko, Sokolov and
  Tartakovsky}]{Polunchenkoetal-SLJAS2013}
\bibinfo{author}{A.~S. Polunchenko}, \bibinfo{author}{G.~Sokolov},
  \bibinfo{author}{A.~G. Tartakovsky}, \bibinfo{title}{Optimal design and
  analysis of the exponentially weighted moving average chart for exponential
  data}, \bibinfo{journal}{Sri Lankan Journal of Applied Statistics, Special
  Issue: Modern Statistical Methodologies in the Cutting Edge of Science}
  \bibinfo{volume}{15} (\bibinfo{year}{2014}) \bibinfo{pages}{57--80}.
\bibitem[{Polunchenko and Tartakovsky(2010)}]{PolunTartakovskyAS10}
\bibinfo{author}{A.~S. Polunchenko}, \bibinfo{author}{A.~G. Tartakovsky},
  \bibinfo{title}{On optimality of the {Shiryaev--Roberts} procedure for
  detecting a change in distribution}, \bibinfo{journal}{Annals of Statistics}
  \bibinfo{volume}{38} (\bibinfo{year}{2010}) \bibinfo{pages}{3445--3457}.
\bibitem[{Shiryaev(1963)}]{ShiryaevTPA63}
\bibinfo{author}{A.~N. Shiryaev}, \bibinfo{title}{On optimum methods in
  quickest detection problems}, \bibinfo{journal}{Theory of Probability and its
  Applications} \bibinfo{volume}{8} (\bibinfo{year}{1963})
  \bibinfo{pages}{22--46}.
\bibitem[{Srivastava and Wu(1993)}]{srivastava-as93}
\bibinfo{author}{M.~S. Srivastava}, \bibinfo{author}{Y.~Wu},
  \bibinfo{title}{{Comparison of EWMA, CUSUM and Shiryayev--Roberts procedures
  for detecting a shift in the mean}}, \bibinfo{journal}{Annals of Statistics}
  \bibinfo{volume}{21} (\bibinfo{year}{1993}) \bibinfo{pages}{645--670}.
\bibitem[{Tartakovsky(2005)}]{TartakovskyIEEECDC05}
\bibinfo{author}{A.~G. Tartakovsky}, \bibinfo{title}{Asymptotic performance of
  a multichart {CUSUM} test under false alarm probability constraint}, in:
  \bibinfo{booktitle}{Proceedings of the 44th IEEE Conference Decision and
  Control and European Control Conference (CDC-ECC'05), Seville, SP},
  \bibinfo{organization}{IEEE}, \bibinfo{publisher}{Omnipress CD-ROM},
  \bibinfo{year}{2005}, pp. \bibinfo{pages}{320--325}.
\bibitem[{Tartakovsky(2008)}]{Tartakovsky-SQA08a}
\bibinfo{author}{A.~G. Tartakovsky}, \bibinfo{title}{Discussion on ``{Is}
  average run length to false alarm always an informative criterion?'' by
  {Yajun Mei}}, \bibinfo{journal}{Sequential Analysis} \bibinfo{volume}{27}
  (\bibinfo{year}{2008}) \bibinfo{pages}{396--405}.
\bibitem[{Tartakovsky(2017)}]{TartakovskyIEEEIT2017}
\bibinfo{author}{A.~G. Tartakovsky}, \bibinfo{title}{On asymptotic optimality
  in sequential changepoint detection: Non-iid case}, \bibinfo{journal}{IEEE
  Transactions on Information Theory} \bibinfo{volume}{63}
  (\bibinfo{year}{2017}) \bibinfo{pages}{3433--3450}.
\bibitem[{Tartakovsky(2019)}]{TartakovskyIEEEIT2018}
\bibinfo{author}{A.~G. Tartakovsky}, \bibinfo{title}{Asymptotic optimality of
  mixture rules for detecting changes in general stochastic models},
  \bibinfo{journal}{IEEE Transactions on Information Theory}
  \bibinfo{volume}{65} (\bibinfo{year}{2019}) \bibinfo{pages}{1413--1429}.
\bibitem[{Tartakovsky et~al.(2014)Tartakovsky, Nikiforov and
  Basseville}]{TNB_book2014}
\bibinfo{author}{A.~G. Tartakovsky}, \bibinfo{author}{I.~V. Nikiforov},
  \bibinfo{author}{M.~Basseville}, \bibinfo{title}{Sequential Analysis:
  Hypothesis Testing and Changepoint Detection}, Monographs on Statistics and
  Applied Probability, \bibinfo{publisher}{Chapman \& Hall/CRC Press},
  \bibinfo{address}{Boca Raton, London, New York}, \bibinfo{year}{2014}.
\bibitem[{Tartakovsky et~al.(2012)Tartakovsky, Pollak and
  Polunchenko}]{tartakovskypolpolunch-tpa11}
\bibinfo{author}{A.~G. Tartakovsky}, \bibinfo{author}{M.~Pollak},
  \bibinfo{author}{A.~S. Polunchenko}, \bibinfo{title}{Third-order asymptotic
  optimality of the generalized {Shiryaev--Roberts} changepoint detection
  procedures}, \bibinfo{journal}{Theory of Probability and its Applications}
  \bibinfo{volume}{56} (\bibinfo{year}{2012}) \bibinfo{pages}{457--484}.
\bibitem[{Tartakovsky and Veeravalli(2005)}]{TartakovskyVeerTVP05}
\bibinfo{author}{A.~G. Tartakovsky}, \bibinfo{author}{V.~V. Veeravalli},
  \bibinfo{title}{General asymptotic {Bayesian} theory of quickest change
  detection}, \bibinfo{journal}{Theory of Probability and its Applications}
  \bibinfo{volume}{49} (\bibinfo{year}{2005}) \bibinfo{pages}{458--497}.
\bibitem[{Yakir(1995)}]{Yakir-AS95}
\bibinfo{author}{B.~Yakir}, \bibinfo{title}{A note on the run length to false
  alarm of a change-point detection policy}, \bibinfo{journal}{Annals of
  Statistics} \bibinfo{volume}{23} (\bibinfo{year}{1995})
  \bibinfo{pages}{272--281}.

\end{thebibliography}


\end{document}